\documentclass[11pt,reqno]{article}%{amsart}
\usepackage{dsfont, amssymb,amsmath,amscd,latexsym, amsthm, extarrows, amsxtra,amsfonts}
\usepackage[all]{xy}
\usepackage[active]{srcltx}
\usepackage[pdfstartview=FitH, bookmarksnumbered=true,bookmarksopen=true, colorlinks=true, pdfborder=001, citecolor=blue, linkcolor=blue,urlcolor=blue]{hyperref}
\usepackage[round,authoryear]{natbib}
\usepackage{graphicx}  
\usepackage{float}  
\usepackage{subfigure}
\usepackage{graphicx}
\usepackage[title]{appendix}
\usepackage{bm}
\usepackage{tikz}
\usepackage{footnote}
\makesavenoteenv{equation}   % 让 equation 环境支持脚注
\usepackage[top=2.8cm, bottom=2cm, left=2.8cm, right=2.8cm]{geometry}
\newtheorem{theorem}{Theorem}[section]

\newtheorem{definition}[theorem]{Definition}

\newtheorem{proposition}[theorem]{Proposition}
\newtheorem{remark}[theorem]{Remark}
\numberwithin{equation}{section}

\begin{document}
\makeatletter
\def\@setauthors{%
\begingroup
\def\thanks{\protect\thanks@warning}%
\trivlist \centering\footnotesize \@topsep30\p@\relax
\advance\@topsep by -\baselineskip
\item\relax
\author@andify\authors
\def\\{\protect\linebreak}%
{\authors}%
\ifx\@empty\contribs \else ,\penalty-3 \space \@setcontribs
\@closetoccontribs \fi
\endtrivlist
\endgroup } \makeatother
 \baselineskip 12pt
%\linenumbers

\title{Equilibria for Time-inconsistent Regular-singular Control Problems}
\date{}
\author{
Yuting Jia\thanks{
\scriptsize Department of Mathematical Sciences, Tsinghua University, Beijing 100084, China. \texttt{jyt22@mails.tsinghua.edu.cn}, corresponding author}
\and
Zongxia Liang\thanks{\scriptsize 
    Department of Mathematical Sciences, and Center for Insurance and Risk Management, School of Economics and Management,  Tsinghua University, Beijing, China. \texttt{liangzongxia@tsinghua.edu.cn}}
 \and
 Xiaodong Luo\thanks{
\scriptsize Department of Mathematical Sciences, Tsinghua University, Beijing 100084, China. \texttt{luoxd21@mails.tsinghua.edu.cn}}}
\maketitle
\begin{abstract}
This paper studies a time-inconsistent mixed regular-singular control problem in continuous time with non-exponential discount. We seek time-consistent equilibria in an intrapersonal game framework and propose a novel definition of regular-singular equilibrium where both regular and singular controls are unified via control laws. We establish a verification theorem to provide a sufficient condition of the equilibrium. Furthermore, we propose a novel concept of mild equilibrium for which perturbations are applied separately to the regular and singular control and give the mild verification theorem. We illustrate the applicability of the theory to the effort and dividend problem and obtain an explicit equilibrium depending on the effort and dividend thresholds under a mixture of exponential discount functions and a pseudo-exponential discount function by solving the extended Hamilton--Jacobi--Bellman (HJB) system. The convexity of the value function is rigorously established, and the equilibrium conditions are verified. In addition, we establish the existence of the effort and dividend thresholds under a mixture of exponential discount functions. Numerical results reveal the impacts of exogenous parameters on the effort and dividend thresholds and the equilibrium value function, along with their economic implications.
\vskip 15 pt \noindent
\textbf{Keywords:} Time-inconsistency; Regular-singular control law; Non-exponential discount; Equilibrium; Mild equilibrium; Effort and dividend problem.
%\vskip 5pt  \noindent
%\textbf{2010 Mathematics Subject Classification:} 91G10,  91G80, 49L20.
 %\vskip 5pt  \noindent
%\textbf{JEL Classifications:} G11, C61.
\end{abstract}
\vskip15pt

\section{Introduction}\label{Introduction}
Time inconsistency, which was first systematically examined by \citet{samuelson1037anote}, is a pervasive feature in many financial and economic decision-making problems. It commonly originates from non-exponential discounting, mean-variance objective, and distorted probabilities, etc. In time-inconsistent problems, Bellman's principle of optimality no longer holds, and consequently, a strategy that is deemed optimal today may no longer appear optimal when reconsidered from a future perspective and conventional dynamic programming techniques are not directly applicable. \citet{strotz1955myopia} proposed both pre-commitment and equilibrium strategies to address the inconsistency. Under the former, the agent formulates a plan that is optimal from today's perspective and commits to it, disregarding any future incentives to deviate ( cf.\citet{BajeuxBesnainou1998}, \citet{li2000optimal}, \citet{richardson1989amin}, \citet{zhou2000continuous}). In the latter approach, the agent's strategies are characterized as the equilibrium outcome of an intrapersonal game, in which the players are the successive temporal selves of the same individual. However, given that insisting on a pre-committed strategy that a future self would find suboptimal is behaviorally untenable, we pursue time-consistent equilibria in this paper.

The intrapersonal game treatment of time inconsistency dates back to \citet{strotz1955myopia}, who treated the successive incarnations of the same decision maker are treated as noncooperative players within a dynamic game and sought Nash subgame-perfect equilibrium points. \citet{Marin2010consum} derived a modified HJB equation to solve an optimal consumption and investment problem with the non-constant discount rate. The precise definition of the equilibrium concept in continuous time was first provided by \citet{ekeland2006being,ekeland2010the} and \citet{ekeland2008invest}. \citet{ekeland2012time} applied the equilibrium framework to the portfolio management problem under hyperbolic discounting. Following their definition, \citet{bjork2010general} further advanced the game-theoretic framework by deriving an extended HJB system, along with a corresponding verification theorem, for a broad class of Markovian time-inconsistent stochastic control problems. \citet{bjork2014mean} extended the equilibrium framework to the mean-variance portfolio optimization problem with state-dependent risk aversion. Around the same time, \citet{hu2012time,hu2017time} developed a rigorous equilibrium theory for time-inconsistent stochastic linear–quadratic control problems. \citet{christensen2018on,christensen2020on}, \citet{ebert2020weighted} extend the framework from stochastic control to time-inconsistent optimal stopping problems. Beyond that, the equilibrium framework has been widely applied across a broad range of time-inconsistent problems, including portfolio selection (e.g., \citet{kryger2010some}, \citet{dong2014time}), reinsurance–investment problems (e.g., \citet{li2015time}, \citet{liang2015time} \citet{lin2016time}), regular dividend problems (e.g., \citet{cao2025equilibrium}, \citet{hu2025equilibrium}, \citet{li2016equilibrium}, \citet{ZHAO20141}).

It should be noted that the aforementioned literature is confined to regular control problems,  whereas research on time‑inconsistent singular control has only begun to emerge in recent years. In contrast to regular control, the singular control is characterized as a nondecreasing c$\acute{a}$dl$\acute{a}$g process. The theoretical foundation of time‑inconsistent singular control was first laid by \citet{liang2024equilibria}, who established an equilibrium framework for singular control with nonexponential discounting, defined the concept of equilibrium singular control law, derived an extended HJB system, and proved the corresponding verification theorem and the existence of classical solutions. This framework was subsequently extended by \citet{bodnariu2025time}, who introduced the notion of mild thresholds and generalized the equilibrium to broader classes of singular controls. In a parallel strand, \citet{dai2024dynamic} proposed an alternative equilibrium definition for a time‑inconsistent mean–variance portfolio selection problem with transaction costs and characterized the associated singular trading strategies. Hereafter, the theory of time‑inconsistent singular control has been further developed and brought to bear on various financial problems: \citet{liang2025stackelberg} studied a singular reinsurance Stackelberg game under the mean–variance criterion, while \citet{cao2026equilibrium} examined the singular dividend problems under the mean-variance criterion. In very recent work, \citet{caoyue2026equilibrium} investigated an equilibrium singular dividend control problem arising from ambiguity aggregation of heterogeneous discount rates, and \citet{dai2026time} studied equilibrium singular control problems with a running minimum process under non-exponential discounting. 

Although the equilibrium theory of time‑inconsistent singular control has advanced considerably in recent years, singular interventions alone cannot adequately address many practical decision problems. In fact, in many real-world settings--including finance, insurance, resource management, and inventory control--decision makers routinely employ both continuous adjustments and singular interventions, which naturally gives rise to mixed regular–singular control problems (see, e.g., \citet{chen2021free}, \citet{jgaard1999controlling},\citet{guan2023dynamic}, \citet{guo2004constrained}, \citet{qiu2023optimal}, \citet{taksar1998optimal}). However, the existing literature  remains largely restricted to time‑consistent settings, and a general equilibrium framework for mixed regular–singular control problems under non‑exponential discounting has yet to be established. Most recently, \citet{liang2026timeconsistent} and \citet{liang2026equilibrium} investigate a class of regular–singular control problems under the mean–variance criterion. Non‑exponential discounting, as one of the major causes of time inconsistency in the literature (cf. \citet{ekeland2008invest}, \citet{harris2001the}), is also worthy of investigation. This paper aims to fill this gap.

 In this paper, we study a time-inconsistent mixed regular-singular control problem in continuous time with non-exponential discount and seek time-consistent equilibria in an intrapersonal game framework. The main contributions of this paper are summarized as follows. 

 First, we establish a closed-loop equilibrium framework for the general class of time-inconsistent regular–singular stochastic control problem in continuous time with non-exponential discount, based on a unified control-law formulation. The drift and volatility coefficients of the controlled state process are jointly influenced by both regular and singular controls. Building upon the frameworks of \citet{bjork2017timeinconsistent} and \citet{liang2024equilibria}, both the regular and singular controls are simultaneously and jointly generated by their corresponding control laws. Within our framework, control laws are equivalently defined as mappings on the augmented state space: the regular control law maps into the regular control space, whereas the singular control law is equivalently a mapping into the binary-valued set of ``waiting'' and ``intervening''. The regular and singular control laws are unified via the Skorohold reflection problems. Since the regular and singular controls jointly affect both the drift and volatility coefficients of the controlled state process, they are intrinsically coupled; our framework therefore perturbs them simultaneously. This approach stands in contrast to \citet{liang2023weak}, in which the regular control and the stopping time are perturbed separately in a mixed control problem of regular control and stopping. Similar to \citet{liang2024equilibria}, the perturbation we introduce is of an open-loop form. We seek time-consistent weak equilibria in an intrapersonal game framework and the corresponding equilibrium definition is presented in Definition \ref{equil}.

 Second, we establish a verification theorem (\ref{verificationthm}) that provides a sufficient condition for an admissible regular-singular control law to constitute an equilibrium. Specifically, under suitable regularity and integrability conditions, any admissible control law derived from a solution to the extended HJB system qualifies as an equilibrium. This theorem thus bridges the gap between the abstract equilibrium definition and its practical implementation.
 
 Furthermore, similar to \citet{liang2023weak}, we propose a novel concept of mild equilibrium (see Definition \ref{mildequil}), in which perturbations are applied separately to the regular and singular controls. This alternative equilibrium notion accommodates scenarios where the decision maker may deviate in only one type of control at a time, rather than both simultaneously. We also establish the corresponding mild verification theorem (Theorem \ref{mildverificationthm}), which provides a sufficient condition for an admissible control law to constitute a mild equilibrium. This additional equilibrium concept enriches our theoretical framework and accommodates a broader spectrum of time-inconsistent decision-making scenarios. Together with the standard equilibrium framework, this mild equilibrium concept offers greater flexibility in characterizing time-consistent strategies across different perturbation structures.

Finally, we illustrate the applicability of our theoretical framework through the effort and dividend problem. The firm controls the effort and dividend strategies to minimize the expected total net cost of running the business until failure. By solving the extended HJB system, we obtain explicit equilibrium characterized by the effort and dividend thresholds under two representative discount structures: a mixture of exponential discount functions and a pseudo-exponential discount function. Note that for the time-consistent counterpart, the convexity of the value function follows directly from its optimality; see \citet[Proposition 1.1]{jgaard1999controlling}. The same conclusion, however, does not apply to the equilibrium value function. For the two discount functions considered in this paper, the convexity of the value function is rigorously established, and the equilibrium conditions are verified. In addition, we establish the existence of the effort and dividend thresholds under the mixture of exponential discount functions. Under mild conditions, the equilibrium policy prescribes exerting the maximal level of effort when the surplus falls below the effort threshold and paying dividends once the surplus exceeds the dividend threshold when the cost for dividend is relatively small; otherwise, when the cost for dividend is relatively large, the equilibrium policy is to pay the dividend once the surplus exceeds the dividend threshold and not to apply effort even if the firm is at the ruin time. The closed-form solutions validate the feasibility of our theoretical framework and provides economically intuitive insights for dynamic corporate finance decisions involving effort allocation and shareholder payouts. Numerical analysis further quantifies the effects of key exogenous parameters on the thresholds and the equilibrium value function under the pseudo-exponential discount function, thereby shedding light on their economic implications.

The remainder of the paper is organized as follows. Section \ref{Model formulation} formulates the model of time-inconsistent regular-singular control problems with non-exponential discount and introduce the equilibrium framework. Section \ref{Verification theorem} establishes a verification theorem. Section \ref{mild} provides the concept of mild equilibrium and gives the mild verification theorem. Section \ref{Application} applies the theory to the effort and dividend problem. Section \ref{Numerical} presents and discusses some numerical results and sensitivity analysis. The last section concludes this paper.

\section{Model formulation}
\label{Model formulation}
Let $\left(\Omega, \mathcal{F},\left\{\mathcal{F}_{t}\right\}_{t\ge 0}, \mathbb{P}\right)$ be a filtered complete probability space satisfying the usual conditions and $B=\left\{B_t\right\}_{t\ge 0}$ be a standard one-dimensional Brownian motion on this space. 

For any fixed $(x,t,y)\in \mathcal{Q}:=[0,+\infty)\times[0,+\infty)\times[0,+\infty)$, the controlled state process $X^{\pi,\xi}$ under the regular control $\pi$ and the singular control $\xi$ follows the general stochastic differential equation (abbr. SDE):
\begin{equation}
\left\{\begin{array}{l}
\mathrm{d}X^{\pi,\xi}_{r}=\mu\left(X^{\pi,\xi}_{r},r,\pi_{r},\xi_{r}\right)\mathrm{d}r+\sigma\left(X^{\pi,\xi}_{r},r,\pi_{r},\xi_{r}\right)\mathrm{d}B_{r}-\mathrm{d}\xi_{r},\ r\in[t,\tau^{x,t,y;\pi,\xi}_{t}]\footnote{In this paper, for any stopping time $\tau$, when $\tau=+\infty$, $[a,\tau]$ represents $[a,+\infty)$ for any $a\in\mathbb{R}$.},\\
X^{\pi,\xi}_{t-}=x,
\end{array}\right.\label{sde-regular}
\end{equation}
where $\pi=\left\{\pi_r\right\}_{r\ge t}$ is the regular control, $\xi=\left\{\xi_r\right\}_{r\ge t}$ is the singular control which is nondecreasing and c$\acute{a}$dl$\acute{a}$g with initial $\xi_{t-}=y$, and $\Delta \xi_r=\xi_r-\xi_{r-}\le X_{r-}^{\pi,\xi}$. $\tau^{x,t,y;\pi,\xi}_{t}:=\inf\left\{r\ge t|X^{\pi,\xi}_{r}= 0\right\}$ is the time of bankruptcy, and the drift function $\mu$  and the volatility function $\sigma$ are exogenously given and deterministic continuous functions on $[0,+\infty)\times[0,+\infty)\times U\times[0,+\infty)$, where compact set $U\subset\mathbb{R}$ is the regular control space.%U Borel-measurable, U compact 为了验证定理过得去，例子也是满足，不过也可以加上扰动过程u右连续（没必要右连左极），但是我们的例子的均衡策略不是右连续的，而且不想加扰动过程u右连续的条件。也可以对于H关于\pi变量的性质加条件。

The optimization problem is to find a regular-singular control $(\pi,\xi)$ that minimizes the objective
\begin{equation}
\begin{aligned}
J(x, t, y ; \pi,\xi):=&\mathbb{E}_{x, t, y}\left[\int_t^{\tau^{x,t,y;\pi,\xi}_{t}} \beta(r-t) H\left(X_r^{\pi,\xi}, r, \pi_r, \xi_r \right) \mathrm{d}r +\int_t^{\tau^{x,t,y;\pi,\xi}_{t}} \beta(r-t) c\left(X_{r-}^{\pi,\xi}, r, \xi_{r-}\right) \mathrm{d} \xi_r\right],
\end{aligned}\nonumber
\end{equation}
where $\mathbb{E}_{x, t, y}$ denotes the expectation conditioning on $X^{\pi,\xi}_{t-}=x$ and $\xi_{t-}=y$, and $\int_t^{\tau^{x,t,y;\pi,\xi}_{t}} \beta(r-t) c\left(X_{r-}^{\pi,\xi}, r, \xi_{r-}\right) \mathrm{d} \xi_r=\int_t^{\tau^{x,t,y;\pi,\xi}_{t}} \beta(r-t) c\left(X_{r}^{\pi,\xi}, r, \xi_{r}\right) \mathrm{d} \xi_r^{c}+\sum\limits_{r\in[t,\tau^{x,t,y;\pi,\xi}_{t}]} \beta(r-t) c\left(X_{r-}^{\pi,\xi}, r, \xi_{r-}\right) \Delta \xi_r$, with $\xi_r^{c}$ and $\Delta \xi_r=\xi_r-\xi_{r-}$ being the continuous and discrete parts of $\xi_r$. $\beta$ is a general discount function. Furthermore, we assume the deterministic functions $H$, $c$ and $\beta$ are exogenously given continuous functions  bounded from below. In addition, we assume that $c\in C^{1,0,1}(\mathcal{Q})$, and $\beta\in C([0,+\infty))$ is non-negative and nonincreasing with $\beta(0)=1$. In addition, the cost rate $c$ should be assumed to stay the same during the jump, i.e., 
\begin{equation}c(x-a,t,y+a)=c(x,t,y),\ \forall (x,t,y)\in \mathcal{Q}, a\geq 0,\label{cxy}\end{equation} or, equivalently, $c_x(x,t,y)=c_y(x,t,y), \forall (x,t,y)\in \mathcal{Q}$.

As the discount function might not be exponential, time-inconsistency is involved. In the sense of closed-loop control, we introduce the definition of regular-singular control law. The regular control process $\pi$ is generated by some regular control law $\Pi=\Pi(\cdot,\cdot,\cdot)$ as a feedback function of the state-time-(singular)control triple as \citet{bjork2017timeinconsistent} and \citet{bjork2021time}, while the singular control process $\xi$ is generated by some singular control law $\Xi$ as a division of the space $\mathcal{Q}$ of state-time-(singular)control triple as \citet{liang2025stackelberg} and \citet{liang2024equilibria}. $\Xi$ is equivalently a mapping from $\mathcal{Q}$ into the binary-set of ``waiting'' and ``intervening''. Hence, the admissible regular-singular control law is defined as follows:
\begin{definition}[Admissible regular-singular control law]\label{def1}
Let $\Pi=\Pi(\cdot,\cdot,\cdot)$ be a $U$-valued Borel-measurable function on $\mathcal{Q}$, and $\Xi=\left(W^{\Xi},P^{\Xi}\right)$ be a Borel-measurable division of $\mathcal{Q}$ with $W^{\Xi}$ denoting the waiting region and $P^{\Xi}=\left(W^{\Xi}\right)^{c}$ denoting the action region (the complement of the waiting region). The regular-singular control law $\left(\Pi,\Xi\right)$ is admissible if the following conditions hold:\\
(a). Given any initial $(x,t,y)\in \mathcal{Q}$, the Skorokhod reflection type SDE
\begin{equation}
\left\{\begin{array}{l}
\!\mathrm{d}X^{\Pi,\Xi}_{r}\!\!=\!\mu\!\left(\!X^{\Pi,\Xi}_{r}\!,r,\!\Pi\!\left(\!X^{\Pi,\Xi}_{r}\!,r,\xi^{\Pi,\Xi}_{r}\right)\!,\xi^{\Pi,\Xi}_{r}\right)\!\mathrm{d}r\!+\!\sigma\!\left(\!X^{\Pi,\Xi}_{r}\!,r,\!\Pi\!\left(\!X^{\Pi,\Xi}_{r}\!,r,\xi^{\Pi,\Xi}_{r}\right)\!,\xi^{\Pi,\Xi}_{r}\right)\!\mathrm{d}B_{r}\!\!-\!\mathrm{d}\xi^{\Pi,\Xi}_{r}\!\!,\ r\!\in\![t,\!\tau^{x,t,y;\Pi,\Xi}_{t}],\\
\!\left(\!X^{\Pi,\Xi}_{r}\!,r,\xi^{\Pi,\Xi}_{r}\right)\!\in\! \overline{W^{\Xi}},\ r\!\in\![t,\!\tau^{x,t,y;\Pi,\Xi}_{t}],\\
\!\xi^{\Pi,\Xi}_{r}\!\!=\!y\!+\!\int_{t}^{r}\!1_{\left\{\left(\!X^{\Pi,\Xi}_{l}\!,l,\xi^{\Pi,\Xi}_{l}\right)\in P^{\Xi}\right\}}\!\mathrm{d}\xi^{\Pi,\Xi}_{l}\!\!,\ r\!\in\![t,\!\tau^{x,t,y;\Pi,\Xi}_{t}],\\
\!X^{\Pi,\Xi}_{t-}=x
\end{array}\right.\nonumber
\end{equation}
has a unique strong solution $\left(X^{x,t,y;\Pi,\Xi},\xi^{x,t,y;\Pi,\Xi}\right):=\left(\left\{X^{\Pi,\Xi}_{r}\right\}_{r\in [t,\tau^{x,t,y;\Pi,\Xi}_{t}]},\left\{\xi^{\Pi,\Xi}_{r}\right\}_{r\in [t,\tau^{x,t,y;\Pi,\Xi}_{t}]}\right)$, where $\tau^{x,t,y;\Pi,\Xi}_{t}:=\inf\left\{r\ge t|X^{x,t,y;\Pi,\Xi}_{r}= 0\right\}$. The process $\pi^{x,t,y;\Pi,\Xi}:=\left\{\Pi\left(X^{\Pi,\Xi}_{r},r,\xi^{\Pi,\Xi}\right)\right\}_{r\in [t,\tau^{x,t,y;\Pi,\Xi}_{t}]}$ and $\xi^{x,t,y;\Pi,\Xi}$ are respectively the regular control process and singular control process simultaneously generated by the pair of control law $(\Pi,\Xi)$ at initial $(x,t,y)\in\mathcal{Q}$.\\
%这个条件也不能保证扰动后存在唯一强解，主要是扰动的区间内强解存在唯一不能保证，所以去掉这个条件(b). $X^{x,t,y;\Pi,\Xi}$ is the unique strong solution for the SDE \eqref{sde-regular} with regular-singular control pair $\left(\pi^{x,t,y;\Pi,\Xi},\xi^{x,t,y;\Pi,\Xi}\right)$.\\
(b). The strong solution $\left(X^{x,t,y;\Pi,\Xi},\xi^{x,t,y;\Pi,\Xi}\right)$ given in (a) satisfies 
\begin{equation}
\begin{aligned}
&\mathbb{E}_{x, t, y}\left[\int_t^{\tau^{x,t,y;\Pi,\Xi}_{t}} \beta(r-t) \left|H\left(X^{x,t,y;\Pi,\Xi}_r, r, \pi^{x,t,y;\Pi,\Xi}_r, \xi^{x,t,y;\Pi,\Xi}_r \right)\right| \mathrm{d} r\right]<+\infty, \\
&\mathbb{E}_{x, t, y}\left[\int_t^{\tau^{x,t,y;\Pi,\Xi}_{t}} \beta(r-t) \left|c\left(X^{x,t,y;\Pi,\Xi}_{r-}, r, \xi^{x,t,y;\Pi,\Xi}_{r-}\right)\right| \mathrm{d} \xi^{x,t,y;\Pi,\Xi}_r\right]<+\infty.
\end{aligned}\nonumber
\end{equation}
\end{definition}

The objective of an admissible regular-singular control law $(\Pi,\Xi)$ is then 
\begin{equation}
\begin{aligned}
J(x,t,y;\Pi,\Xi):=&\mathbb{E}_{x,t,y}\left[\int_t^{\tau^{x,t,y;\Pi,\Xi}_{t}} \beta(r-t) H\left(X^{x,t,y;\Pi,\Xi}_r, r, \pi^{x,t,y;\Pi,\Xi}_r, \xi^{x,t,y;\Pi,\Xi}_r \right) \mathrm{d} r\right. \\
& \left.+\int_t^{\tau^{x,t,y;\Pi,\Xi}_{t}} \beta(r-t) c\left(X^{x,t,y;\Pi,\Xi}_{r-}, r, \xi^{x,t,y;\Pi,\Xi}_{r-}\right) \mathrm{d} \xi^{x,t,y;\Pi,\Xi}_r \right],\ \forall (x,t,y)\in\mathcal{Q}.
\end{aligned}\label{objective}
\end{equation}

Before introducing the equilibrium, we introduce the admissible set of open-loop controls for perturbation:
\begin{definition}[Admissible regular-singular control pair]\label{admissible2}
Fix initial $(x,t,y)\in\mathcal{Q}$. Let $\pi=\{\pi_{r}\}_{r\ge t}$ be a $U$-valued $\{\mathcal{F}_{r}\}_{r\ge t}$-progressively measurable process and $\xi=\{\xi_{r}\}_{r\ge t}$ be a nondecreasing c$\acute{a}$dl$\acute{a}$g $\{\mathcal{F}_{r}\}_{r\ge t}$-adapted process. Then $(\pi,\xi)$ is called an admissible regular-singular control pair at $(x,t,y)$ if the following hold:\\
(a). $\xi_{t-}=y$, $\Delta \xi_r=\xi_r-\xi_{r-}\le X_{r-}^{\pi,\xi}$, and the SDE \eqref{sde-regular} has a unique strong solution $X^{\pi,\xi}=\{X^{\pi,\xi}_{r}\}_{r\in [t,\tau^{x,t,y;\pi,\xi}_{t}]}$.\\
(b). The strong solution $X^{\pi,\xi}$ in (a) and $(\pi,\xi)$ satisfies
\begin{equation}
\begin{aligned}
&\mathbb{E}_{x, t, y}\left[\int_t^{\tau^{x,t,y;\pi,\xi}_{t}} \beta(r-t) \left|H\left(X_r^{\pi,\xi}, r, \pi_r, \xi_r \right)\right| \mathrm{d} r\right]<+\infty,\\
&\mathbb{E}_{x, t, y}\left[\int_t^{\tau^{x,t,y;\pi,\xi}_{t}} \beta(r-t) \left|c\left(X_{r-}^{\pi,\xi}, r, \xi_{r-}\right)\right| \mathrm{d} \xi_r\right]<+\infty.
\end{aligned}\nonumber
\end{equation}
Denote the set of all admissible regular-singular control pairs at $(x,t,y)$ by $\mathcal{D}^{x,t,y}$. 

For any fixed admissible regular-singular control law $\left(\hat\Pi, \hat\Xi\right)$, for any $(\eta,u)\in\mathcal{D}^{x,t,y}$, if there exist constants $\tilde{h}>0$ and $\tilde{M}>0$ such that $\eta$ additionally satisfies
\begin{equation}
\label{growthcond}
\eta_{(t+h)-}-\eta_{t}\le \tilde{M}h,\ \text{a.s.},\ \forall h\in(0,\tilde{h}),
\end{equation}
and the perturbed regular-singular control pair $\left(\pi^{h},\xi^{h}\right)$, $\forall h\in(0,\tilde{h})$, defined by
\begin{equation}
\begin{aligned}
&\pi^{h}_{r}:=\begin{cases}
u_{r}, & r\in[t,\tau^{x,t,y;u,\eta}_{t}\wedge((t+h)-)]\footnote{In this paper, when $\tau^{x,t,y;u,\eta}_{t}<t+h$, $[t,\tau^{x,t,y;u,\eta}_{t}\wedge((t+h)-)]$ refers to $[t,\tau^{x,t,y;u,\eta}_{t}]$, and when $\tau^{x,t,y;u,\eta}_{t}\ge t+h$, $[t,\tau^{x,t,y;u,\eta}_{t}\wedge((t+h)-)]$ refers to $[t,t+h)$.},\\
\pi^{X^{u,\eta}_{\tau^{x,t,y;u,\eta}_{t}\wedge((t+h)-)},\tau^{x,t,y;u,\eta}_{t}\wedge(t+h),\eta_{\tau^{x,t,y;u,\eta}_{t}\wedge((t+h)-)};\hat{\Pi},\hat{\Xi}}_{r}, & r\in(\tau^{x,t,y;u,\eta}_{t}\wedge((t+h)-),\tau^{x,t,y;\pi^h,\xi^h}_{t}]\footnote{Similarly, when $\tau^{x,t,y;u,\eta}_{t}<t+h$, $(\tau^{x,t,y;u,\eta}_{t}\wedge((t+h)-),\tau^{x,t,y;\pi^h,\xi^h}_{t}]$ refers to $(\tau^{x,t,y;u,\eta}_{t},\tau^{x,t,y;\pi^h,\xi^h}_{t}]=(\tau^{x,t,y;u,\eta}_{t},\tau^{x,t,y;u,\eta}_{t}]=\emptyset$, and when $\tau^{x,t,y;u,\eta}_{t}\ge t+h$, $(\tau^{x,t,y;u,\eta}_{t}\wedge((t+h)-),\tau^{x,t,y;\pi^h,\xi^h}_{t}]$ refers to $[t+h,\tau^{x,t,y;\pi^h,\xi^h}_{t}]$.},
\end{cases}\\
&\xi^{h}_{r}:=\begin{cases}
\eta_{r}, & r\in[t,\tau^{x,t,y;u,\eta}_{t}\wedge((t+h)-)],\\
\xi^{X^{u,\eta}_{\tau^{x,t,y;u,\eta}_{t}\wedge((t+h)-)},\tau^{x,t,y;u,\eta}_{t}\wedge(t+h),\eta_{\tau^{x,t,y;u,\eta}_{t}\wedge((t+h)-)};\hat{\Pi},\hat{\Xi}}_{r}, &  r\in(\tau^{x,t,y;u,\eta}_{t}\wedge((t+h)-),\tau^{x,t,y;\pi^h,\xi^h}_{t}],
\end{cases}
\end{aligned}\label{pihxih}
\end{equation}
and the corresponding state process
\begin{equation}
\begin{aligned}
X^{\pi^{h},\xi^{h}}_{r}=\begin{cases}
X^{u,\eta}_{r}, & r\in[t,\tau^{x,t,y;u,\eta}_{t}\wedge((t+h)-)],\\
X^{X^{u,\eta}_{\tau^{x,t,y;u,\eta}_{t}\wedge((t+h)-)},\tau^{x,t,y;u,\eta}_{t}\wedge(t+h),\eta_{\tau^{x,t,y;u,\eta}_{t}\wedge((t+h)-)};\hat{\Pi},\hat{\Xi}}_{r}, &  r\in(\tau^{x,t,y;u,\eta}_{t}\wedge((t+h)-),\tau^{x,t,y;\pi^h,\xi^h}_{t}],
\end{cases}
\end{aligned}\label{xpihxih}
\end{equation}
satisfy integrability condition (b), where $\tau^{x,t,y;\pi^h,\xi^h}_{t}:=\inf\left\{r\ge t|X^{\pi^h,\xi^h}_{r}= 0\right\}$, then we call $\left(\eta,u\right)$ an admissible pair of perturbations at $(x,t,y)$  w.r.t. $\left(\hat\Pi, \hat\Xi\right)$. We denote the set of all admissible pairs of perturbations at $(x,t,y)$  w.r.t. $\left(\hat\Pi, \hat\Xi\right)$ by $\mathcal{\tilde D}^{x,t,y;\hat\Pi,\hat\Xi}$.
\end{definition}

\begin{remark}
The set $\mathcal{D}^{x,t,y}$ can be regarded as the set of admissible open-loop controls. Growth condition similar to \eqref{growthcond} is standard in the literature such as \citet{liang2025stackelberg} on time-inconsistent singular control.
\end{remark}

\begin{remark}\label{wellposed}
$\left(\pi^{h},\xi^{h}\right), \forall h>0$ in \eqref{pihxih} is not necessarily an admissible regular-singular control pair in $\mathcal{D}^{x,t,y}$ since the SDE \eqref{sde-regular} has a strong solution \eqref{xpihxih} with $(\pi,\xi)$ replaced by $\left(\pi^{h},\xi^{h}\right)$, but the strong solution \eqref{xpihxih} is not necessarily unique. However, \eqref{xpihxih} is well defined by condition (a) in Definition \ref{def1} and condition (a) in Definition \ref{admissible2}. Besides, the integrability condition (b) in Definition \ref{admissible2} may fail to hold for \eqref{pihxih} and \eqref{xpihxih}. Therefore, we add the condition (b) in the definition of $\mathcal{\tilde D}^{x,t,y;\hat\Pi,\hat\Xi}$ to make $J\left(x,t,y;\pi^{h},\xi^{h}\right)$ well defined.
\end{remark}

\begin{remark}
We hope the perturbed regular-singular control pair satisfies $\tau^{x,t,y;\pi^h,\eta^h}_{t}=\tau^{x,t,y;u,\eta}_{t}$ when $\tau^{x,t,y;u,\eta}_{t}<t+h$. Therefore, we define the perturbed regular-singular control pair by \eqref{pihxih} rather than
\begin{equation}
\begin{aligned}
\tilde\xi^{h}_{r}:=\begin{cases}
\eta_{r}, & r\in[t,\tau^{x,t,y;u,\eta}_{t}\wedge(t+h)),\\
\xi^{X^{u,\eta}_{(\tau^{x,t,y;u,\eta}_{t}\wedge(t+h))-},\tau^{x,t,y;u,\eta}_{t}\wedge(t+h),\eta_{(\tau^{x,t,y;u,\eta}_{t}\wedge(t+h))-};\hat{\Pi},\hat{\Xi}}_{r}, & r\in[\tau^{x,t,y;u,\eta}_{t}\wedge(t+h),\tau^{x,t,y;\tilde\pi^h,\tilde\xi^h}_{t}],
\end{cases}
\end{aligned}\label{tildepih}
\end{equation}
and $\tilde\pi^{h}$ in a similar way, although \eqref{tildepih} is relatively concise. The reasonable perturbed singular control such that $\xi^h_t=y+x$ may not be included in \eqref{tildepih}. Besides, unlike \citet{cao2026equilibrium}, in the first stage we used $\tau^{x,t,y;u,\eta}_{t}$ instead of the implicit $\tau^{x,t,y;\pi^h,\eta^h}_{t}$. This definition is more explicit and natural. 
\end{remark}

We introduce the equilibrium for the time-inconsistent regular-singular control problem \eqref{objective} in the weak equilibrium sense: 
\begin{definition}[Equilibrium regular-singular control law]
\label{equil}
An admissible regular-singular control law $\left(\hat{\Pi},\hat{\Xi}\right)$ is called an equilibrium for problem \eqref{objective} if for any initial $(x,t,y)\in\mathcal{Q}$, and for any admissible pair of perturbations $(u,\eta)\in\mathcal{\tilde D}^{x,t,y;\hat\Pi,\hat\Xi}$, it holds 
\begin{equation}
\label{equicond}
\liminf\limits_{h\downarrow 0}\frac{J\left(x,t,y;\pi^{h},\xi^{h}\right)-J\left(x,t,y;\hat{\pi},\hat{\xi}\right)}{h}\ge 0,
\end{equation}
where the regular-singular control pair $\left(\hat{\pi},\hat{\xi}\right)$ is given by 
\begin{equation}\label{hatpixi}\hat{\pi}:=\pi^{x,t,y;\hat{\Pi},\hat{\Xi}}, \ \hat{\xi}:=\xi^{x,t,y;\hat{\Pi},\hat{\Xi}},\end{equation}
and the perturbed regular-singular control pair $\left(\pi^{h},\xi^{h}\right)$ is defined in \eqref{pihxih}.

 Corresponding to the equilibrium regular-singular control law $\left(\hat{\Pi},\hat{\Xi}\right)$, we define the equilibrium value function $V$ by $V(x,t,y):=J\left(x,t,y;\hat{\Pi},\hat{\Xi}\right)=J\left(x,t,y;\hat{\pi},\hat{\xi}\right)$.
\end{definition}

\begin{remark}
In the closed-loop sense for tackling time-inconsistency, $\Pi$ is called the control law; see \citet{bjork2017timeinconsistent} and \citet{bjork2021time}, and $\Xi$ is called the singular control law; see \citet{liang2025stackelberg} and \citet{liang2024equilibria}.
\end{remark}

\begin{remark}
The generation of the control process $\pi^{x,t,y;\Pi,\Xi}$ and $\xi^{x,t,y;\Pi,\Xi}$ is simultaneous and the impact from $(\Pi,\Xi)$ is coupled. Therefore, the definition here takes a different form from that in \citet{liang2023weak}, where the equilibrium conditions are listed separately for stopping and regular control.
\end{remark}

\section{Verification Theorem}\label{Verification theorem}
In this section, we establish a verification theorem (Theorem \ref{verificationthm}) to provide a sufficient condition of the equilibrium regular-singular control law.
\begin{theorem}[Verification Theorem]\label{verificationthm}
Given a function $V(x,t,y)$ and a family of functions $\left\{f^{s}(x,t,y)\right\}_{s\in[0,t]}$. Define $f(x,t,y,s):=f^{s}(x,t,y)$. Define the infinitesimal operator
\begin{equation}
\begin{aligned}
&\left(\mathcal{A}^{u}\varphi\right)(x,t,y):=\varphi_{t}(x,t,y)+\mu(x,t,u,y)\varphi_{x}(x,t,y)+\frac{1}{2}\sigma^{2}(x,t,u,y)\varphi_{xx}(x,t,y),\ \forall\varphi:\mathcal{Q}\rightarrow\mathbb{R},\\
&\left(\mathcal{A}^{\Pi}\varphi\right)(x,t,y):=\left(\mathcal{A}^{\Pi(x,t,y)}\varphi\right)(x,t,y),\ \forall\varphi:\mathcal{Q}\rightarrow\mathbb{R}.
\end{aligned}\nonumber
\end{equation} 
Assume the following conditions hold:\\
(1).
\begin{equation}
\begin{aligned}
& V \in C^{2,1,1}\left(\mathcal{Q}\right), \\
& f \in C^{2,1,1,1}\left(\left\{(x,t,y,s)\mid (x,t,y)\in\mathcal{Q}, s\in [0,t]\right\}\right).
\end{aligned}\nonumber
\end{equation}
(2). There exists an admissible regular-singular control law $\left(\hat{\Pi},\hat{\Xi}\right)$ such that the singular control law $\hat{\Xi}$ satisfies
\begin{equation}
\begin{aligned}
&W^{\hat{\Xi}}:=\left\{(x,t,y)\in\mathcal{Q}|c(x,t,y)-V_{x}(x,t,y)+V_{y}(x,t,y)>0\right\},\\
&P^{\hat{\Xi}}:=\left\{(x,t,y)\in\mathcal{Q}|c(x,t,y)-V_{x}(x,t,y)+V_{y}(x,t,y)=0\right\},
\end{aligned}\nonumber
\end{equation}
and the regular control law $\hat{\Pi}$ satisfies
\begin{equation}
\hat{\Pi}(x,t,y)\in\mathop{\mathrm{argmin}}\limits_{u\in U}\left\{\left(\mathcal{A}^{u}V\right)(x,t,y)+H(x,t,u,y)-f_s(x,t,y,t)\right\},\ \forall (x,t,y)\in\mathcal{Q},
\nonumber
\end{equation}
$\hat{\Pi}$ is continuous on $\overline{W^{\hat{\Xi}}}-W^{\hat{\Xi}}$ when $\hat{\Pi}$ is regarded as a function on $\overline{W^{\hat{\Xi}}}$. \\
(3). $V(x,t,y)$ and $f^{s}(x,t,y)$ satisfy

\begin{subequations}
\begin{align}
&\min\left\{\left(\mathcal{A}^{\hat{\Pi}}V\right)(x,t,y)+H\left(x,t,\hat{\Pi}(x,t,y),y\right)-f_s(x,t,y,t),\right.\nonumber\\
&\left.\quad \quad c(x,t,y)-V_{x}(x,t,y)+V_{y}(x,t,y)\right\}=0,\ \forall (x,t,y)\in\mathcal{Q}, \label{v}\\
&V(0,t,y)=0,\ \forall(0,t,y)\in\mathcal{Q},\label{vT}\\
&\left(\mathcal{A}^{\hat{\Pi}}f^{s}\right)(x,t,y)+\beta(t-s)H\left(x,t,\hat\Pi(x,t,y),y\right)=0,\ \forall (x,t,y)\in \overline{W^{\hat{\Xi}}}, s\in[0,t],\label{fw}\\
& \beta(t-s)c(x,t,y)-f^{s}_{x}(x,t,y)+f^{s}_{y}(x,t,y)=0,\ \forall (x,t,y)\in P^{\hat{\Xi}}, s\in[0,t],\label{fp}\\
&f^{s}(0,t,y)=0,\ \forall (0,t,y)\in\mathcal{Q}, s\in[0,t].\label{fT}
\end{align}
\end{subequations}
(4). For functions $\varphi=f^{s},V,(x,t,y)\mapsto f(x,t,y,t)$, for any $(x,t,y)\in\mathcal{Q}$, any  $s\in[0,t]$, and any $n\geq t$, it holds that 
\begin{equation}
\mathbb{E}_{x,t,y}\int_{t}^{\tau_n}\left[\varphi_{x}\left(\hat{X}_{r},r,\hat\xi_{r})\sigma(\hat{X}_{r},r,\hat\pi_r,\hat\xi_{r}\right)\right]^{2}\mathrm{d}r<+\infty,
\label{martingale}\end{equation}
where $\left(\hat{\pi},\hat{\xi}\right):=\left(\pi^{x,t,y;\hat{\Pi},\hat{\Xi}},\xi^{x,t,y;\hat{\Pi},\hat{\Xi}}\right)$ is the regular-singular control process generated by $\left(\hat{\Pi},\hat{\Xi}\right)$ at $(x,t,y)$, $\hat{X}:=X^{x,t,y;\hat{\Pi},\hat{\Xi}}$ is the corresponding state process, and $\tau_{n}:=\tau^{x,t,y;\hat{\Pi},\hat{\Xi}}_{t}\wedge n$.\\
(5). For any $(x,t,y)\in\mathcal{Q}$, any $s\in[0,t]$, it holds that 

\begin{align}
&\lim\limits_{n\rightarrow+\infty}\mathbb{E}_{x,t,y}f^{s}\left(\hat{X}_{\tau_{n}},\tau_{n},\hat{\xi}_{\tau_{n}}\right)=0,\label{finfty1}\\
&\lim\limits_{n\rightarrow+\infty}\mathbb{E}_{x,t,y}\left[V\left(\hat{X}_{\tau_{n}},\tau_{n},\hat{\xi}_{\tau_{n}}\right)-f\left(\hat{X}_{\tau_{n}},\tau_{n},\hat{\xi}_{\tau_{n}},\tau_{n}\right)\right]=0.\label{finfty2}
\end{align}
(6). For any $(x,t,y)\in\mathcal{Q}$, any admissible pair of perturbations $(u,\eta)\in\mathcal{\tilde D}^{x,t,y;\hat\Pi,\hat\Xi}$, there exists a constant $h_0>0$ such that

\begin{align}
&\mathbb{E}_{x,t,y}\int_{t}^{\tau^{x,t,y;u,\eta}_{t}\wedge(t+h_0)}\left[V_{x}\left(X^{u,\eta}_{r},r,\eta_{r})\sigma(X^{u,\eta}_{r},r,u_r,\eta_{r}\right)\right]^{2}\mathrm{d}r<+\infty,\label{martingale2}\\
&\mathbb{E}_{x,t,y}\left[\sup\limits_{\substack{r\in(t,\tau^{x,t,y;u,\eta}_{t}\wedge(t+h_{0}))\\ h\in(0,h_{0})}}\left|f_s\left(X^{u,\eta}_{r},r,\eta_{r},r\right)+(\beta(r-t)-1) H\left(X^{u,\eta}_r, r,u_r, \eta_r \right)\right.\right.\nonumber\\
&\quad\quad\left.\left.-f_s\left(X^{u,\eta}_{\tau^{x,t,y;u,\eta}_{t}\wedge((t+h)-)},\tau^{x,t,y;u,\eta}_{t}\wedge(t+h),\eta_{\tau^{x,t,y;u,\eta}_{t}\wedge((t+h)-)},r\right)\right|\right]<+\infty,\label{integrability}\\
&\mathbb{E}_{x,t,y}\left[\sup\limits_{r\in(t,\tau^{x,t,y;u,\eta}_{t}\wedge(t+h_{0}))}\left|(\beta(r-t)-1)c\left(X^{u,\eta}_{r-},r,\eta_{r-}\right)\right|\right]<+\infty.\label{integrability2}
\end{align}%\eqref{integrability2}中a\in[0,\Delta\eta_r]因为\eqref{cxy}可以去掉,之前的条件中a\in[0,\Delta\eta_r]不能去掉。

Then $\left(\hat{\Pi},\hat{\Xi}\right)$ is an equilibrium regular-singular control law and $V$ is the corresponding equilibrium value function. Moreover, $f$ has the probabilistic interpretation
\begin{equation}
\begin{aligned}
\label{intf}
f(x,t,y,s)=&\mathbb{E}_{x,t,y}\left[\int_t^{\tau^{x,t,y;\hat{\Pi},\hat{\Xi}}_{t}} \beta(r-s) H\left(\hat{X}_r, r, \hat\pi_r, \hat{\xi}_r \right) \mathrm{d} r\right. \\
& \left.+\int_t^{\tau^{x,t,y;\hat{\Pi},\hat{\Xi}}_{t}} \beta(r-s) c\left(\hat{X}_{r-}, r, \hat{\xi}_{r-}\right) \mathrm{d} \hat{\xi}_r \right],\ \forall (x,t,y)\in\mathcal{Q}, s\in[0,t].
\end{aligned}
\end{equation}
\end{theorem}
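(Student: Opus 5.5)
Our plan follows the standard two-stage argument of \citet{bjork2017timeinconsistent} and \citet{liang2024equilibria}. First we identify $f$ and $V$ probabilistically along the candidate law. Then we expand the perturbed objective to first order in $h$ and use the extended HJB system \eqref{v}--\eqref{fT} to sign the leading term.

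\textbf{Step 1: probabilistic interpretation \eqref{intf}.} Fix $(x,t,y)$ and $s\in[0,t]$. Apply the generalized It\^o formula for semimartingales with jumps to $r\mapsto f^s(\hat X_r,r,\hat\xi_r)$ on $[t,\tau_n]$. The continuous part of $\hat\xi$ charges only times with $(\hat X_r,r,\hat\xi_r)\in P^{\hat\Xi}$, where \eqref{fp} replaces $(-f^s_x+f^s_y)\,\mathrm d\hat\xi^c$ by $-\beta(r-s)c\,\mathrm d\hat\xi^c$. For a jump we write $f^s(\hat X_r,r,\hat\xi_r)-f^s(\hat X_{r-},r,\hat\xi_{r-})=\int_0^{\Delta\hat\xi_r}(-f^s_x+f^s_y)(\hat X_{r-}-a,r,\hat\xi_{r-}+a)\,\mathrm da$. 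The segment traversed lies in $P^{\hat\Xi}$ by the Skorokhod construction, so \eqref{fp} together with \eqref{cxy} turns it into $-\beta(r-s)c(\hat X_{r-},r,\hat\xi_{r-})\Delta\hat\xi_r$.

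On the remaining time set the state lies in $\overline{W^{\hat\Xi}}$, and \eqref{fw} kills the $\mathrm dr$ term up to $-\beta(r-s)H$. Condition \eqref{martingale} makes the stochastic integral a true martingale. Letting $n\to\infty$ and using \eqref{finfty1}, \eqref{fT} and admissibility (b) with dominated/monotone convergence gives \eqref{intf}.

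\textbf{Step 1 continued: $V=J(\cdot;\hat\Pi,\hat\Xi)$.} We apply the same computation to $V$. By \eqref{v}, in $W^{\hat\Xi}$ we have $\mathcal A^{\hat\Pi}V+H-f_s(\cdot,t)=0$, and on $P^{\hat\Xi}$ we have $c-V_x+V_y=0$. Hence
\[
V(x,t,y)-f(x,t,y,t)=\mathbb E\Big[\int_t^{\tau_n}\big(f_s(\hat X_r,r,\hat\xi_r,r)-\tfrac{\mathrm d}{\mathrm dr}[\cdots]\big)\,\mathrm dr\Big]+\mathbb E[V-f](\tau_n).
\]
It is cleanest to apply It\^o directly to $g(x,t,y):=V(x,t,y)-f(x,t,y,t)$. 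The total $t$-derivative of $g$ contains $-f_s(\cdot,t)$, which cancels the $-f_s$ in \eqref{v}. Together with $\beta(0)=1$ in \eqref{fw}--\eqref{fp} at $s=t$, this shows $\mathcal A^{\hat\Pi}g=0$ in $\overline{W^{\hat\Xi}}$ and $-g_x+g_y=0$ on $P^{\hat\Xi}$. Condition \eqref{finfty2} with \eqref{vT}, \eqref{fT} then yields $g\equiv0$, so $V=f(\cdot,t)=J(\cdot;\hat\Pi,\hat\Xi)$. The continuity of $\hat\Pi$ on the boundary is used here to evaluate \eqref{v} on $\overline{W^{\hat\Xi}}$ by continuity.

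\textbf{Step 2: perturbation.} Fix $(u,\eta)\in\tilde{\mathcal D}^{x,t,y;\hat\Pi,\hat\Xi}$ and write $\theta_h:=\tau^{x,t,y;u,\eta}_t\wedge(t+h)$. By the flow property of \eqref{pihxih}--\eqref{xpihxih} and \eqref{intf} with $s=t$, we have
\[
J(x,t,y;\pi^h,\xi^h)=\mathbb E\Big[\int_t^{\theta_h}\beta(r-t)H\,\mathrm dr+\int_{[t,\theta_h)}\beta(r-t)c\,\mathrm d\eta_r+f\big(X^{u,\eta}_{\theta_h-},\theta_h,\eta_{\theta_h-},t\big)\Big].
\]
We rewrite $f(\cdot,t)=V-(f(\cdot,\theta_h)-f(\cdot,t))$, and expand the difference as $\int_t^{\theta_h}f_s(X_{\theta_h-},\theta_h,\eta_{\theta_h-},r)\,\mathrm dr$. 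Then we apply It\^o to $V(X^{u,\eta}_r,r,\eta_r)$ on $[t,\theta_h)$, using \eqref{martingale2} for the martingale part. This gives
\[
J(\pi^h,\xi^h)-V(x,t,y)=\mathbb E\int_t^{\theta_h}\big[\mathcal A^{u_r}V+H-f_s(\cdot,r)\big]\mathrm dr+\mathbb E\int_{[t,\theta_h)}(c-V_x+V_y)\,\mathrm d\eta+R_h,
\]
where the $\mathrm d\eta$ term is taken in the jump-integral form and uses \eqref{cxy}. The remainder $R_h$ collects the $(\beta(r-t)-1)H$ and $(\beta(r-t)-1)c$ terms and the difference between $f_s$ evaluated at $(X_r,r,\eta_r,r)$ and at the frozen endpoint.

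By \eqref{v}, the first integrand is $\ge0$ since $\hat\Pi$ is a pointwise argmin and the minimum is $\ge0$ everywhere, and the second integrand is $\ge0$. It remains to show $R_h=o(h)$. The $c$-part is bounded by $\sup_{r\le t+h}|\beta(r-t)-1|\,\mathbb E[\sup|c|\,(\eta_{(t+h)-}-\eta_t)]$. By \eqref{growthcond} this is at most $\tilde Mh\cdot o(1)$, where the $o(1)$ comes from dominated convergence via \eqref{integrability2} and continuity of $\beta$. The $H$- and $f_s$-parts equal $h$ times an average of quantities tending to $0$ a.s. by continuity and right-continuity of paths; they are dominated by \eqref{integrability}, so dominated convergence gives $o(h)$. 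Hence the $\liminf$ in \eqref{equicond} is $\ge0$.

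\textbf{Main obstacle.} We expect the bookkeeping of jumps to be the hardest part. This includes the possible initial jump at $t$ (where $\eta_t-y$ need not be small), the frozen-endpoint convention $\theta_h-$, and the case $\tau^{u,\eta}_t<t+h$. These must be handled so that the jump of $V$ is exactly an integral of $-V_x+V_y$ along the segment and thus controlled by \eqref{v}. We would treat them by writing every jump as $\int_0^{\Delta\eta}$ along the segment and invoking \eqref{cxy} to freeze $c$. The initial jump produces an $O(1)$ nonnegative term, which only helps.
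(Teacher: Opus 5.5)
Your proposal is correct and follows essentially the same route as the paper. That route has three parts: It\^o along the candidate path for $f^s$ using \eqref{fw}, \eqref{fp} and \eqref{cxy}; the identification of $V$ with $(x,t,y)\mapsto f(x,t,y,t)$ via \eqref{finfty2} and the boundary continuity of $\hat\Pi$; and a first-order expansion of the perturbed cost signed by \eqref{v} and the argmin property, with remainders controlled by \eqref{integrability}, \eqref{integrability2} and \eqref{growthcond}. The differences are cosmetic: you apply It\^o to $V-f(\cdot,\cdot,\cdot,t)$ in one go, and you treat the $(\beta-1)c$ remainder two-sidedly by dominated convergence where the paper uses Fatou's lemma for a one-sided bound. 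For that remainder, dominate by $\tilde M$ times the supremum in \eqref{integrability2} rather than factoring out $\sup|\beta-1|$, since the latter would need a bound on $\mathbb{E}\sup|c|$ that is not assumed.
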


\begin{proof}
We prove Theorem \ref{verificationthm} in three steps.

{\bf Step 1:} We show that $f$ has the probabilistic interpretation \eqref{intf}.

Fix any $(x,t,y)\in\mathcal{Q}$, $s\in[0,t]$, and $n\ge t$, applying the It\^{o}--Tanaka--Meyer formula to $f^{s}$, we obtain
\begin{equation}
\begin{aligned}
&f^{s}\left(\hat{X}_{\tau_{n}},\tau_{n},\hat{\xi}_{\tau_{n}}\right)-f^{s}(x,t,y)\\=&\int_{t}^{\tau_{n}}\left(\mathcal{A}^{\hat{\Pi}}f^{s}\right)\left(\hat{X}_{r},r,\hat{\xi}_{r}\right)\mathrm{d}r+\int_{t}^{\tau_{n}}f^{s}_{x}\left(\hat{X}_{r},r,\hat{\xi}_{r}\right)\sigma\left(\hat{X}_{r},r,\hat\pi_r,\hat{\xi}_{r}\right)\mathrm{d}B_{r}\\
&+\int_{t}^{\tau_{n}}\left[f^{s}_{y}\left(\hat{X}_{r-},r,\hat{\xi}_{r-}\right)-f^{s}_{x}\left(\hat{X}_{r-},r,\hat{\xi}_{r-}\right)\right]\mathrm{d}\hat{\xi}_{r}^{c}\\
&+\sum\limits_{r\in[t,\tau_{n}]}\int_{0}^{\Delta\hat{\xi}_{r}}\left[f^{s}_{y}\left(\hat{X}_{r-}-a,r,\hat{\xi}_{r-}+a\right)-f^{s}_{x}\left(\hat{X}_{r-}-a,r,\hat{\xi}_{r-}+a\right)\right]\mathrm{d}a.
\end{aligned}
\label{fsito}\end{equation}
As $\left(\hat{X}_{r},r,\hat{\xi}_{r}\right)\in\overline{W^{\hat{\Xi}}}, \forall r\in[t,\tau^{x,t,y;\hat{\Pi},\hat{\Xi}}_{t}]$, using \eqref{fw}, we have $\left(\mathcal{A}^{\hat{\Pi}}f^{s}\right)\left(\hat{X}_{r},r,\hat{\xi}_{r}\right)=-\beta(r-s)H\left(\hat{X}_{r},r,\hat\pi_r,\hat{\xi}_{r}\right),  \forall r\in[t,\tau_n]$. Furthermore, using \eqref{martingale}, we have 
\begin{equation}
\mathbb{E}_{x,t,y}\left[\int_{t}^{\tau_{n}}f^{s}_{x}\left(\hat{X}_{r},r,\hat{\xi}_{r}\right)\sigma\left(\hat{X}_{r},r,\hat\pi_r,\hat{\xi}_{r}\right)\mathrm{d}B_{r}\right]=0.
\nonumber\end{equation}
Note that for any $r\in[t,\tau^{x,t,y;\hat{\Pi},\hat{\Xi}}_{t}]$ such that $\Delta\hat{\xi}_{r}>0$ and any $a\in[0,\Delta\hat{\xi}_{r}]$, it holds almost surely that $(\hat{X}_{r-}-a,r,\hat{\xi}_{r-}+a)\in P^{\hat{\Xi}}$. Combining with the definition of $P^{\hat{\Xi}}$, using \eqref{cxy} and \eqref{fp}, we have 
\begin{equation}\begin{aligned}
&\int_{t}^{\tau_{n}}\left[f^{s}_{y}\left(\hat{X}_{r-},r,\hat{\xi}_{r-}\right)-f^{s}_{x}\left(\hat{X}_{r-},r,\hat{\xi}_{r-}\right)\right]\mathrm{d}\hat{\xi}_{r}^{c}\\
&+\sum\limits_{r\in[t,\tau_{n}]}\int_{0}^{\Delta\hat{\xi}_{r}}\left[f^{s}_{y}\left(\hat{X}_{r-}-a,r,\hat{\xi}_{r-}+a\right)-f^{s}_{x}\left(\hat{X}_{r-}-a,r,\hat{\xi}_{r-}+a\right)\right]\mathrm{d}a\\=&
-\int_{t}^{\tau_{n}}\beta(r-s)c\left(\hat{X}_{r-},r,\hat{\xi}_{r-}\right)\mathrm{d}\hat{\xi}_{r}.
\end{aligned}\nonumber\end{equation}

Therefore, based on the above analysis, taking expectations $\mathbb{E}_{x,t,y}$ on both sides of \eqref{fsito}, letting $n\uparrow+\infty$, using \eqref{finfty1} and the dominated convergence theorem, we obtain the probabilistic interpretation \eqref{intf}.

{\bf Step 2:} We show that
\begin{equation}
\label{step2eq}
V(x,t,y)=f(x,t,y,t)=J\left(x,t,y;\hat{\Pi},\hat{\Xi}\right),\ \forall(x,t,y)\in\mathcal{Q}.
\end{equation}

Similar to {\bf Step 1}, fix any $(x,t,y)\in\mathcal{Q}$ and $n\ge t$, applying the It\^{o}--Tanaka--Meyer formula to $V$, taking expectations $\mathbb{E}_{x,t,y}$, and using \eqref{cxy} and \eqref{martingale}, we have 
\begin{equation}
\label{intv}
V(x,t,y)=\mathbb{E}_{x,t,y}\left[\int_t^{\tau_{n}} -\left(\mathcal{A}^{\hat\Pi}V\right)\left(\hat{X}_r, r, \hat{\xi}_r \right) \mathrm{d} r+V\left(\hat{X}_{\tau_{n}}, \tau_{n},\hat{\xi}_{\tau_{n}}\right)+\int_t^{\tau_{n}} c\left(\hat{X}_{r-}, r, \hat{\xi}_{r-}\right) \mathrm{d} \hat{\xi}_r \right].
\end{equation}
Again, applying the It\^{o}--Tanaka--Meyer formula to $(x,t,y)\mapsto f(x,t,y,t)$, taking expectations $\mathbb{E}_{x,t,y}$, and using \eqref{cxy}, \eqref{fp} and \eqref{martingale}, we have
\begin{equation}
\label{intft}
f(x,t,y,t)=\mathbb{E}_{x,t,y}\left[\int_t^{\tau_{n}} -\left(\mathcal{A}^{\hat\Pi}f\right)\left(\hat{X}_r, r, \hat{\xi}_r,r \right) \mathrm{d} r+ f\left(\hat{X}_{\tau_{n}}, \tau_{n},\hat{\xi}_{\tau_{n}},\tau_{n}\right)+\int_t^{\tau_{n}} c\left(\hat{X}_{r-}, r, \hat{\xi}_{r-}\right) \mathrm{d} \hat{\xi}_r \right].
\end{equation}
Since $\hat{\Pi}$ is continuous on $\overline{W^{\hat{\Xi}}}-W^{\hat{\Xi}}$  when $\hat{\Pi}$ is regarded as a function on $\overline{W^{\hat{\Xi}}}$, the combination of \eqref{v} and \eqref{fw} yields that
\begin{equation}
\label{vf}
\left(\mathcal{A}^{\hat\Pi}V\right)(x_0,t_0,y_0)=\left(\mathcal{A}^{\hat\Pi}f\right)(x_0,t_0,y_0,t_0),\ \forall (x_0,t_0,y_0)\in \overline{W^{\hat{\Xi}}}.
\end{equation}
Then, based on \eqref{intv}--\eqref{vf}, letting $n\uparrow+\infty$ and using \eqref{finfty2}, we obtain $V(x,t,y)=f(x,t,y,t)$. The second equality in \eqref{step2eq} follows directly from the probabilistic interpretation \eqref{intf}. Hence \eqref{step2eq} holds.

{\bf Step 3:} We prove that $(\hat{\Pi},\hat{\Xi})$ is an equilibrium regular-singular control law.

Fixed any $(x,t,y)\in\mathcal{Q}$. For any admissible regular-singular control pair $(\pi,\xi)\in\mathcal{D}^{x,t,y}$, we define its value function discounted to $s$ by

\begin{equation}\label{fpixi}
\begin{aligned}
f^{\pi,\xi}(x,t,y,s):=&\mathbb{E}_{x,t,y}\left[\int_t^{\tau^{x,t,y;\pi,\xi}_{t}} \beta(r-s) H\left(X^{\pi,\xi}_r, r,\pi_r, \xi_r \right) \mathrm{d} r\right. \\
& \left.+\int_t^{\tau^{x,t,y;\pi,\xi}_{t}} \beta(r-s) c\left(X^{\pi,\xi}_{r-}, r, \xi_{r-}\right) \mathrm{d}\xi_r \right],\ \forall  s\in[0,t].
\end{aligned}
\end{equation}
Clearly, we have $f^{\hat\pi,\hat\xi}(x,t,y,s)=f(x,t,y,s), \forall s\in [0,t]$. 

Fix any admissible pair of perturbations $(u,\eta)\in\mathcal{\tilde D}^{x,t,y;\hat\Pi,\hat\Xi}$, there exist constants $\tilde{h}>0$ and $\tilde{M}>0$ such that $\eta_{(t+h)-}-\eta_{t}\le \tilde{M}h, \text{a.s.}, \forall h\in(0,\tilde{h})$, and the perturbed regular-singular control pair $\left(\pi^{h},\xi^{h}\right)$, $\forall h\in(0,\tilde{h})$ defined in \eqref{pihxih} and the corresponding state process $X^{\pi^{h},\xi^{h}}$ in \eqref{xpihxih} satisfy integrability condition (b) in Definition \ref{admissible2}. Thus, we can define $f^{\pi^{h},\xi^{h}}(x,t,y,s), \forall h\in(0,\tilde{h}), s\in[0,t]$ just as \eqref{fpixi}. 

Moreover, by \eqref{pihxih}, \eqref{xpihxih} and \eqref{fT}, we can define 
\begin{equation}\begin{aligned}
&f^{\pi^{h},\xi^{h}}\left(X^{\pi^{h},\xi^{h}}_{\tau^{x,t,y;u,\eta}_{t}\wedge((t+h)-)},\tau^{x,t,y;u,\eta}_{t}\wedge(t+h),\xi^{h}_{\tau^{x,t,y;u,\eta}_{t}\wedge((t+h)-)},s\right)\\=&f^{\pi^{h},\xi^{h}}\left(X^{u,\eta}_{\tau^{x,t,y;u,\eta}_{t}\wedge((t+h)-)},\tau^{x,t,y;u,\eta}_{t}\wedge(t+h),\eta_{\tau^{x,t,y;u,\eta}_{t}\wedge((t+h)-)},s\right)\\=&f\left(X^{u,\eta}_{\tau^{x,t,y;u,\eta}_{t}\wedge((t+h)-)},\tau^{x,t,y;u,\eta}_{t}\wedge(t+h),\eta_{\tau^{x,t,y;u,\eta}_{t}\wedge((t+h)-)},s\right),\ \forall s\in[0,t], h\in(0,\tilde{h}).
\end{aligned}\nonumber\end{equation}
%Besides, we know that $\tau^{x,t,y;\pi^{h},\xi^{h}}_{t}=\tau^{x,t,y;u,\eta}_{t}$, when $\tau^{x,t,y;u,\eta}_{t}<t+h$, and $1_{\left\{\tau^{x,t,y;\pi^{h},\xi^{h}}_{t}\ge t+h\right\}}=1_{\left\{\tau^{x,t,y;u,\eta}_{t}\ge t+h\right\}}, \forall h\in(0,\tilde{h})$. Therefore, $\tau^{x,t,y;\pi^{h},\xi^{h}}\wedge(t+h)=\tau^{x,t,y;u,\eta}\wedge(t+h), \forall h\in(0,\tilde{h})$.

Hereafter, we assume $h$ is arbitrary in $(0,\tilde{h}\wedge h_0)$, where $h_0>0$ is the constant in condition (6). Observing that $\pi^{h}_{r}=u_{r}, \xi^{h}_{r}=\eta_{r}, X^{\pi^{h},\xi^{h}}_r=X^{u,\eta}_r, \forall r\in[t,\tau^{x,t,y;\pi^{h},\xi^{h}}_{t}\wedge((t+h)-)]$, we have 
\begin{equation}
\begin{aligned}
J\left(x,t,y;\pi^{h},\xi^{h}\right)=&\mathbb{E}_{x,t,y}\left[f^{\pi^{h},\xi^{h}}\left(X^{u,\eta}_{\tau^{x,t,y;u,\eta}_{t}\wedge((t+h)-)},\tau^{x,t,y;\pi^{h},\xi^{h}}_{t}\wedge(t+h),\eta_{\tau^{x,t,y;u,\eta}_{t}\wedge((t+h)-)},t\right)\right.\\
  &\left.+\int_{t}^{\tau^{x,t,y;u,\eta}_{t}\wedge(t+h)}\beta(r-t) H\left(X^{u,\eta}_r, r,u_r, \eta_r \right) \mathrm{d} r \right.\\
  &\left.+\int_{t}^{\tau^{x,t,y;u,\eta}_{t}\wedge(t+h)}\beta(r-t) c\left(X^{u,\eta}_{r-}, r, \eta_{r-}\right) \mathrm{d}\eta^{c}_r\right.\\
  &\left.+\sum\limits_{r\in[t,\tau^{x,t,y;u,\eta}_{t}\wedge((t+h)-)]}\beta(r-t) c\left(X^{u,\eta}_{r-}, r, \eta_{r-}\right) \Delta\eta_r\right].
\end{aligned}\nonumber\end{equation}
Then, using \eqref{step2eq} and the fact that $f^{\pi^{h},\xi^{h}}\left(X^{u,\eta}_{\tau^{x,t,y;u,\eta}_{t}\wedge((t+h)-)},\tau^{x,t,y;u,\eta}_{t}\wedge(t+h),\eta_{\tau^{x,t,y;u,\eta}_{t}\wedge((t+h)-)},t\right)\\=f\left(X^{u,\eta}_{\tau^{x,t,y;u,\eta}_{t}\wedge((t+h)-)},\tau^{x,t,y;u,\eta}_{t}\wedge(t+h),\eta_{\tau^{x,t,y;u,\eta}_{t}\wedge((t+h)-)},t\right)$, we obtain
\begin{equation}
\begin{aligned}
&J\left(x,t,y;\pi^{h},\xi^{h}\right)-J\left(x,t,y;\hat\pi,\hat\xi\right)\\
=&\mathbb{E}_{x,t,y}\left[f\left(X^{u,\eta}_{\tau^{x,t,y;u,\eta}_{t}\wedge((t+h)-)},\tau^{x,t,y;u,\eta}_{t}\wedge(t+h),\eta_{\tau^{x,t,y;u,\eta}_{t}\wedge((t+h)-)},t\right)-V(x,t,y)\right.\\
  &\left.+\int_{t}^{\tau^{x,t,y;u,\eta}_{t}\wedge(t+h)}\beta(r-t) H\left(X^{u,\eta}_r, r,u_r, \eta_r \right) \mathrm{d} r \right.\\
  &\left.+\int_{t}^{\tau^{x,t,y;u,\eta}_{t}\wedge(t+h)}\beta(r-t) c\left(X^{u,\eta}_{r-}, r, \eta_{r-}\right) \mathrm{d}\eta^{c}_r\right.\\
  &\left.+\sum\limits_{r\in[t,\tau^{x,t,y;u,\eta}_{t}\wedge((t+h)-)]}\beta(r-t) c\left(X^{u,\eta}_{r-}, r, \eta_{r-}\right) \Delta\eta_r\right].
\end{aligned}\nonumber\end{equation}

Similar to {\bf Step 1}, applying the It\^{o}--Tanaka--Meyer formula to $V$, taking expectations $\mathbb{E}_{x,t,y}$, and using \eqref{martingale2} and \eqref{step2eq}, we deduce that
\begin{equation}
\begin{aligned}
&\mathbb{E}_{x,t,y}\left[f\left(X^{u,\eta}_{\tau^{x,t,y;u,\eta}_{t}\wedge((t+h)-)},\tau^{x,t,y;u,\eta}_{t}\wedge(t+h),\eta_{\tau^{x,t,y;u,\eta}_{t}\wedge((t+h)-)},t\right)-V(x,t,y)\right]\\
=&\mathbb{E}_{x,t,y}\left[f\left(X^{u,\eta}_{\tau^{x,t,y;u,\eta}_{t}\wedge((t+h)-)},\tau^{x,t,y;u,\eta}_{t}\wedge(t+h),\eta_{\tau^{x,t,y;u,\eta}_{t}\wedge((t+h)-)},t\right)\right.\\ &\left.-f\left(X^{u,\eta}_{\tau^{x,t,y;u,\eta}_{t}\wedge((t+h)-)},\tau^{x,t,y;u,\eta}_{t}\wedge(t+h),\eta_{\tau^{x,t,y;u,\eta}_{t}\wedge((t+h)-)},\tau^{x,t,y;u,\eta}_{t}\wedge(t+h)\right)\right.\\ &\left.+ V\left(X^{u,\eta}_{\tau^{x,t,y;u,\eta}_{t}\wedge((t+h)-)},\tau^{x,t,y;u,\eta}_{t}\wedge(t+h),\eta_{\tau^{x,t,y;u,\eta}_{t}\wedge((t+h)-)}\right)-V(x,t,y)\right]\\
=&\mathbb{E}_{x,t,y}\left[\int_{t}^{\tau^{x,t,y;u,\eta}_{t}\wedge(t+h)}\left[\left(\mathcal{A}^{u}V\right)\left(X^{u,\eta}_{r},r,\eta_{r}\right)\!-\!f_s\left(X^{u,\eta}_{\tau^{x,t,y;u,\eta}_{t}\wedge((t+h)-)},\tau^{x,t,y;u,\eta}_{t}\!\wedge\!(t\!+\!h),\eta_{\tau^{x,t,y;u,\eta}_{t}\wedge((t+h)-)},r\right)\right]\mathrm{d}r\right.\\ &\left.+\int_{t}^{\tau^{x,t,y;u,\eta}_{t}\wedge(t+h)}\left[V_{y}\left(X^{u,\eta}_{r-},r,\eta_{r-}\right)-V_{x}\left(X^{u,\eta}_{r-},r,\eta_{r-}\right)\right]\mathrm{d}\eta_{r}^{c}\right.\\
&\left.+\sum\limits_{r\in[t,\tau^{x,t,y;u,\eta}_{t}\wedge((t+h)-)]}\int_{0}^{\Delta\eta_{r}}\left[V_{y}\left(X^{u,\eta}_{r-}-a,r,\eta_{r-}+a\right)-V_{x}\left(X^{u,\eta}_{r-}-a,r,\eta_{r-}+a\right)\right]\mathrm{d}a\right],
\end{aligned}\nonumber\end{equation}
where $u$ in $\left(\mathcal{A}^{u}V\right)$ refers to the regular perturbation process $u$, i.e.,
\begin{equation}\left(\mathcal{A}^{u}\varphi\right)(x,t,y):=\varphi_{t}(x,t,y)+\mu(x,t,u_t,y)\varphi_{x}(x,t,y)+\frac{1}{2}\sigma^{2}(x,t,u_t,y)\varphi_{xx}(x,t,y),\ \forall\varphi:\mathcal{Q}\rightarrow\mathbb{R}.\nonumber\end{equation}

Therefore, combining with \eqref{cxy}, the definition of $\hat\Pi$, and \eqref{v}, we have
\begin{equation}
\begin{aligned}
&J\left(x,t,y;\pi^{h},\xi^{h}\right)-J\left(x,t,y;\hat\pi,\hat\xi\right)\\=&\mathbb{E}_{x,t,y}\left[\int_{t}^{\tau^{x,t,y;u,\eta}_{t}\wedge(t+h)}\left[\left(\mathcal{A}^{u}V\right)\left(X^{u,\eta}_{r},r,\eta_{r}\right)+\beta(r-t) H\left(X^{u,\eta}_r, r,u_r, \eta_r \right) \right]\mathrm{d} r\right.\\
  &\left.-\int_{t}^{\tau^{x,t,y;u,\eta}_{t}\wedge(t+h)}f_s\left(X^{u,\eta}_{\tau^{x,t,y;u,\eta}_{t}\wedge((t+h)-)},\tau^{x,t,y;u,\eta}_{t}\wedge(t+h),\eta_{\tau^{x,t,y;u,\eta}_{t}\wedge((t+h)-)},r\right)\mathrm{d}r \right.\\
  &\left.+\int_{t}^{\tau^{x,t,y;u,\eta}_{t}\wedge(t+h)}\left[\beta(r-t) c\left(X^{u,\eta}_{r-}, r, \eta_{r-}\right)+\left(V_{y}-V_{x}\right)\left(X^{u,\eta}_{r-},r,\eta_{r-}\right)\right] \mathrm{d}\eta^{c}_r\right.\\
  &\left.+\sum\limits_{r\in[t,\tau^{x,t,y;u,\eta}_{t}\wedge((t+h)-)]}\int_{0}^{\Delta\eta_{r}}\left[\beta(r-t) c\left(X^{u,\eta}_{r-}-a, r, \eta_{r-}+a\right)+\left(V_{y}-V_{x}\right)\left(X^{u,\eta}_{r-}-a,r,\eta_{r-}+a\right)\right]\mathrm{d}a\right]\\
  \ge&\mathbb{E}_{x,t,y}\left[\int_{t}^{\tau^{x,t,y;u,\eta}_{t}\wedge(t+h)}\left[f_s\left(X^{u,\eta}_{r},r,\eta_{r},r\right)+(\beta(r-t)-1) H\left(X^{u,\eta}_r, r,u_r, \eta_r \right) \right]\mathrm{d} r\right.\\
  &\left.-\int_{t}^{\tau^{x,t,y;u,\eta}_{t}\wedge(t+h)}f_s\left(X^{u,\eta}_{\tau^{x,t,y;u,\eta}_{t}\wedge((t+h)-)},\tau^{x,t,y;u,\eta}_{t}\wedge(t+h),\eta_{\tau^{x,t,y;u,\eta}_{t}\wedge((t+h)-)},r\right)\mathrm{d}r \right.\\
  &\left.+\int_{t}^{\tau^{x,t,y;u,\eta}_{t}\wedge(t+h)}(\beta(r-t)-1) c\left(X^{u,\eta}_{r-}, r, \eta_{r-}\right)\mathrm{d}\eta^{c}_r\right.\\
  &\left.+\sum\limits_{r\in[t,\tau^{x,t,y;u,\eta}_{t}\wedge((t+h)-)]}\int_{0}^{\Delta\eta_{r}}(\beta(r-t)-1) c\left(X^{u,\eta}_{r-}-a, r, \eta_{r-}+a\right)\mathrm{d}a\right].
\end{aligned}\label{est0}\end{equation}

Using \eqref{integrability} and the dominated convergence theorem, since the regular control space $U\subset\mathbb{R}$ is a compact set and $u_r\in U$, we have 
\begin{equation}\begin{aligned}
&\lim\limits_{h\downarrow 0}\mathbb{E}_{x,t,y}\left[\frac{1}{h}\left(\int_{t}^{\tau^{x,t,y;u,\eta}_{t}\wedge(t+h)}f_s\left(X^{u,\eta}_{r},r,\eta_{r},r\right)+(\beta(r-t)-1) H\left(X^{u,\eta}_r, r,u_r, \eta_r \right) \right.\right.\\
  &\left.\left.-f_s\left(X^{u,\eta}_{\tau^{x,t,y;u,\eta}_{t}\wedge((t+h)-)},\tau^{x,t,y;u,\eta}_{t}\wedge(t+h),\eta_{\tau^{x,t,y;u,\eta}_{t}\wedge((t+h)-)},r\right)\mathrm{d}r\right)\right]\\
=&\mathbb{E}_{x,t,y}\left[\left[f_s\left(X^{u,\eta}_{t},t,\eta_{t},t\right)+ (\beta(t-t)-1)H\left(X^{u,\eta}_t, t,u_t, \eta_t \right)-f_s\left(X^{u,\eta}_{t},t,\eta_{t},t\right)\right]1_{\left\{\tau^{x,t,y;u,\eta}_{t}>t\right\}} \right]=0.
\end{aligned}\label{est1}\end{equation}
Since $\eta_{(t+h)-}-\eta_{t}\le \tilde{M}h, \text{a.s.}$, we know that $\int_{t}^{t+h}\mathrm{d}\eta^{c}_{r}\le \tilde{M}h, \text{a.s.}$ and $\sum\limits_{r\in(t,t+h)}\Delta\eta_{r}\le \tilde{M}h, \text{a.s.}$. Then, using \eqref{cxy}, \eqref{integrability2} and the Fatou's lemma, we have
\begin{equation}\begin{aligned}
&\liminf\limits_{h\downarrow 0}\mathbb{E}_{x,t,y}\left[\frac{1}{h}\int_{t}^{\tau^{x,t,y;u,\eta}_{t}\wedge(t+h)}\left(\beta(r-t)-1\right) c\left(X^{u,\eta}_{r-}, r, \eta_{r-}\right) \mathrm{d}\eta^{c}_r\right]\\
\geq&\mathbb{E}_{x,t,y}\left[\liminf\limits_{h\downarrow 0}\frac{1}{h}\int_{t}^{\tau^{x,t,y;u,\eta}_{t}\wedge(t+h)}\left(\beta(r-t)-1\right) c\left(X^{u,\eta}_{r}, r, \eta_{r}\right)\mathrm{d}\eta^{c}_r \right]\\
=&\mathbb{E}_{x,t,y}\left[ \liminf\limits_{h\downarrow 0}\frac{1}{h}\int_{t}^{\tau^{x,t,y;u,\eta}_{t}\wedge(t+h)}\left(\beta(t-t)-1\right) c(X^{u,\eta}_t, t, \eta_t)\mathrm{d}\eta^{c}_{r}\right]= 0,
\end{aligned}\label{est2}\end{equation}
and
\begin{equation}\begin{aligned}
&\liminf\limits_{h\downarrow 0}\mathbb{E}_{x,t,y}\left[\frac{1}{h}\sum\limits_{r\in[t,\tau^{x,t,y;u,\eta}_{t}\wedge((t+h)-)]}\int_{0}^{\Delta\eta_{r}}\left(\beta(r-t)-1\right) c\left(X^{u,\eta}_{r-}-a, r, \eta_{r-}+a\right)\mathrm{d}a\right]\\
=&\liminf\limits_{h\downarrow 0}\mathbb{E}_{x,t,y}\left[\frac{1}{h}\sum\limits_{r\in(t,\tau^{x,t,y;u,\eta}_{t}\wedge((t+h)-)]}\left(\beta(r-t)-1\right) c\left(X^{u,\eta}_{r-}, r, \eta_{r-}\right)\Delta\eta_{r}\right]\\
\geq&\mathbb{E}_{x,t,y}\left[\liminf\limits_{h\downarrow 0}\frac{1}{h}\sum\limits_{r\in(t,\tau^{x,t,y;u,\eta}_{t}\wedge((t+h)-)]}\left(\beta(r-t)-1\right) c\left(X^{u,\eta}_{r}, r, \eta_{r}\right)\Delta\eta_{r} \right]\\
=&\mathbb{E}_{x,t,y}\left[\liminf\limits_{h\downarrow 0}\frac{1}{h}\sum\limits_{r\in(t,\tau^{x,t,y;u,\eta}_{t}\wedge((t+h)-)]}\left(\beta(t-t)-1\right)c(X^{u,\eta}_t, t, \eta_t)\Delta\eta_{r}\right]= 0.
\end{aligned}\label{est3}\end{equation}
\eqref{est0}--\eqref{est3} yield that the equilibrium condition \eqref{equicond} holds. Hence, $(\hat{\Pi},\hat{\Xi})$ is an equilibrium  regular-singular control law.
\end{proof}

Generally speaking, the core sufficient condition for the equilibrium is to satisfy the extended HJB system \eqref{v}--\eqref{fT} given in (3) of Theorem \ref{verificationthm}. We will solve the extended HJB system \eqref{v}--\eqref{fT} in the effort and dividend problem in section \ref{Application} to obtain an explicit equilibrium.
%是否要加必要性定理？

\section{Mild equilibrium and mild verification theorem}\label{mild}
Similar to \citet{liang2023weak}, a weaker definition of mild equilibrium only allows perturbations of regular control for fixed singular control law, and perturbations of singular control for fixed regular control law. This alternative equilibrium notion accommodates scenarios where the decision maker may deviate in only one type of control at a time, rather than both simultaneously. The mild equilibrium is defined as follows:
\begin{definition}[Mild equilibrium regular-singular control law]
\label{mildequil}
An admissible regular-singular control law $\left(\hat{\Pi},\hat{\Xi}\right)$ is called a mild equilibrium for problem \eqref{objective} if the following hold:\\
(a). For any initial $(x,t,y)\in\mathcal{Q}$, any $U$-valued $\{\mathcal{F}_{r}\}_{r\ge t}$-progressively measurable regular control perturbation process $u=\{u_{r}\}_{r\ge t}$ for which the Skorokhod reflection type SDE
\begin{equation}
\left\{\begin{array}{l}
\mathrm{d}X^{u,\hat{\Xi}}_{r}=\mu\left(X^{u,\hat{\Xi}}_{r},r,u_{r},\xi^{u,\hat{\Xi}}_{r}\right)\mathrm{d}r+\sigma\left(X^{u,\hat{\Xi}}_{r},r,u_{r},\xi^{u,\hat{\Xi}}_{r}\right)\mathrm{d}B_{r}-\mathrm{d}\xi^{u,\hat{\Xi}}_{r},\ r\in[t,\tau^{x,t,y;u,\hat{\Xi}}_{t}],\\
\left(X^{u,\hat{\Xi}}_{r},r,\xi^{u,\hat{\Xi}}_{r}\right)\in \overline{W^{\hat{\Xi}}},\ r\in[t,\tau^{x,t,y;u,\hat{\Xi}}_{t}],\\
\xi^{u,\hat{\Xi}}_{r}=y+\int_{t}^{r}1_{\left\{\left(X^{u,\hat{\Xi}}_{l},l,\xi^{u,\hat{\Xi}}_{l}\right)\in P^{\hat{\Xi}}\right\}}\mathrm{d}\xi^{u,\hat{\Xi}}_{l},\quad\forall r\in[t,\tau^{x,t,y;u,\hat{\Xi}}_{t}],\\
X^{u,\hat{\Xi}}_{t-}=x,
\end{array}\right.\nonumber
\end{equation}
has a unique strong solution $\left(X^{u,\hat{\Xi}},\xi^{u,\hat{\Xi}}\right)$, where $\tau^{x,t,y;u,\hat{\Xi}}_{t}:=\inf\left\{r\ge t|X^{u,\hat{\Xi}}_{r}= 0\right\}$, and there exists a constant $\tilde{h}>0$ such that $\forall h\in(0,\tilde{h})$, the perturbed regular-singular control pair $\left(\pi^{h},\xi^{h}\right)$ defined by \eqref{pihxih} with $\eta_r:=\xi^{u,\hat{\Xi}}_r$ and the corresponding state process \eqref{xpihxih} satisfy the integrability condition (b) in Definition \ref{admissible2}, then \eqref{equicond} holds, where the candidate regular-singular control pair $\left(\hat{\pi},\hat{\xi}\right)$ is given by \eqref{hatpixi}.\\%\eta_{r}是否需要增长条件\eqref{growthcond}？对于奇异控制率导出的奇异控制，增长条件很可能不满足。只对initial $(x,t,y)\in\mathcal{Q}$ 在W^{\hat{\Xi}}的内部的初始点进行定义？验证定理积分和极限交换还是不太过得去。还是直接假设c\le0？
(b). For any initial $(x,t,y)\in\mathcal{Q}$, any non-decreasing c\`{a}dl\`{a}g $\{\mathcal{F}_{r}\}_{r\ge t}$-adapted singular control perturbation process $\{\eta_{r}\} _{r\ge t}$ which satisfies property \eqref{growthcond}, $\eta_{t-}=y$, $\Delta \eta_r=\eta_r-\eta_{r-}\le X_{r-}^{\hat\Pi,\eta}$, the SDE
\begin{equation}
\left\{\begin{array}{l}
\mathrm{d}X^{\hat{\Pi},\eta}_{r}=\mu\left(X^{\hat{\Pi},\eta}_{r},r,\hat{\Pi}\left(X^{\hat{\Pi},\eta}_{r},r,\eta_{r}\right),\eta_{r}\right)\mathrm{d}r+\sigma\left(X^{\hat{\Pi},\eta}_{r},r,\hat{\Pi}\left(X^{\hat{\Pi},\eta}_{r},r,\eta_{r}\right),\eta_{r}\right)\mathrm{d}B_{r}-\mathrm{d}\eta_{r},\ r\in[t,\tau^{x,t,y;\hat{\Pi},\eta}_{t}],\\
X^{\hat{\Pi},\eta}_{t-}=x,
\end{array}\right.\nonumber
\end{equation}
has a unique strong solution $X^{\hat{\Pi},\eta}$, where $\tau^{x,t,y;\hat{\Pi},\eta}_{t}:=\inf\left\{r\ge t|X^{\hat{\Pi},\eta}_{r}= 0\right\}$, and there exists a constant $\tilde{h}>0$ such that $\forall h\in(0,\tilde{h})$, the perturbed regular-singular control pair $\left(\pi^{h},\xi^{h}\right)$ defined by \eqref{pihxih} with $u_{r}:=\hat{\Pi}(X^{\hat{\Pi},\eta}_{r},r,\eta_{r})$ and the corresponding state process \eqref{xpihxih} satisfy the integrability condition (b) in Definition \ref{admissible2}, then \eqref{equicond} holds.

 Corresponding to the mild equilibrium regular-singular control law $\left(\hat{\Pi},\hat{\Xi}\right)$, we define the mild equilibrium value function $V$ by $V(x,t,y):=J\left(x,t,y;\hat{\Pi},\hat{\Xi}\right)=J\left(x,t,y;\hat{\pi},\hat{\xi}\right)$.
\end{definition}

Similar to Theorem \ref{verificationthm}, we can give the mild verification theorem for the mild equilibrium. For simplicity, we assume the cost function $c$ in the objective is non-positive hereafter in this section. In practice, the singular control usually represents dividends and this assumption is reasonable. 

\begin{theorem}[Mild Verification Theorem]\label{mildverificationthm}
Given a function $V(x,t,y)$ and a family of functions $\left\{f^{s}(x,t,y)\right\}_{s\in[0,t]}$. Define $f(x,t,y,s):=f^{s}(x,t,y)$. Define the infinitesimal operator
\begin{equation}
\begin{aligned}
&\left(\mathcal{A}^{u}\varphi\right)(x,t,y):=\varphi_{t}(x,t,y)+\mu(x,t,u,y)\varphi_{x}(x,t,y)+\frac{1}{2}\sigma^{2}(x,t,u,y)\varphi_{xx}(x,t,y),\ \forall\varphi:\mathcal{Q}\rightarrow\mathbb{R},\\
&\left(\mathcal{A}^{\Pi}\varphi\right)(x,t,y):=\left(\mathcal{A}^{\Pi(x,t,y)}\varphi\right)(x,t,y),\ \forall\varphi:\mathcal{Q}\rightarrow\mathbb{R}.
\end{aligned}\nonumber
\end{equation} 
Assume the following conditions hold:\\
(1).
\begin{equation}
\begin{aligned}
& V \in C^{2,1,1}\left(\mathcal{Q}\right), \\
& f \in C^{2,1,1,1}\left(\left\{(x,t,y,s)\mid (x,t,y)\in\mathcal{Q}, s\in [0,t]\right\}\right).
\end{aligned}\nonumber
\end{equation}
(2). There exists an admissible regular-singular control law $\left(\hat{\Pi},\hat{\Xi}\right)$ such that the singular control law $\hat{\Xi}$ satisfies
\begin{equation}
\begin{aligned}
&W^{\hat{\Xi}}:=\left\{(x,t,y)\in\mathcal{Q}|c(x,t,y)-V_{x}(x,t,y)+V_{y}(x,t,y)>0\right\},\\
&P^{\hat{\Xi}}:=\left\{(x,t,y)\in\mathcal{Q}|c(x,t,y)-V_{x}(x,t,y)+V_{y}(x,t,y)=0\right\},
\end{aligned}\nonumber
\end{equation}
and the regular control law $\hat{\Pi}$ satisfies
\begin{equation}
\hat{\Pi}(x,t,y)\in\mathop{\mathrm{argmin}}\limits_{u\in U}\left\{\left(\mathcal{A}^{u}V\right)(x,t,y)+H(x,t,u,y)-f_s(x,t,y,t)\right\},\ \forall (x,t,y)\in \overline{W^{\hat{\Xi}}},
\label{mildpi}
\end{equation}
$\hat{\Pi}$ is continuous on $\overline{W^{\hat{\Xi}}}-W^{\hat{\Xi}}$ when $\hat{\Pi}$ is regarded as a function on $\overline{W^{\hat{\Xi}}}$. \\
(3). $V(x,t,y)$ and $f^{s}(x,t,y)$ satisfy

\begin{subequations}
\begin{align}
&\min\left\{\left(\mathcal{A}^{\hat{\Pi}}V\right)(x,t,y)+H\left(x,t,\hat{\Pi}(x,t,y),y\right)-f_s(x,t,y,t),\right.\nonumber\\
&\left.\quad \quad c(x,t,y)-V_{x}(x,t,y)+V_{y}(x,t,y)\right\}=0,\ \forall (x,t,y)\in\mathcal{Q}, \label{mildv}\\
&V(0,t,y)=0,\ \forall(0,t,y)\in\mathcal{Q},\label{mildvT}\\
&\left(\mathcal{A}^{\hat{\Pi}}f^{s}\right)(x,t,y)+\beta(t-s)H\left(x,t,\hat\Pi(x,t,y),y\right)=0,\ \forall (x,t,y)\in \overline{W^{\hat{\Xi}}}, s\in[0,t],\label{mildfw}\\
& \beta(t-s)c(x,t,y)-f^{s}_{x}(x,t,y)+f^{s}_{y}(x,t,y)=0,\ \forall (x,t,y)\in P^{\hat{\Xi}}, s\in[0,t],\label{mildfp}\\
&f^{s}(0,t,y)=0,\ \forall (0,t,y)\in\mathcal{Q}, s\in[0,t].\label{mildfT}
\end{align}
\end{subequations}
(4). For functions $\varphi=f^{s},V,(x,t,y)\mapsto f(x,t,y,t)$, for any $(x,t,y)\in\mathcal{Q}$, any  $s\in[0,t]$, and any $n\geq t$, it holds that 
\begin{equation}
\mathbb{E}_{x,t,y}\int_{t}^{\tau_n}\left[\varphi_{x}\left(\hat{X}_{r},r,\hat\xi_{r})\sigma(\hat{X}_{r},r,\hat\pi_r,\hat\xi_{r}\right)\right]^{2}\mathrm{d}r<+\infty,
\nonumber\end{equation}
where $\left(\hat{\pi},\hat{\xi}\right):=\left(\pi^{x,t,y;\hat{\Pi},\hat{\Xi}},\xi^{x,t,y;\hat{\Pi},\hat{\Xi}}\right)$ is the regular-singular control process generated by $\left(\hat{\Pi},\hat{\Xi}\right)$ at $(x,t,y)$, $\hat{X}:=X^{x,t,y;\hat{\Pi},\hat{\Xi}}$ is the corresponding state process, and $\tau_{n}:=\tau^{x,t,y;\hat{\Pi},\hat{\Xi}}_{t}\wedge n$.\\
(5). For any $(x,t,y)\in\mathcal{Q}$, any $s\in[0,t]$, it holds that 

\begin{align}
&\lim\limits_{n\rightarrow+\infty}\mathbb{E}_{x,t,y}f^{s}\left(\hat{X}_{\tau_{n}},\tau_{n},\hat{\xi}_{\tau_{n}}\right)=0,\nonumber\\
&\lim\limits_{n\rightarrow+\infty}\mathbb{E}_{x,t,y}\left[V\left(\hat{X}_{\tau_{n}},\tau_{n},\hat{\xi}_{\tau_{n}}\right)-f\left(\hat{X}_{\tau_{n}},\tau_{n},\hat{\xi}_{\tau_{n}},\tau_{n}\right)\right]=0.\nonumber
\end{align}
(6). For any $(x,t,y)\in\mathcal{Q}$, any $U$-valued $\{\mathcal{F}_{r}\}_{r\ge t}$-progressively measurable regular control perturbation process $u=\{u_{r}\}_{r\ge t}$ which satisfies conditions specified in Definition \ref{mildequil}, item (a), and $\eta_{r}:=\xi^{u,\hat{\Xi}}_{r}$ (or any non-decreasing c\`{a}dl\`{a}g $\{\mathcal{F}_{r}\}_{r\ge t}$-adapted singular control perturbation process $\{\eta_{r}\}_{r\ge t}$ which satisfies conditions specified in Definition \ref{mildequil}, item (b), and $u_{r}:=\hat{\Pi}(X^{\hat{\Pi},\eta}_{r},r,\eta_{r})$), there exists a constant $h_0>0$ such that 

\begin{align}
&\mathbb{E}_{x,t,y}\int_{t}^{\tau^{x,t,y;u,\eta}_{t}\wedge(t+h_0)}\left[V_{x}\left(X^{u,\eta}_{r},r,\eta_{r})\sigma(X^{u,\eta}_{r},r,u_r,\eta_{r}\right)\right]^{2}\mathrm{d}r<+\infty,\nonumber\\
&\mathbb{E}_{x,t,y}\left[\sup\limits_{\substack{r\in(t,\tau^{x,t,y;u,\eta}_{t}\wedge(t+h_{0}))\\ h\in(0,h_{0})}}\left|f_s\left(X^{u,\eta}_{r},r,\eta_{r},r\right)+(\beta(r-t)-1) H\left(X^{u,\eta}_r, r,u_r, \eta_r \right)\right.\right.\nonumber\\
&\quad\quad\left.\left.-f_s\left(X^{u,\eta}_{\tau^{x,t,y;u,\eta}_{t}\wedge((t+h)-)},\tau^{x,t,y;u,\eta}_{t}\wedge(t+h),\eta_{\tau^{x,t,y;u,\eta}_{t}\wedge((t+h)-)},r\right)\right|\right]<+\infty.\nonumber
\end{align}

Then $\left(\hat{\Pi},\hat{\Xi}\right)$ is a mild equilibrium regular-singular control law and $V$ is the corresponding mild equilibrium value function. Moreover, $f$ has the probabilistic interpretation
\begin{equation}
\begin{aligned}
\nonumber
f(x,t,y,s)=&\mathbb{E}_{x,t,y}\left[\int_t^{\tau^{x,t,y;\hat{\Pi},\hat{\Xi}}_{t}} \beta(r-s) H\left(\hat{X}_r, r, \hat\pi_r, \hat{\xi}_r \right) \mathrm{d} r\right. \\
& \left.+\int_t^{\tau^{x,t,y;\hat{\Pi},\hat{\Xi}}_{t}} \beta(r-s) c\left(\hat{X}_{r-}, r, \hat{\xi}_{r-}\right) \mathrm{d} \hat{\xi}_r \right],\ \forall (x,t,y)\in\mathcal{Q}, s\in[0,t].
\end{aligned}
\end{equation}
\end{theorem}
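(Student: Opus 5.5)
The proof follows the three-step structure of Theorem \ref{verificationthm}. Steps 1 and 2 carry over verbatim, since conditions (1), (3), (4), (5) are the same and only the candidate process $(\hat X,\hat\xi)$ is involved. We apply the It\^{o}--Tanaka--Meyer formula to $f^s$ along $(\hat X,\hat\xi)$. We use \eqref{mildfw} on $\overline{W^{\hat\Xi}}$, and \eqref{mildfp} together with \eqref{cxy} on the jump segments $(\hat X_{r-}-a,r,\hat\xi_{r-}+a)\in P^{\hat\Xi}$. The martingale term vanishes by (4), and we let $n\to\infty$ using (5). This gives the probabilistic interpretation of $f$. Next, the relation $\mathcal{A}^{\hat\Pi}V=\mathcal{A}^{\hat\Pi}f(\cdot,\cdot,\cdot,t)$ on $\overline{W^{\hat\Xi}}$ follows from \eqref{mildv}, \eqref{mildfw} and the continuity of $\hat\Pi$ on $\overline{W^{\hat\Xi}}-W^{\hat\Xi}$. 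This yields $V(x,t,y)=f(x,t,y,t)=J(x,t,y;\hat\Pi,\hat\Xi)$. The restriction of \eqref{mildpi} to $\overline{W^{\hat\Xi}}$ plays no role here.

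For Step 3 we treat the two items of Definition \ref{mildequil} separately. In each case $(u,\eta)$ is a specific pair, and we repeat the decomposition of $J(x,t,y;\pi^h,\xi^h)-J(x,t,y;\hat\pi,\hat\xi)$ leading to the identity in \eqref{est0}. This identity uses only the structure of $(\pi^h,\xi^h)$ from \eqref{pihxih}, Step 2, and the local martingale condition in (6). It contains a drift term $\int[\mathcal{A}^{u}V+\beta(r-t)H-f_s(\cdots,r)]\,\mathrm{d}r$ and singular terms with integrand $\beta(r-t)c+V_y-V_x$.

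\emph{Case (a).} Here $\eta=\xi^{u,\hat\Xi}$, so $(X^{u,\eta}_r,r,\eta_r)\in\overline{W^{\hat\Xi}}$ for all $r$. Therefore \eqref{mildpi} gives $\mathcal{A}^{u}V+H-f_s(\cdot,t)\ge \mathcal{A}^{\hat\Pi}V+H(\cdot,\hat\Pi)-f_s\ge 0$ along the path. Moreover $\eta$ increases only on $P^{\hat\Xi}$, and every jump segment lies in $P^{\hat\Xi}$, where $c-V_x+V_y=0$. Hence the singular integrand reduces to $(\beta(r-t)-1)c$. Since $c\le 0$ and $\beta\le 1$, this term is \emph{nonnegative}. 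We can therefore simply drop it. No growth condition \eqref{growthcond} and no integrability condition \eqref{integrability2} are needed, which is precisely why these are absent here. The remaining drift term is handled by \eqref{est1}, using the $f_s$-integrability in (6) and dominated convergence.

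\emph{Case (b).} Here $u_r=\hat\Pi(X^{\hat\Pi,\eta}_r,r,\eta_r)$ but the state may leave $\overline{W^{\hat\Xi}}$. The drift inequality still holds everywhere: by \eqref{mildv}, $\mathcal{A}^{\hat\Pi}V+H(\cdot,\hat\Pi)-f_s\ge 0$ on all of $\mathcal{Q}$, and this is exactly the expression evaluated along the path. For the singular terms, \eqref{mildv} gives $V_y-V_x\ge -c$. Hence the integrand satisfies $\beta(r-t)c+V_y-V_x\ge(\beta(r-t)-1)c\ge 0$, again by $c\le0$ and $\beta\le 1$. Consequently the singular contributions are nonnegative and can be discarded. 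The conclusion \eqref{equicond} then follows from \eqref{est1}.

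The main obstacle is Case (a): there $\eta$ is generated by the reflection rather than prescribed. It need not satisfy \eqref{growthcond}, for instance because of an initial jump or reflection at a boundary. The Fatou arguments \eqref{est2}--\eqref{est3} are then unavailable. The sign assumption $c\le0$ is what circumvents this, and I would check carefully that the identity in \eqref{est0} does not itself implicitly use \eqref{growthcond}. It should not: that identity relies only on It\^{o}'s formula on $[t,\tau\wedge(t+h))$. I would also check that the jump at time $t$ is accounted for consistently via the $\mathrm{d}a$-integral representation together with \eqref{cxy}.
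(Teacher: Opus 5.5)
Your proposal is correct and is essentially the paper's proof: Steps 1--2 carry over unchanged, and in Step 3 the sign conditions $c\le 0$, $\beta\le 1$ make the terms $(\beta(r-t)-1)c$ in the lower bound \eqref{est0} nonnegative, so they are dropped rather than controlled via \eqref{growthcond}, \eqref{integrability2} and Fatou's lemma. Your case split (the path stays in $\overline{W^{\hat\Xi}}$ in item (a), and $u=\hat\Pi$ along the path in item (b)) makes explicit why the restricted argmin condition \eqref{mildpi} suffices, which the paper leaves implicit in Remark \ref{difference}; in case (a) you do not even need the jump segments to lie in $P^{\hat\Xi}$, since $c-V_x+V_y\ge 0$ holds on all of $\mathcal{Q}$ by \eqref{mildv}.
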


\begin{proof}
The proof is similar to Theorem \ref{verificationthm} and the only difference lies in {\bf Step 3}. Since the cost function $c$ is non-positive, it holds true that 
\begin{equation}
\begin{aligned}
\nonumber
\mathbb{E}_{x,t,y}\left[\int_{t}^{\tau^{x,t,y;u,\eta}_{t}\wedge(t+h)}\left(\beta(r-t)-1\right) c\left(X^{u,\eta}_{r-}, r, \eta_{r-}\right) \mathrm{d}\eta^{c}_r\right]\ge 0,
\end{aligned}
\end{equation}
and 
\begin{equation}
\begin{aligned}
\nonumber
\mathbb{E}_{x,t,y}\left[\sum\limits_{r\in[t,\tau^{x,t,y;u,\eta}_{t}\wedge((t+h)-)]}\int_{0}^{\Delta\eta_{r}}\left(\beta(r-t)-1\right) c\left(X^{u,\eta}_{r-}-a, r, \eta_{r-}+a\right)\mathrm{d}a\right]\ge 0.
\end{aligned}
\end{equation}
The remaining proofs are the same.
\end{proof}

\begin{remark}\label{difference}
The main difference between Theorem \ref{verificationthm} and Theorem \ref{mildverificationthm} lies in \eqref{mildpi}, i.e., $\hat\Pi$ is defined on $\mathcal{Q}$ but is the minimum value point of $\left(\mathcal{A}^{u}V\right)(x,t,y)+H(x,t,u,y)-f_s(x,t,y,t)$ only on $\overline{W^{\hat{\Xi}}}$. Besides, since the cost function $c$ is non-positive, in Theorem \ref{mildverificationthm}, we remove \eqref{integrability2} in the condition (6) of Theorem \ref{verificationthm}.
\end{remark}

\section{Application to effort and dividend problem}\label{Application}
In this section, we illustrate the applicability of the theory to the effort and dividend problem. Specifically, by solving the extended HJB system, we obtain an explicit equilibrium regular-singular control law depending on the effort and dividend thresholds under a mixture of exponential discount functions and a pseudo-exponential discount function. The convexity of the value function is rigorously established, and the equilibrium conditions are verified.
\subsection{Cost minimization with costly effort and frictional dividends}
We consider a firm or financial institution whose management seeks to minimize the expected total net cost of running the business until failure. The firm engages in a costly operational activity and distributes dividends to shareholders. The ruin event terminates all operations. The objective is to find the optimal trade-off between the cost of effort and the benefits of dividend payouts. For a detailed explanation of this model, we refer the reader to \citet{cadenillas2007optimal}.

For any fixed $(x,t)\in \mathcal{\tilde{Q}}:=[0,+\infty)\times[0,+\infty)$, suppose that the controlled surplus level of the firm $X^{\pi,\xi}$ under effort $\pi$ and dividend $\xi$ evolves according to the SDE:
\begin{equation}
\left\{\begin{array}{l}
\mathrm{d}X^{\pi,\xi}_{r}=\left(\mu+\pi_r\right)\mathrm{d}r+\sigma\mathrm{d}B_{r}-\mathrm{d}\xi_{r},\ r\in[t,\tau^{x,t;\pi,\xi}_{t}],\\
X^{\pi,\xi}_{t-}=x,
\end{array}\right.\nonumber
\end{equation}
where $\mu > 0$ is the baseline expected growth rate of the surplus, generated by the firm's core, non-discretionary business activities. This drift is autonomous and requires no active managerial intervention.  The regular control process $\pi=\left\{\pi_r\right\}_{r\ge t}$ is the level of effort the firm applies, representing the intensity of a costly revenue-enhancing activity. This activity could be the exploitation of a risk-free arbitrage opportunity, a performance-linked subsidy program, or an intensive marketing campaign. The activity generates a deterministic benefit by increasing the surplus drift (hence the term in the SDE), but the activity itself is costly to operate. This operational cost is what the objective functional seeks to minimize. Suppose that the level of effort is chosen from the interval $U=[0,M]$, where $M>0$ is the maximum effort level. $\sigma > 0$ is a constant diffusion coefficient. The term captures the exogenous, irreducible baseline risk inherent in the firm's operating environment. This risk is structural and entirely independent of management's actions. $\xi=\left\{\xi_r\right\}_{r\ge t}$ is the singular control process which is nondecreasing, c$\acute{a}$dl$\acute{a}$g, and $\Delta \xi_r=\xi_r-\xi_{r-}\le X_{r-}^{\pi,\xi}$. It represents the cumulative dividends distributed to shareholders and the firm cannot pay more dividends than it owns. $\tau^{x,t;\pi,\xi}_{t}:=\inf\left\{r\ge t|X^{\pi,\xi}_{r}= 0\right\}$ denotes the ruin time which is the first time the surplus level hits zero.

The firm's objective is to choose the effort intensity and the dividend payment so as to to minimize the total expected discounted net cost until the ruin time. The firm's objective is to find a regular-singular control $(\pi,\xi)$ that minimizes the objective functional
\begin{equation}
\begin{aligned}
J(x, t; \pi,\xi):=&\mathbb{E}_{x, t}\left[\int_t^{\tau^{x,t;\pi,\xi}_{t}} \beta(r-t)\pi_r\mathrm{d}r -c\int_t^{\tau^{x,t;\pi,\xi}_{t}} \beta(r-t)  \mathrm{d} \xi_r\right],
\end{aligned}\label{objexmple}
\end{equation}
where $\mathbb{E}_{x, t}$ denotes the expectation conditioning on $X^{\pi,\xi}_{t-}=x$. $\beta\in C([0,+\infty))$ is a general discount function which is non-negative and nonincreasing with $\beta(0)=1$. 

$\int_t^{\tau^{x,t;\pi,\xi}_{t}} \beta(r-t)\pi_r\mathrm{d}r $ in the objective is the operational cost of effort. The term $\pi_r$ here represents the instantaneous cost rate of the revenue-enhancing activity. Despite its beneficial effect on the surplus drift, the activity itself consumes resources that constitute a direct cost to the firm. Thus the firm seeks to reduce it.

$c\int_t^{\tau^{x,t;\pi,\xi}_{t}} \beta(r-t)  \mathrm{d} \xi_r$ is the benefit of dividend distributions. This benefit reduces the firm's overall net cost. The parameter $c\in(0,1)$ is the net fractional value that shareholders derive from each unit of surplus distributed as dividends. For every unit of surplus paid out, a fraction $1-c$ is lost to taxes, transaction costs, or agency frictions. Only the remaining fraction $c$ constitutes a genuine benefit to the firm's stakeholders. A larger dividend payout implies a larger deduction from the total cost, making the firm better off.

First of all, we can obtain the following simple proposition for the effort and dividend problem \eqref{objexmple}. 

\begin{proposition}\label{toyeg}
Suppose $M\ge \frac{c\mu}{1-c}$ and $\beta\in C^1([0,+\infty))$. We have $\hat\Pi(x,t):=M$, $\hat\Xi:=\left(W^{\hat{\Xi}},P^{\hat{\Xi}}\right)$, where
\begin{equation}
\begin{aligned}
&W^{\hat{\Xi}}:=\left\{(x,t)\in\mathcal{\tilde{Q}}|x=0\right\},\\
&P^{\hat{\Xi}}:=\left\{(x,t)\in\mathcal{\tilde{Q}}|x>0\right\},
\end{aligned}\nonumber
\end{equation}
is a mild equilibrium for the effort and dividend problem \eqref{objexmple}. Furthermore, $\left(\hat\Pi,\hat\Xi\right)$ is not an equilibrium for the objective \eqref{objexmple}.
\end{proposition}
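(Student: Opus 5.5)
We will not route the argument through Theorem \ref{mildverificationthm}. The candidate pays out all surplus at once, so $V(x,t)=-cx$ gives $c(x,t,y)-V_x+V_y\equiv 0$ for the cost function $-c$ of \eqref{objexmple}. Hence the set $W^{\hat\Xi}$ prescribed in condition (2) of that theorem would be empty, not $\{x=0\}$. Instead we verify Definition \ref{mildequil} and Definition \ref{equil} directly, using explicit formulas. The structural facts are as follows. Under $(\hat\Pi,\hat\Xi)$, from any $x>0$ the firm jumps to $0$ at time $t$ and is ruined immediately, so $J(x,t;\hat\Pi,\hat\Xi)=-cx$. Likewise the continuation value at any later point $(X,\theta)$ is $-cX$ when discounted to $\theta$. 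When discounted to $t$, it is $-c\beta(\theta-t)X$, which also covers the case $X=0$.

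\textbf{Mild condition (a).} If only the regular control is perturbed while $\hat\Xi$ is kept, the reflection SDE forces $\xi^{u,\hat\Xi}_t=y+x$ and $\tau=t$. The perturbed cost is therefore $-cx$ regardless of $u$, and \eqref{equicond} holds with equality.

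\textbf{Mild condition (b).} Here we take $u\equiv M$ and a singular perturbation $\eta$ satisfying \eqref{growthcond}. Write $\theta_h:=\tau^{u,\eta}_t\wedge(t+h)$ and $X:=X^{M,\eta}_{\theta_h-}\ge 0$. Then
\[
J(\pi^h,\xi^h)=\mathbb{E}\Big[\int_t^{\theta_h}\beta(r-t)M\,\mathrm{d}r-c\int_t^{\theta_h}\beta(r-t)\,\mathrm{d}\eta_r-c\,\beta(\theta_h-t)X\Big].
\]
Substitute $X=x+\int_t^{\theta_h}(\mu+M)\mathrm{d}r+\sigma(B_{\theta_h}-B_t)-(\eta_{\theta_h-}-y)$ and use optional stopping for the bounded stopping time $\theta_h$. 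This splits the difference $J(\pi^h,\xi^h)+cx$ into three pieces:
\begin{itemize}
\item[(i)] the term $\mathbb{E}\int_t^{\theta_h}[\beta(r-t)M-c\beta(\theta_h-t)(\mu+M)]\,\mathrm{d}r$, which equals $(M(1-c)-c\mu)\,\mathbb{E}[\theta_h-t]+o(h)\ge o(h)$ by the hypothesis $M\ge c\mu/(1-c)$ and continuity of $\beta$;
\item[(ii)] the dividend term $c\,\mathbb{E}\int_t^{\theta_h}(\beta(\theta_h-t)-\beta(r-t))\,\mathrm{d}\eta_r$, whose absolute value is at most $c(1-\beta(h))(\tilde M h+\mathbb{E}\Delta\eta_t\cdot 0)$ after noting that the jump at $r=t$ carries weight $\beta(0)=1$ on both sides up to a term $(1-\beta(h))\Delta\eta_t\le (1-\beta(h))x$; all of this is $o(h)$ or of favourable sign;
\item[(iii)] the remainder $c(1-\beta(\theta_h-t))\,\mathbb{E}[x+\dots]$, which is nonnegative up to $o(h)$, since the remaining part of $X$ is nonnegative.
\end{itemize}
Dividing by $h$ and letting $h\downarrow0$ gives \eqref{equicond}. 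The bookkeeping of the possible jump of $\eta$ at time $t$ and of the event $\{\tau<t+h\}$ is the main technical point of this step. My plan is to handle both by writing everything in terms of $X_{\theta_h-}\ge0$ and using $\beta\le 1$, $\beta\in C^1$.

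\textbf{Failure of the (joint) equilibrium.} Take $x>0$ and the joint perturbation $u\equiv 0$, $\eta\equiv y$: no effort and no dividends on $[t,t+h)$. This is admissible in $\tilde{\mathcal D}$, but it is not allowed in the mild definition because both controls change at once. Then
\[
J(\pi^h,\xi^h)+cx=-c\,\mathbb{E}[\beta(\theta_h-t)X_{\theta_h}]+cx=-c\mu h+cx(1-\beta(h))+o(h),
\]
where the ruin probability before $t+h$ is $o(h)$ (in fact exponentially small) for fixed $x>0$. Hence the limit in \eqref{equicond} equals $-c\mu-c\beta'(0)x$. Since $c>0$, $\mu>0$ and $\beta'(0)$ is finite because $\beta\in C^1$, this limit is negative for $0<x<\mu/|\beta'(0)|$ (and for every $x>0$ if $\beta'(0)=0$). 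Thus \eqref{equicond} fails and $(\hat\Pi,\hat\Xi)$ is not an equilibrium.

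The main obstacle I expect is making the $o(h)$ estimates in the mild step (b) rigorous for a general $\eta$ that may jump at time $t$. I will first try to bound every discount-error term by $(1-\beta(h))$ times quantities with bounded expectation, using \eqref{growthcond} and $\Delta\eta_t\le x$.
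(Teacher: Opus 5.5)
Your overall route is the paper's. You verify Definition \ref{mildequil} directly rather than through Theorem \ref{mildverificationthm}, and your remark that $-c-V_x\equiv 0$ would make the prescribed waiting region empty is correct. Mild item (a) is trivial because the reflection forces $\tau=t$. The non-equilibrium part uses the same joint perturbation $u\equiv 0$, $\eta\equiv y$ and reaches the same limit $-c(\beta'(0)x+\mu)$.

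\textbf{The gap is in mild item (b), piece (ii).} Your (i) and (ii) come from substituting $X$ into $-c\beta(\theta_h-t)X$. So the integrand of (ii) at $r=t$ is $\beta(\theta_h-t)-1$, and a jump of $\eta$ at time $t$ contributes $-c\,\mathbb{E}[(1-\beta(\theta_h-t))\Delta\eta_t]$. This is $O(h)$, not $o(h)$, and it has the unfavourable sign. For instance, for $\eta$ with a single jump $\Delta\eta_t=x/2$, it contributes $c\beta'(0)x/2<0$ to the limit in \eqref{equicond} whenever $\beta'(0)<0$. So the claim ``all of this is $o(h)$ or of favourable sign'' is false. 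Your fallback of bounding each discount error by $(1-\beta(h))$ times a bounded quantity only gives $O(h)$. Likewise, optional stopping does not remove $c\sigma\,\mathbb{E}[\beta(\theta_h-t)(B_{\theta_h}-B_t)]$, because the weight $\beta(\theta_h-t)$ is random.

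\textbf{The paper's repair.} Pair the jump term with the $x$-part of (iii): $c\,\mathbb{E}[(1-\beta(\theta_h-t))(x-\Delta\eta_t)]\ge 0$ because $\Delta\eta_t\le x$. The remaining jumps in $(t,t+h)$ then cost at most $c(1-\beta(h))\tilde M h=o(h)$ by \eqref{growthcond}. This is precisely the paper's step leading to $-c\,\mathbb{E}[(1-\beta)(\eta_{\tau\wedge((t+h)-)}-\eta_{t-}-x)]$. The Brownian cross term is $o(h)$ because it equals $c\sigma\,\mathbb{E}[(\beta(\theta_h-t)-\beta(h))(B_{\theta_h}-B_t)]$, which is bounded by $c\sigma(1-\beta(h))\sqrt{h}$. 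The paper drops this term silently as well.

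\textbf{A simpler repair, following your own hint in (iii).} Set $\bar X:=X^{M,\eta}_{\tau\wedge((t+h)-)}\ge 0$, and write $\beta(\theta_h-t)\bar X=\bar X-(1-\beta(\theta_h-t))\bar X$ before applying optional stopping to $\bar X$. This gives
\[
J(x,t;\pi^h,\xi^h)+cx=\mathbb{E}\int_t^{\theta_h}\bigl[\beta(r-t)M-c(\mu+M)\bigr]\mathrm{d}r+c\,\mathbb{E}\int_{[t,\tau\wedge((t+h)-)]}(1-\beta(r-t))\,\mathrm{d}\eta_r+c\,\mathbb{E}\bigl[(1-\beta(\theta_h-t))\bar X\bigr].
\]
The last two terms are nonnegative. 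The first is at least $-(1-\beta(h))Mh$, because $M(1-c)\ge c\mu$. This proves item (b) with no jump bookkeeping and no Brownian cross term. It also needs neither \eqref{growthcond} nor $\beta\in C^1$; continuity of $\beta$ at $0$ suffices.
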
 
\begin{proof}
Our proof will proceed according to the definition. First, it is easy to show that $\left(\hat\Pi,\hat\Xi\right)$ is an admissible regular-singular control law for the objective \eqref{objexmple}. Then, we show that $\left(\hat\Pi,\hat\Xi\right)$ is a mild equilibrium.

According to Definition \ref{mildequil}, for any initial $(x,t)\in\mathcal{\tilde{Q}}$, we know that $J(x, t; \hat\pi,\hat\xi)=-cx$. For any pair of perturbations process $(u,\eta)$ which satisfies conditions specified in Definition \ref{mildequil}, item (a), any $h>0$, we have $J(x, t; \pi^h,\xi^h)=-cx$. Therefore, item (a) in Definition \ref{mildequil} holds. 

For any pair of perturbations process $(u,\eta)$ which satisfies conditions specified in Definition \ref{mildequil}, item (b), any $h\in(0,\tilde h)$, we have $\tau^{x,t;\pi^h,\xi^h}_{t}=\tau^{x,t;u,\eta}_{t}\wedge (t+h)$ and
\begin{equation}
\begin{aligned}
X^{\pi^h,\xi^h}_{r}=&x+\int_{t}^{r}\left(\mu+\pi^h_l\right)\mathrm{d} l+\sigma (B_r-B_t)-\xi^h_r+\xi^h_{t-}\\
=& x+(\mu+M)(r-t)+\sigma (B_r-B_t)-\eta_r+\eta_{t-},\ \forall r\in[t,\tau^{x,t;u,\eta}_{t}\wedge ((t+h)-)].
\end{aligned}\nonumber
\end{equation}
Therefore, combining with $M\ge \frac{c\mu}{1-c}$, we have 
\begin{equation}
\begin{aligned}
&J(x, t; \pi^h,\xi^h)=\mathbb{E}_{x, t}\left[\int_t^{\tau^{x,t;\pi^h,\xi^h}_{t}} \beta(r-t)\pi^h_r\mathrm{d}r -c\int_t^{\tau^{x,t;\pi^h,\xi^h}_{t}} \beta(r-t)  \mathrm{d} \xi^h_r\right]\\
&\ge\mathbb{E}_{x, t}\left[\beta((\tau^{x,t;u,\eta}_{t}-t)\wedge h)((\tau^{x,t;u,\eta}_{t}-t)\wedge h)M-c\int_t^{\tau^{x,t;\pi^h,\xi^h}_{t}} \beta(r-t)  \mathrm{d} \xi^h_r\right]\\
&\ge\mathbb{E}_{x, t}\left[\beta((\tau^{x,t;u,\eta}_{t}\!-\!t)\wedge h)((\tau^{x,t;u,\eta}_{t}\!-\!t)\wedge h)M\!-\!c\eta_{\tau^{x,t;u,\eta}_{t}\wedge ((t+h)-)}\!+\!c\eta_{t-}\!-\!c\beta((\tau^{x,t;u,\eta}_{t}\!-\!t)\wedge h)X^{\pi^h,\xi^h}_{\tau^{x,t;u,\eta}_{t}\wedge ((t+h)-)}\right]\\
&=\mathbb{E}_{x, t}\left[\beta((\tau^{x,t;u,\eta}_{t}-t)\wedge h)((\tau^{x,t;u,\eta}_{t}-t)\wedge h)M-c\eta_{\tau^{x,t;u,\eta}_{t}\wedge ((t+h)-)}+c\eta_{t-}-c\beta((\tau^{x,t;u,\eta}_{t}-t)\wedge h)\left(x\right.\right.\\
&\left.\left.+(\mu+M)((\tau^{x,t;u,\eta}_{t}-t)\wedge h)+\sigma (B_{\tau^{x,t;u,\eta}_{t}\wedge (t+h)}-B_t)-\eta_{\tau^{x,t;u,\eta}_{t}\wedge ((t+h)-)}+\eta_{t-}\right)\right]\\
&\ge-c\mathbb{E}_{x, t}\left[\left(1-\beta((\tau^{x,t;u,\eta}_{t}-t)\wedge h)\right)\left(\eta_{\tau^{x,t;u,\eta}_{t}\wedge ((t+h)-)}-\eta_{t-}\right)+\beta((\tau^{x,t;u,\eta}_{t}-t)\wedge h)x\right].
\end{aligned}\nonumber
\end{equation}
Thus, combining with the facts that $\Delta \eta_t=\eta_t-\eta_{t-}\le x$ and $\eta$ satisfies property \eqref{growthcond}, we have 
\begin{equation}
\begin{aligned}
&\frac{J(x, t; \pi^h,\xi^h)-J(x, t; \hat\pi,\hat\xi)}{h}\ge-\frac{c}{h}\mathbb{E}_{x, t}\left[\left(1-\beta((\tau^{x,t;u,\eta}_{t}-t)\wedge h)\right)\left(\eta_{\tau^{x,t;u,\eta}_{t}\wedge ((t+h)-)}-\eta_{t-}-x\right)\right]\\&\ge-\frac{c}{h}\mathbb{E}_{x, t}\left[(1-\beta(h))\left(\eta_{\tau^{x,t;u,\eta}_{t}\wedge ((t+h)-)}-\eta_{t}\right)\right]\ge -\frac{c}{h}\mathbb{E}_{x, t}\left[(1-\beta(h))\left(\eta_{(t+h)-}-\eta_{t}\right)\right]\ge-c\tilde{M}(1-\beta(h)). 
\end{aligned}\nonumber
\end{equation}
Hence, $\liminf\limits_{h\downarrow 0}\frac{J\left(x,t,y;\pi^{h},\xi^{h}\right)-J\left(x,t,y;\hat{\pi},\hat{\xi}\right)}{h}\ge\liminf\limits_{h\downarrow 0}\left(-c\tilde{M}(1-\beta(h))\right)=0$ and item (b) in Definition \ref{mildequil} holds. Thus, $\left(\hat\Pi,\hat\Xi\right)$ is a mild equilibrium.

Finally, we show that $\left(\hat\Pi,\hat\Xi\right)$ is not an equilibrium. For initial $(x,t)\in\mathcal{\tilde{Q}}$ such that $x>0$ and $\beta^{\prime}(0)x+\mu>0$,  let $u_r=0, \eta_r=\eta_{t-}, \forall r\ge t$. We have the pair of perturbations process $(u,\eta)\in \mathcal{\tilde D}^{x,t;\hat\Pi,\hat\Xi}$. For any $h>0$, we have 
\begin{equation}
\begin{aligned}
X^{\pi^h,\xi^h}_{r}=x+\mu(r-t)+\sigma (B_r-B_t),\ \forall r\in[t,\tau^{x,t;u,\eta}_{t}\wedge ((t+h)-)].
\end{aligned}\nonumber
\end{equation}
Therefore, 
\begin{equation}
\begin{aligned}
J(x, t; \pi^h,\xi^h)=& -c\mathbb{E}_{x, t}\left[\beta((\tau^{x,t;u,\eta}_{t}-t)\wedge h)X^{\pi^h,\xi^h}_{\tau^{x,t;u,\eta}_{t}\wedge ((t+h)-)}\right]\\=&-c\mathbb{E}_{x, t}\left[\beta((\tau^{x,t;u,\eta}_{t}-t)\wedge h)\left(x+\mu((\tau^{x,t;u,\eta}_{t}-t)\wedge h)\right)\right].\end{aligned}\nonumber
\end{equation}
Hence, since $\tau^{x,t;u,\eta}_{t}>t$ and by the dominated convergence theorem, we have
 \begin{equation}
\begin{aligned}
\liminf\limits_{h\downarrow 0}\frac{J\left(x,t,y;\pi^{h},\xi^{h}\right)\!-\!J\left(x,t,y;\hat{\pi},\hat{\xi}\right)}{h}&=\liminf\limits_{h\downarrow 0}\left\{\!-\frac{c}{h}\mathbb{E}_{x, t}\left[\beta((\tau^{x,t;u,\eta}_{t}\!-\!t)\wedge h)\left(x\!+\!\mu((\tau^{x,t;u,\eta}_{t}\!-\!t)\wedge h)\right)\!-\!x\right]\right\}\\&=-c(\beta^\prime(0)x+\mu)<0.
\end{aligned}\nonumber
\end{equation}
Thus, $\left(\hat\Pi,\hat\Xi\right)$ is not an equilibrium.
\end{proof}

Suppose $M\ge \frac{c\mu}{1-c}$ and $\beta\in C^1([0,+\infty))$. Proposition \ref{toyeg} shows that applying the maximal level of effort and paying the dividend once the surplus exceeds the threshold $0$ is the mild equilibrium regular-singular control law but not the equilibrium.

In the following subsections, we will derive some non-trivial equilibria by solving the extended HJB system \eqref{v}--\eqref{fT} under certain special forms of discount functions. The extended HJB system associated with the effort and dividend problem is
\begin{equation}
\begin{aligned}\label{extendedhjb}
&\min\left\{V_t(x,t)+\left(\mu+\hat\Pi(x,t)\right)V_x(x,t)+\frac{1}{2}\sigma^2V_{xx}(x,t)+\hat\Pi(x,t)-f_s(x,t,t), -c-V_{x}(x,t)\right\}=0,\ \forall (x,t)\in\mathcal{\tilde{Q}}, \\
&V(0,t)=0,\ \forall t\ge 0,\\
&f_t(x,t,s)+\left(\mu+\hat\Pi(x,t)\right)f_x(x,t,s)+\frac{1}{2}\sigma^2f_{xx}(x,t,s)+\beta(t-s)\hat\Pi(x,t)=0,\ \forall (x,t)\in \overline{W^{\hat{\Xi}}}, s\in[0,t],\\
& -\beta(t-s)c-f_{x}(x,t,s)=0,\ \forall (x,t)\in P^{\hat{\Xi}}, s\in[0,t],\\
&f(0,t,s)=0,\ \forall t\ge 0, s\in[0,t],
\end{aligned}
\end{equation}
where $\hat\Pi(x,t)\in\mathop{\mathrm{argmin}}\limits_{u\in[0,M]}\left\{V_t(x,t)+(\mu+u)V_x(x,t)+\frac{1}{2}\sigma^2V_{xx}(x,t)+u-f_s(x,t,t)\right\}, \forall (x,t)\in\mathcal{\tilde{Q}}$, and
\begin{equation}
\begin{aligned}
&W^{\hat{\Xi}}:=\left\{(x,t)\in\mathcal{\tilde{Q}}|-c-V_{x}(x,t)>0\right\},\\
&P^{\hat{\Xi}}:=\left\{(x,t)\in\mathcal{\tilde{Q}}|-c-V_{x}(x,t)=0\right\}.
\end{aligned}\nonumber
\end{equation}

\subsection{A mixture of exponential discount functions}
In this subsection, we give an explicit solution to the extended HJB system above under a discount function, which is a mixture of exponential discount functions:
\begin{equation}\label{eq:mix_exp_discount}
\beta(t) := \sum\limits_{i=1}^{2}\omega_i \mathrm{e}^{-\delta_i t},\  \forall t\ge 0,
\end{equation}
where $\omega_i>0$, $\sum\limits_{i=1}^{2}\omega_i=1$, $0<\delta_1< \delta_2$, and $\delta_i$ is the constant discount rate. \eqref{eq:mix_exp_discount} can be viewed as a special case of weighted discounting where the firm consists of two groups of shareholders with different discount rates (see \citet{ebert2020weighted} for more detail). 

Since the problem is time-homogeneous, we assume that $V$ is independent of time $t$. Inspired by \citet{hu2025equilibrium}, we consider the following ansatz:
\begin{equation}\begin{aligned}\label{fv}
	f(x,t,s)&=\sum\limits_{i=1}^{2}\omega_i \mathrm{e}^{-\delta_i (t-s)}V_i(x),\  \forall (x,t)\in\mathcal{\tilde{Q}}, s\in[0,t],\\
	V(x)&=\sum\limits_{i=1}^{2}\omega_i V_i(x),\ \forall x\ge 0.\end{aligned}\end{equation}
Furthermore, we assume that there exist $0 < x_1 < x_2$ such that $V'(x) < -1$ when $0\le x < x_1$, $V'(x_1)=-1$, $-1 < V'(x) < -c$ when $x_1 < x < x_2$, $V'(x) =-c$ when $x\ge x_2$. Then the candidate regular-singular control law $\left(\hat\Pi,\hat\Xi\right)$ is defined as 
\begin{equation}\label{hatpi}
\hat \Pi(x,t) := \left\{\begin{aligned}
& M,\quad  0 \le x < x_1, t\ge0,\\
& 0, \qquad x \geq x_1, t\ge 0,
\end{aligned}\right.
\end{equation}
and
\begin{equation}
\begin{aligned}
&W^{\hat{\Xi}}:=\left\{(x,t)\in\mathcal{\tilde{Q}}|0\leq x< x_2\right\},\\
&P^{\hat{\Xi}}:=\left\{(x,t)\in\mathcal{\tilde{Q}}|x\ge x_2\right\}.
\end{aligned}\label{hatxi}
\end{equation}
For fixed $0 < x_1 < x_2$, suppose that $V_i(x)$ is given by the system of ODEs
\begin{equation}\label{eq:Vi_ODE}
\begin{cases}
\frac{1}{2} \sigma^2 V^{\prime\prime}_i(x) + (\mu + M) V^\prime_i(x) - \delta_i V_i(x) +M = 0, & x \in [0, x_1),\\
\frac{1}{2} \sigma^2 V^{\prime\prime}_i(x) + \mu V^\prime_i(x) - \delta_i V_i(x) = 0, \qquad & x \in [x_1, x_2),\\
-c- V^\prime_i(x) = 0, & x \in [x_2, +\infty),\\
V_i(0)=0.
\end{cases}
\end{equation}
Denote by
\begin{equation}\begin{aligned}\nonumber
&\theta_{i,1}:= \frac{-(\mu+M) + \sqrt{(\mu+M)^2 + 2 \delta_i \sigma^2}}{\sigma^2} ,\quad \theta_{i,2}:= \frac{\mu+M + \sqrt{(\mu+M)^2 + 2 \delta_i \sigma^2}}{\sigma^2},\\
&\theta_{i,3}:= \frac{-\mu + \sqrt{\mu^2 + 2 \delta_i \sigma^2}}{\sigma^2} ,\quad\theta_{i,4}:= \frac{\mu + \sqrt{\mu^2 + 2 \delta_i \sigma^2}}{\sigma^2}.\end{aligned}\end{equation}
Thus a general solution to the ODEs \eqref{eq:Vi_ODE} has the form
\begin{equation}\label{Vi}
V_i(x) = \left\{\begin{aligned}
& \frac{M}{\delta_i} + A_{i,1} \mathrm{e}^{\theta_{i,1}x} + A_{i,2} \mathrm{e}^{-\theta_{i,2}x}, & x \in [0, x_1),\\
& A_{i,3} \mathrm{e}^{\theta_{i,3} x} + A_{i,4} \mathrm{e}^{-\theta_{i,4}x}, & x \in [x_1, x_2),\\
& -cx + A_{i,5}, \qquad & x \in [x_2, +\infty),
\end{aligned}\right.
\end{equation}
where $A_{i,j}$ are the constants to be determined.

Now to determine the values of $A_{i,j}$, we use the boundary condition and smooth fit principle, that is,
\begin{equation}\nonumber
V_i(0) = 0,\ V_i(x_1 -) = V_i(x_1+), \ V^\prime_i(x_1 -) = V^\prime_i(x_1+),\ V_i(x_2 -) = V_i(x_2+),\ V^\prime_i(x_2 -) = V^\prime_i(x_2+).
\end{equation}
Then $A_{i,j}$ satisfy the following linear equation system:
\begin{equation}\nonumber
\left\{\begin{aligned}
& \frac{M}{\delta_i} + A_{i,1} + A_{i,2} = 0, \\
& \frac{M}{\delta_i} + A_{i,1} \mathrm{e}^{\theta_{i,1}x_1} + A_{i,2} \mathrm{e}^{-\theta_{i,2}x_1} = A_{i,3} \mathrm{e}^{\theta_{i,3}x_1} + A_{i,4} \mathrm{e}^{-\theta_{i,4}x_1}, \\
& A_{i,1} \theta_{i,1} \mathrm{e}^{\theta_{i,1}x_1} - A_{i,2} \theta_{i,2} \mathrm{e}^{-\theta_{i,2}x_1} = A_{i,3} \theta_{i,3} \mathrm{e}^{\theta_{i,3}x_1} - A_{i,4} \theta_{i,4} \mathrm{e}^{-\theta_{i,4}x_1}, \\
& A_{i,3} \mathrm{e}^{\theta_{i,3}x_2} + A_{i,4} \mathrm{e}^{-\theta_{i,4}x_2} = -cx_2 + A_{i,5},\\
& A_{i,3}\theta_{i,3} \mathrm{e}^{\theta_{i,3}x_2} - A_{i,4} \theta_{i,4}\mathrm{e}^{-\theta_{i,4}x_2}  = -c.
\end{aligned}\right.
\end{equation}
Solving the above system of linear equations, we have

\begin{equation}\nonumber
\begin{aligned}
 A_{i,j}  =\frac{MB_{i,j}}{\delta_i D_i}+\frac{cC_{i,j}}{D_i},\ j=1,2,3,4,\end{aligned}
\end{equation}
where
\begin{equation}\nonumber\begin{aligned}
B_{i,1} &=\theta_{i,3}\theta_{i,4} \mathrm{e}^{\theta_{i,3} x_1 - \theta_{i,4} x_2} - \theta_{i,3}\theta_{i,4} \mathrm{e}^{-\theta_{i,4} x_1 + \theta_{i,3} x_2}-\theta_{i,4}(\theta_{i,2} +\theta_{i,3}) \mathrm{e}^{(-\theta_{i,2} + \theta_{i,3}) x_1- \theta_{i,4} x_2} \\&\quad- \theta_{i,3}(\theta_{i,2} - \theta_{i,4})  \mathrm{e}^{(-\theta_{i,2} - \theta_{i,4}) x_1 + \theta_{i,3} x_2},\\
B_{i,2} &=-\theta_{i,3}\theta_{i,4} \mathrm{e}^{\theta_{i,3} x_1 - \theta_{i,4} x_2} + \theta_{i,3}\theta_{i,4} \mathrm{e}^{-\theta_{i,4} x_1 + \theta_{i,3} x_2}-\theta_{i,4}(\theta_{i,1} -\theta_{i,3}) \mathrm{e}^{(\theta_{i,1} + \theta_{i,3}) x_1- \theta_{i,4} x_2} \\&\quad- \theta_{i,3}(\theta_{i,1} + \theta_{i,4})  \mathrm{e}^{(\theta_{i,1} - \theta_{i,4}) x_1 + \theta_{i,3} x_2},\\
B_{i,3} &=\theta_{i,2}\theta_{i,4} \mathrm{e}^{-\theta_{i,2} x_1 - \theta_{i,4} x_2} + \theta_{i,1}\theta_{i,4} \mathrm{e}^{\theta_{i,1} x_1- \theta_{i,4} x_2}-\theta_{i,4}(\theta_{i,1} +\theta_{i,2}) \mathrm{e}^{(\theta_{i,1} - \theta_{i,2}) x_1- \theta_{i,4} x_2} ,\\
B_{i,4} &=\theta_{i,2}\theta_{i,3} \mathrm{e}^{-\theta_{i,2} x_1 + \theta_{i,3} x_2} + \theta_{i,1}\theta_{i,3} \mathrm{e}^{\theta_{i,1} x_1+\theta_{i,3} x_2}-\theta_{i,3}(\theta_{i,1} +\theta_{i,2}) \mathrm{e}^{(\theta_{i,1} - \theta_{i,2}) x_1+ \theta_{i,3} x_2} ,
\end{aligned}
\end{equation}
\begin{equation}\nonumber\begin{aligned}
C_{i,1} &= -C_{i,2} =-(\theta_{i,4}+\theta_{i,3})\mathrm{e}^{(\theta_{i,3}-\theta_{i,4})x_1},\\
C_{i,2} &=(\theta_{i,4}+\theta_{i,3})\mathrm{e}^{(\theta_{i,3}-\theta_{i,4})x_1},\\
C_{i,3} &=(\theta_{i,4}-\theta_{i,2})\mathrm{e}^{(-\theta_{i,2}-\theta_{i,4})x_1}-(\theta_{i,1}+\theta_{i,4})\mathrm{e}^{(\theta_{i,1}-\theta_{i,4})x_1},\\
C_{i,4} &=(\theta_{i,2}+\theta_{i,3})\mathrm{e}^{(-\theta_{i,2}+\theta_{i,3})x_1}+(\theta_{i,1}-\theta_{i,3})\mathrm{e}^{(\theta_{i,1}+\theta_{i,3})x_1},
\end{aligned}
\end{equation}
\begin{equation}\nonumber\begin{aligned}
D_i=&\theta_{i,4}(\theta_{i,1} - \theta_{i,3}) \mathrm{e}^{(\theta_{i,1}+\theta_{i,3}) x_1 - \theta_{i,4} x_2}  + \theta_{i,3}(\theta_{i,1}+\theta_{i,4})  \mathrm{e}^{(\theta_{i,1} - \theta_{i,4}) x_1 + \theta_{i,3} x_2}  \\
&+\theta_{i,4}(\theta_{i,2} + \theta_{i,3})\mathrm{e}^{(-\theta_{i,2}+\theta_{i,3} ) x_1 - \theta_{i,4} x_2} +\theta_{i,3}(\theta_{i,2} - \theta_{i,4})  \mathrm{e}^{(-\theta_{i,2}-\theta_{i,4}) x_1 + \theta_{i,3} x_2},	
\end{aligned}\end{equation}
and $A_{i,5}=cx_2+A_{i,3} \mathrm{e}^{\theta_{i,3}x_2} + A_{i,4} \mathrm{e}^{-\theta_{i,4}x_2}$.

It remains to determine the values of $x_1$ and $x_2$, which are solved from $V^\prime(x_1) = -1$ ($\Leftrightarrow V^{\prime\prime}(x_1-)=V^{\prime\prime}(x_1+)$) and $V^{\prime\prime}(x_2-) = 0=V^{\prime\prime}(x_2+)$, that is,
\begin{equation}\label{eq:x1x2}
\left\{\begin{aligned}
& \sum\limits_{i=1}^{2}\omega_i \left(A_{i,1} \theta_{i,1} \mathrm{e}^{\theta_{i,1}x_1} - A_{i,2} \theta_{i,2} \mathrm{e}^{-\theta_{i,2}x_1}\right)=-1,\\
& \sum\limits_{i=1}^{2}\omega_i\left(A_{i,3}\theta_{i,3}^2 \mathrm{e}^{\theta_{i,3}x_2} + A_{i,4} \theta_{i,4}^2\mathrm{e}^{-\theta_{i,4}x_2}\right) = 0.
\end{aligned}\right.
\end{equation}

Suppose there exist  $0 < x_1 < x_2$ that solve \eqref{eq:x1x2}. We show that \eqref{fv} is a solution to the extended HJB system \eqref{extendedhjb} under the discount function \eqref{eq:mix_exp_discount}. Our proof is inspired by \citet{hu2025equilibrium}. The crux is to establish the convexity of $V$.

\begin{proposition}\label{convex1}
Suppose there exist  $0 < x_1 < x_2$ that solve \eqref{eq:x1x2}. Then, \eqref{fv} and \eqref{Vi} are a solution to the extended HJB system \eqref{extendedhjb} under the discount function \eqref{eq:mix_exp_discount}. In addition, the solution $V$ is decreasing and convex on $[0,+\infty)$.
\end{proposition}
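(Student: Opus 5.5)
The plan is to verify the extended HJB system \eqref{extendedhjb} directly from the ansatz \eqref{fv}, using the ODEs \eqref{eq:Vi_ODE} together with the smooth-fit conditions. Convexity of $V$ is the only genuinely nontrivial input; every inequality needed in \eqref{extendedhjb} will be reduced to it.

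\emph{Step 1 (algebraic identities).} Since $f(x,t,s)=\sum_i\omega_i e^{-\delta_i(t-s)}V_i(x)$, we have
\[
f_t(x,t,s)=-\sum_i\omega_i\delta_i e^{-\delta_i(t-s)}V_i(x),\qquad f_s(x,t,t)=\sum_i\omega_i\delta_iV_i(x).
\]
Substituting into the $f$-equation on $\overline{W^{\hat\Xi}}=\{x\le x_2\}$ gives $\sum_i\omega_ie^{-\delta_i(t-s)}\big[\tfrac12\sigma^2V_i''+(\mu+\hat\Pi)V_i'-\delta_iV_i+\hat\Pi\big]$, which vanishes by the first two lines of \eqref{eq:Vi_ODE}, with $\hat\Pi=M$ or $0$. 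At $x=x_2$ we use one-sided limits, relying on the $C^1$ fit. On $P^{\hat\Xi}$ we have $f_x=-c\sum_i\omega_ie^{-\delta_i(t-s)}=-c\beta(t-s)$, and $f(0,t,s)=0$ follows from $V_i(0)=0$. In the same way, the first argument of the min in \eqref{extendedhjb}, evaluated at $\hat\Pi$, equals $\sum_i\omega_i[\ldots]=0$ on $[0,x_2)$.

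\emph{Step 2 (regularity).} Each $V_i$ is $C^1$ by smooth fit. At $x_1$, the ODEs give the jump $V_i''(x_1+)-V_i''(x_1-)=\frac{2M}{\sigma^2}(V_i'(x_1)+1)$. Summing with weights $\omega_i$ and using the first equation of \eqref{eq:x1x2}, namely $V'(x_1)=-1$, shows that $V''$ is continuous at $x_1$. At $x_2$, the second equation of \eqref{eq:x1x2} gives $V''(x_2-)=0=V''(x_2+)$. Hence $V\in C^2$.

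\emph{Step 3 (convexity; the main obstacle).} I will show $V''\ge0$ on $[0,x_2]$ and note that $V''\equiv0$ on $[x_2,\infty)$. The difficulty is that the individual $V_i$ need not be convex and only their weighted sum is. On $[x_1,x_2]$, differentiate the second line of \eqref{eq:Vi_ODE}. This shows that $g_i:=V_i'$ solves $\tfrac12\sigma^2g_i''+\mu g_i'-\delta_ig_i=0$, so $V''=\sum_i\omega_i(a_i\theta_{i,3}e^{\theta_{i,3}x}-b_i\theta_{i,4}e^{-\theta_{i,4}x})$ is an exponential sum. I would first try the approach of \citet{hu2025equilibrium}. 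Use $V''(x_2)=0$ and the relation $\tfrac12\sigma^2V''=\sum_i\omega_i\delta_iV_i-\mu V'$ (respectively $\tfrac12\sigma^2V''=\sum_i\omega_i\delta_iV_i-(\mu+M)V'-M$ on $[0,x_1)$). Differentiating this relation gives
\[
\tfrac12\sigma^2V'''+(\mu+\hat\Pi)V''=\sum_i\omega_i\delta_iV_i'.
\]
The right-hand side is controlled by the sign information on the $V_i'$ (each $V_i'\le0$, proved by the same single-$\delta$ argument as in the time-consistent case). This yields that $e^{2(\mu+\hat\Pi)x/\sigma^2}V''$ is monotone on each piece, so $V''$ cannot change sign away from $x_2$. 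If this fails, the fallback is a Descartes-type bound on the number of zeros of exponential sums, combined with the boundary values $V''(x_2)=0$, $V'(x_1)=-1$ and $V(0)=0$.

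\emph{Step 4 (conclusion).} Convexity makes $V'$ nondecreasing, with $V'(x_1)=-1$ and $V'\equiv-c$ on $[x_2,\infty)$. Hence $V'<-1$ on $[0,x_1)$ and $-1<V'<-c$ on $(x_1,x_2)$, recovering the assumed ordering, and $V$ is decreasing. This gives $-c-V'\ge0$, with equality exactly on $P^{\hat\Xi}$, and it shows that $\hat\Pi$ in \eqref{hatpi} is the argmin, since the coefficient of $u$ is $V'+1$. It remains to check that the first term of the min is nonnegative on $[x_2,\infty)$. There it equals $-c\mu+cx\sum_i\omega_i\delta_i-\sum_i\omega_i\delta_iA_{i,5}$, which is increasing in $x$. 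It vanishes at $x_2$ by continuity of the left-side ODE combined with $V''(x_2)=0$. Therefore the min in \eqref{extendedhjb} equals $0$ everywhere.
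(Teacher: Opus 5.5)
\textbf{Gap in Step 3.} Your Steps 1, 2 and 4 are sound; Step 4 even makes explicit the check of the first term of the min on $[x_2,\infty)$, which the paper leaves implicit. Step 3, however, rests on a claim that is not available: that each $V_i'\le 0$, so that $\sum_i\omega_i\delta_iV_i'\le 0$ and $e^{2(\mu+\hat\Pi)x/\sigma^2}V''$ is monotone.

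The time-consistent argument you invoke gets $V_i'<0$ from convexity of the $\delta_i$-\emph{optimal} value function. Here $V_i$ is the $\delta_i$-discounted cost of a policy whose thresholds are fixed by the weighted conditions \eqref{eq:x1x2}, not by $V_i'(x_1)=-1$ and $V_i''(x_2-)=0$. So $V_i$ need not be convex, and that argument does not apply.

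\emph{On $[x_1,x_2)$} the sign $V_i'<0$ is still true, but it needs its own proof. The paper does a case analysis on the signs of $A_{i,3},A_{i,4}$, using $A_{i,1}<0$ (from $D_i>0$, $B_{i,1}<0$, $C_{i,1}<0$) and $V_i(0)=0$. Granted that, your monotonicity of $e^{2\mu x/\sigma^2}V''$ is exactly the paper's $l''<0$ argument.

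\emph{On $[0,x_1)$} the claim fails in general. Fix $0<x_1<x_2$ and let $\delta_2$ be large. From $V_2(0)=0$ and the matching at $x_1$ (where $V_2(x_1)$ stays bounded) one gets $A_{2,1}=O(e^{-\theta_{2,1}x_1})$. Hence $A_{2,2}\approx -M/\delta_2$ and $V_2(x)\approx \frac{M}{\delta_2}(1-e^{-\theta_{2,2}x})$ near $0$, so $V_2'(0)\approx M\theta_{2,2}/\delta_2>0$. For the impatient component, surviving in the effort region just means paying effort for heavily discounted dividends. This is why the paper's proof must allow $V_2'>0$ somewhere in $[0,x_1)$. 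Since this term carries the larger factor $\delta_2$, the sum $\sum_i\omega_i\delta_iV_i'$ has no definite sign on $[0,x_1)$. So $e^{2(\mu+M)x/\sigma^2}V''$ need not be monotone there, and you get no conclusion exactly where the difficulty lies.

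The Descartes fallback does not close this. On $[0,x_1)$, $V''$ is an exponential sum with exponents $-\theta_{2,2}<-\theta_{1,2}<\theta_{1,1}<\theta_{2,1}$ and coefficient signs $(\mathrm{sgn}A_{2,2},\mathrm{sgn}A_{1,2},-,-)$. When $A_{2,2}<0<A_{1,2}$ (and $A_{2,2}<0$ is what the regime above produces), there are two sign changes. Then a zero of $V''$ inside $(0,x_1)$ is not excluded by the data $V''(x_2)=0$, $V'(x_1)=-1$, $V(0)=0$ that you list.

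\textbf{What the paper uses instead.} The paper's argument tolerates $V_2'>0$ and relies on two ingredients absent from your Step 3: the ordering $\delta_1<\delta_2$, and the individual conditions $V_i(0)=0$ (not just $V(0)=0$). Work in $z=\phi(x)$ with $m_i=V_i'\circ\phi^{-1}$, and let $z^*$ be the largest zero of $m'$ in $(0,\tilde z)$.
\begin{itemize}
\item $m''(z^*)\ge 0$ gives $\sum_i\omega_i\delta_iV_i'(x^*)\ge 0$, while $V'(x^*)<-1$. Together with $\delta_1<\delta_2$ this forces $V_2'(x^*)>0>V_1'(x^*)$.
\item Since $V_2'(x_1)<0$, $m_2$ has a zero. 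Lemma 4.1 of \citet{shreve1984optimal} then gives $m_2'<0$ on $[0,\tilde z)$; indeed $(m_2m_2')'=(m_2')^2+q\,m_2^2>0$ when $m_2''=q\,m_2$ with $q>0$ and $m_2\not\equiv 0$. Hence $\sum_i\omega_i\delta_im_i'<\delta_1 m'$.
\item This inequality propagates $\sum_i\omega_i\delta_iV_i'>0$ from a left neighbourhood of $x^*$ down to all of $[0,x^*)$. Integrating from $V_1(0)=V_2(0)=0$ gives $\sum_i\omega_i\delta_iV_i(x^*)>0$.
\item This contradicts the ODE at $x^*$, which reads $\sum_i\omega_i\delta_iV_i(x^*)=(\mu+M)V'(x^*)+M<-\mu$.
\end{itemize}
Without an argument of this kind, or a genuine proof that $\sum_i\omega_i\delta_iV_i'<0$ on $[0,x_1)$, convexity on $[0,x_1)$ is not established. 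The ordering of $V'$ that you derive from it in Step 4 is therefore not established either.
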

\begin{proof}
It is sufficient to show $V'(x) < -1$ when $0\le x < x_1$, $V'(x_1)=-1$, $-1 < V'(x) < -c$ when $x_1 < x < x_2$, $V'(x) =-c$ when $x\ge x_2$. 

For fixed $0 < x_1 < x_2$, it is easy to show 
\begin{equation}\nonumber\begin{aligned}
D_{i}=&\left(\mathrm{e}^{-\theta_{i,2} x_1}-\mathrm{e}^{\theta_{i,1} x_1}\right)\left(\theta_{i,3}\theta_{i,4} \mathrm{e}^{\theta_{i,3} x_1 - \theta_{i,4} x_2} - \theta_{i,3}\theta_{i,4} \mathrm{e}^{-\theta_{i,4} x_1 + \theta_{i,3} x_2}\right)\\&+
\left(\theta_{i,2}\mathrm{e}^{-\theta_{i,2} x_1}+\theta_{i,1}\mathrm{e}^{\theta_{i,1} x_1}\right)\left(\theta_{i,4} \mathrm{e}^{\theta_{i,3} x_1 - \theta_{i,4} x_2} + \theta_{i,3} \mathrm{e}^{-\theta_{i,4} x_1 + \theta_{i,3} x_2}\right)>0,\\
B_{i,1}=&\left(1-\mathrm{e}^{-\theta_{i,2} x_1}\right)\left(\theta_{i,3}\theta_{i,4} \mathrm{e}^{\theta_{i,3} x_1 - \theta_{i,4} x_2} - \theta_{i,3}\theta_{i,4} \mathrm{e}^{-\theta_{i,4} x_1 + \theta_{i,3} x_2}\right)\\&-\theta_{i,2}\theta_{i,4} \mathrm{e}^{(-\theta_{i,2} + \theta_{i,3}) x_1- \theta_{i,4} x_2} - \theta_{i,2}\theta_{i,3}  \mathrm{e}^{(-\theta_{i,2} - \theta_{i,4}) x_1 + \theta_{i,3} x_2} < 0,\end{aligned}\end{equation}
and $C_{i,1} < 0$,  thus $A_{i,1} < 0$. 

First, consider $V_i(x)$ for $x \in [x_2, +\infty)$. It is straightforward that $V^\prime_i(x) = -c < 0$ and $V^{\prime\prime}_i(x) =0$ for $x \in [x_2, +\infty)$. Therefore, $V^\prime(x) = -c < 0$ and $V^{\prime\prime}(x) =0$ for $x \in [x_2, +\infty)$. 

Second, for $x \in [x_1, x_2)$, since $V^\prime_i(x_2-) = V^\prime_i(x_2+)=-c < 0$, i.e., $V^\prime_i(x_2-)=A_{i,3} \theta_{i,3} \mathrm{e}^{\theta_{i,3} x_2} - A_{i,4} \theta_{i,4} \mathrm{e}^{-\theta_{i4} x_2} < 0$. We claim that $V^\prime_i(x) < 0$ for all $x \in [x_1, x_2)$ by considering different signs of $A_{i,3}$ and $A_{i,4}$.  

If $A_{i,3} \ge 0$, then in order to make $V^\prime_i(x_2-) < 0$, we can only have $A_{i,4} > 0$,  hence $V^{\prime\prime}_i(x) = A_{i,3} \theta_{i,3}^2 \mathrm{e}^{\theta_{i,3} x} + A_{i,4} \theta_{i,4}^2 \mathrm{e}^{-\theta_{i,4} x} > 0$ for all $x \in [x_1, x_2)$, indicating $V^\prime_i(x) < V^\prime_i(x_2-) < 0$, $\forall x \in [x_1, x_2)$. If $A_{i,3} < 0$ and $A_{i,4} \ge 0$, then $V^\prime_i(x)=A_{i,3} \theta_{i,3} \mathrm{e}^{\theta_{i,3} x} - A_{i,4} \theta_{i,4} \mathrm{e}^{-\theta_{i4} x} < 0, \forall x \in [x_1, x_2)$. If $A_{i,3} < 0$ and $A_{i,4} < 0$, suppose $V_i^\prime(x_1+) \ge 0$.  According to the smooth principle, $V_i^\prime(x_1-) = A_{i,1} \theta_{i,1} \mathrm{e}^{\theta_{i,1}x_1} - A_{i,2} \theta_{i,2} \mathrm{e}^{-\theta_{i,2}x_1}=V_i^\prime(x_1+) \ge 0$.
As $A_{i,1} < 0$,  $A_{i,2}$ can only be negative, resulting in $V_i^{\prime\prime}(x) = A_{i,1} \theta_{i,1}^2 \mathrm{e}^{\theta_{i,1}x} + A_{i,2} \theta_{i,2}^2 \mathrm{e}^{-\theta_{i,2}x} < 0$ for all $x \in [0, x_1)$.   Thus, $V_i^\prime(x) > V_i^\prime(x_1-)> 0$ for all $x \in [0, x_1)$,  leading to $V_i(0) < V_i(x_1-) = V_i(x_1+) = A_{i,3} \mathrm{e}^{\theta_{i,3}x_1} + A_{i,4} \mathrm{e}^{-\theta_{i,4}x_1} < 0$ which contradicts with the boundary condition $V_i(0) = 0$.  
 Therefore, $V_i^\prime(x_1+)<0$, and $V^{\prime\prime}_i(x) =  A_{i,3} \theta_{i,3}^2 \mathrm{e}^{\theta_{i,3} x} + A_{i,4} \theta_{i,4}^2 \mathrm{e}^{-\theta_{i,4} x} < 0$ for all $x \in [x_1, x_2)$, which confirms that $V_i^\prime(x) \le V_i^\prime(x_1+) < 0$, $\forall x \in [x_1, x_2)$.

Next, we show $V^{\prime\prime}(x) = \sum\limits_{i=1}^{2}\omega_i V^{\prime\prime}_i(x) > 0$, $\forall x \in [x_1, x_2)$. We know that $V^\prime(x_2-) = V^\prime(x_2+) = -c$ and $V^{\prime\prime}(x_2-)= 0$. Then we introduce a change of variable $\varphi(x) := \int_{x_1}^x \mathrm{e}^{\frac{-2 \mu a}{\sigma^2}} \mathrm{d}a$, $x \in [x_1, x_2)$,
and define $l(z) := \sum\limits_{i=1}^{2}\omega_il_i(z)$ with $l_i(z) := V_i^\prime(\varphi^{-1}(z))$, $z\in[0,\bar z)$, where $\bar z = \int_{x_1}^{x_2} \mathrm{e}^{\frac{-2 \mu a}{\sigma^2}} \mathrm{d}a$. We know that $l_i(z)=V_i^{\prime}(\varphi^{-1}(z))<0, \forall z\in[0,\bar z)$. According to ODEs \eqref{eq:Vi_ODE}, we have $l_i(z)$ is a solution to the follow ODE:
\begin{equation}\nonumber\begin{aligned}
 l_i^{\prime\prime}(z) =\left[\varphi^\prime(\varphi^{-1}(z))\right]^{-2} \frac{2 \delta_i}{\sigma^2} l_i(z)<0,\ z\in[0,\bar z).
\end{aligned}\end{equation}
Hence,
 \begin{equation}\nonumber\begin{aligned}
l^{\prime\prime}(z) = \sum\limits_{i=1}^{2}\omega_il_i^{\prime\prime}(z)
= \left[\varphi^\prime(\varphi^{-1}(z))\right]^{-2} \frac{2}{\sigma^2} \sum\limits_{i=1}^{2}\omega_i \delta_i l_i(z)<0,\ z\in[0,\bar z).
\end{aligned}\end{equation}
Therefore, $l^{\prime}(z)>l^{\prime}(\bar z-)=\sum\limits_{i=1}^{2}\omega_il_i^{\prime}(\bar z-)=\left[\varphi^\prime(\varphi^{-1}(\bar z-))\right]^{-1}\sum\limits_{i=1}^{2}\omega_iV_i^{\prime\prime}(\varphi^{-1}(\bar z-))=\left[\varphi^\prime(\varphi^{-1}(\bar z-))\right]^{-1}V^{\prime\prime}(x_2-)=0, \forall z\in[0,\bar z)$. Since $l^{\prime}(z)=\left[\varphi^\prime(\varphi^{-1}(z))\right]^{-1}V^{\prime\prime}(\varphi^{-1}(z))$, we have $V^{\prime\prime}(x)>0, \forall x \in [x_1, x_2)$. Therefore, $-1<V^{\prime}(x)<-c, \forall x \in (x_1, x_2)$.

Finally, for $x \in [0, x_1)$, it is already known that $V_i(0) = 0$, $V^\prime_i(x_1-) = V^\prime_i(x_1+) < 0$, and $V^\prime(x_1-) = V^\prime(x_1+) =-1$. Besides, according to ODEs \eqref{eq:Vi_ODE}, we have
\begin{equation}\nonumber\begin{aligned}
&V^{\prime\prime}(x_1-) =\sum\limits_{i=1}^{2}\omega_i V^{\prime\prime}_i(x_2-)= \frac{2}{\sigma^2} \left[-(\mu+M) V^\prime(x_1-) - M + \sum\limits_{i=1}^{2}\omega_i\delta_iV_i(x_1-)\right]\\
&= \frac{2}{\sigma^2} \left[-\mu V^\prime(x_1+) + \sum\limits_{i=1}^{2}\omega_i\delta_iV_i(x_1+)\right]= \sum\limits_{i=1}^{2}\omega_iV_i^{\prime\prime}(x_1+) = V^{\prime\prime}(x_1+) > 0.
\end{aligned}\end{equation}

Similarly, we introduce a change of variable $\phi(x) := \int_{0}^x \mathrm{e}^{\frac{-2 (\mu+M) a}{\sigma^2}} \mathrm{d}a$, $x \in [0, x_1)$,
and define $m(z) := \sum\limits_{i=1}^{2}\omega_im_i(z)$ with $m_i(z) := V_i^\prime(\phi^{-1}(z))$, $z\in[0,\tilde z)$, where $\tilde z = \int_{0}^{x_1} \mathrm{e}^{\frac{-2 (\mu+M) a}{\sigma^2}} \mathrm{d}a$. According to ODEs \eqref{eq:Vi_ODE}, we have $m_i(z)$ is a solution to the follow ODE:
\begin{equation}\nonumber\begin{aligned}
 m_i^{\prime\prime}(z) =\left[\phi^\prime(\phi^{-1}(z))\right]^{-2} \frac{2 \delta_i}{\sigma^2} m_i(z),\ z\in[0,\tilde z).
\end{aligned}\end{equation}
Hence,
 \begin{equation}\nonumber\begin{aligned}
m^{\prime\prime}(z) = \sum\limits_{i=1}^{2}\omega_im_i^{\prime\prime}(z)
= \left[\phi^\prime(\phi^{-1}(z))\right]^{-2} \frac{2}{\sigma^2} \sum\limits_{i=1}^{2}\omega_i \delta_i m_i(z),\ z\in[0,\tilde z).
\end{aligned}\end{equation}
We know $m^{\prime}(\tilde z-)=\sum\limits_{i=1}^{2}\omega_im_i^{\prime}(\tilde z-)=\left[\phi^\prime(\phi^{-1}(\tilde z-))\right]^{-1}\sum\limits_{i=1}^{2}\omega_iV_i^{\prime\prime}(\phi^{-1}(\tilde z-))=\left[\phi^\prime(\phi^{-1}(\tilde z-))\right]^{-1}V^{\prime\prime}(x_1-)=\left[\phi^\prime(\phi^{-1}(\tilde z-))\right]^{-1}V^{\prime\prime}(x_1+)>0$. $m_i(\tilde z-)=V_i^\prime(x_1-)<0$, $m(\tilde z-)=V^\prime(x_1-)=-1<0$.

Suppose there exists $z^* \in (0, \tilde z)$ such that $m^\prime(z^*) = 0$ and $m^\prime(z) > 0$ for $z \in (z^*, \tilde z)$.  Then $m^{\prime\prime}(z^*) \ge 0$, i.e., $m^{\prime\prime}(z^*)=\left[\phi^\prime(\phi^{-1}(z^*))\right]^{-2} \frac{2}{\sigma^2} \sum\limits_{i=1}^{2}\omega_i \delta_i m_i(z^*)\ge 0$, i.e., $\sum\limits_{i=1}^{2}\omega_i \delta_i m_i(z^*)\ge 0$. Since $m^\prime(z) > 0$ for $z \in (z^*, \tilde z)$, we have $m(z^*)<m(\tilde z-)=V^\prime(x_1-)<0$, i.e., $\sum\limits_{i=1}^{2}\omega_i m_i(z^*)< 0$. Recall that $\delta_1<\delta_2$, we have $m_2(z^*)>0$ and $m_1(z^*)<0$.  Combining with the fact that $m_2(\tilde z-)<0$, we know that there exists $z_0\in(z^*, \tilde z)$ such that $m_2(z_0)=0$. Based on \citet[Lemma 4.1]{shreve1984optimal}, $m^\prime_2$ has no zero point in $[0, \tilde z)$, thus $m^\prime_2(z)<0, \forall z\in[0, \tilde z)$. Therefore,
\begin{equation}\label{ineq:lprime}
\sum\limits_{i=1}^{2}\omega_i \delta_i m^\prime_i(z)<\sum\limits_{i=1}^{2}\omega_i \delta_1 m^\prime_i(z)=\delta_1 m^\prime(z), \ \forall z\in[0, \tilde z).
\end{equation}
Thus, $\sum\limits_{i=1}^{2}\omega_i \delta_i m^\prime_i(z^*)<\delta_1 m^\prime(z^*)=0$, and there exists $\epsilon>0$ such that $\sum\limits_{i=1}^{2}\omega_i \delta_i m_i(z)>0, \forall (z^*-\epsilon,z^*)$.

We then claim that $\sum\limits_{i=1}^{2}\omega_i \delta_i m_i(z)>0$, i.e., $m^{\prime\prime}(z) > 0, \forall z\in[0,z^*)$. Otherwise, there exists $z_1\in[0,z^*)$ such that $m^{\prime\prime}(z_1)=0$ and $m^{\prime\prime}(z)>0, \forall z\in(z_1,z^*)$.  Consequently, $m^{\prime}(z_1) < m^{\prime}(z^*) = 0$, and  \eqref{ineq:lprime} indicates that  $\sum\limits_{i=1}^{2}\omega_i \delta_i m^\prime_i(z_1)<\delta_1 m^\prime(z_1)<0$. On the other hand, since $\sum\limits_{i=1}^{2}\omega_i \delta_i m_i(z_1)=0$ and $\sum\limits_{i=1}^{2}\omega_i \delta_i m_i(z)>0, \forall z\in(z_1,z^*)$, we have $\sum\limits_{i=1}^{2}\omega_i \delta_i m^\prime_i(z_1)\ge 0$, which is a contradiction.

Since $V^{\prime\prime}(x^*)=\phi^\prime(\phi^{-1}(z^*))m^{\prime}(z^*)=0$ and $V^{\prime}(x^*)=m(z^*)<m(\tilde z-)=V^{\prime}(x_1-)=-1$, where $x^* = \phi^{-1}(z^*)$, combining with \eqref{eq:Vi_ODE}, we have
\begin{equation}\label{ineq:v_x*}
\sum\limits_{i=1}^{2}\omega_i \delta_i V_i(x^*) = \frac{1}{2} \sigma^2 V^{\prime\prime}(x^*) + (\mu + M) V^{\prime}(x^*) + M < -\mu<0.
\end{equation}
However, $\sum\limits_{i=1}^{2}\omega_i \delta_i V_i^{\prime}(x)=\sum\limits_{i=1}^{2}\omega_i \delta_i m_i(\phi(x))>0, \forall x\in[0,x^*)$, resulting in $\sum\limits_{i=1}^{2}\omega_i \delta_i V_i(x^*)>\sum\limits_{i=1}^{2}\omega_i \delta_i V_i(0)=0$, which contradicts with \eqref{ineq:v_x*}. Therefore, there cannot exist such $z^* \in (0, \tilde z)$, and $m^\prime(z) > 0$ for all $z \in (0, \tilde z)$, i.e., $V^{\prime\prime}(x) > 0, \forall x\in(0,x_1)$. Thus, $V^{\prime}(x)<V^{\prime}(x_1-)=-1, \forall x\in[0,x_1)$. 

Hence, $V$ is decreasing and convex on $[0,+\infty)$ and \eqref{fv} and \eqref{Vi} are a solution to the extended HJB system \eqref{extendedhjb} under the discount function \eqref{eq:mix_exp_discount}.
\end{proof}

\begin{remark}\label{differ}
Suppose there exist  $0 < x_1 < x_2$  that solve \eqref{eq:x1x2}. Although we have the solution \eqref{fv} and \eqref{Vi} satisfy
\begin{equation}
\begin{aligned}
f\in &C^{1,1,1}\left(\left\{(x,t,s)\mid (x,t)\in\mathcal{\tilde{Q}}, s\in [0,t]\right\}\right)\\&\bigcap C^{2,1,1}\left(\left\{(x,t,s)\mid (x,t)\in\mathcal{\tilde{Q}},x\in[0,x_1)\cap(x_1,x_2)\cap(x_2,+\infty), s\in [0,t]\right\}\right),\end{aligned}\nonumber
\end{equation}
if we use $f_{xx}(x_1-,t,s)$ ($f_{xx}(x_2-,t,s)$) when $x=x_1$ ($x=x_2$) in the Verification Theorem \ref{verificationthm} and the Mild Verification Theorem \ref{mildverificationthm}, the It\^{o}--Tanaka--Meyer formula still holds (see \citet[Theorem 3.1 and Remark 3.3]{peskir2007change}), thus the proof remains valid. 
\end{remark}

Having obtained the solution to the extended HJB system, we verify the equilibrium conditions in the following proposition.
\begin{proposition}\label{equilver}
Suppose there exist  $0 < x_1 < x_2$ that solve \eqref{eq:x1x2}. Then, the candidate regular-singular control law $\left(\hat\Pi,\hat\Xi\right)$ defined in \eqref{hatpi} and \eqref{hatxi} is an equilibrium regular-singular control law	  and $V$ in \eqref{fv} and \eqref{Vi} is the corresponding equilibrium value function for the objective \eqref{objexmple} under the discount function \eqref{eq:mix_exp_discount}. In addition, $\left(\hat\Pi,\hat\Xi\right)$ is a mild equilibrium.
 \end{proposition}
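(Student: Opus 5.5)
We verify the hypotheses of Theorem \ref{verificationthm} (and then of Theorem \ref{mildverificationthm}) for the time-homogeneous data $V(x)=\sum_i\omega_iV_i(x)$, $f(x,t,s)=\sum_i\omega_i\mathrm{e}^{-\delta_i(t-s)}V_i(x)$, with the variable $y$ suppressed.

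\textbf{Conditions (1)--(3).} By Proposition \ref{convex1}, these functions solve the extended HJB system \eqref{extendedhjb}. Regularity (1) holds in the piecewise sense of Remark \ref{differ}: $V$ and $f$ are $C^1$ in $x$, $C^2$ away from $x_1,x_2$, and $f_{xx}(x_i-,\cdot)$ is used at the kinks. The Itô--Tanaka--Meyer steps remain valid by \citet{peskir2007change}.

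For (2), convexity and monotonicity give $-c-V'(x)>0$ exactly on $[0,x_2)$ and $=0$ on $[x_2,\infty)$, so $\hat\Xi$ in \eqref{hatxi} matches the prescribed regions. The map $u\mapsto (\mu+u)V'+u$ is minimized at $u=M$ when $V'<-1$ and at $u=0$ when $V'>-1$. Hence $\hat\Pi$ in \eqref{hatpi} is an argmin on $\mathcal{\tilde Q}$; at $x_1$, where $V'=-1$, any $u$ works. The boundary $\overline{W^{\hat\Xi}}-W^{\hat\Xi}=\{x_2\}$, and $\hat\Pi\equiv0$ near $x_2$, so $\hat\Pi$ is continuous there.

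Admissibility of $(\hat\Pi,\hat\Xi)$ means the Skorokhod problem has a unique strong solution. If $x>x_2$, there is an initial jump to $x_2$. Afterwards the process is a diffusion with bounded piecewise-constant drift and constant $\sigma$, reflected at $x_2$ and killed at $0$. Pathwise uniqueness follows from Zvonkin/Veretennikov-type results for bounded measurable drift, combined with the explicit Skorokhod map. Integrability (b) holds because effort is bounded by $M$ and $\mathbb{E}\int\beta\,\mathrm{d}\hat\xi<\infty$; the latter follows from Itô's formula applied to $\mathrm{e}^{-\delta_1 r}g(X_r)$ for a suitable linear-growth $g$.

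\textbf{Conditions (4)--(6).} For (4), $V'$, $f_x$ and $\sigma$ are bounded, since $V_i'$ is bounded on each piece and $\sigma$ is constant, so \eqref{martingale} is immediate. For (5), $\hat X\in[0,x_2]$ after time $t$, so $V$ and $f^s$ are bounded along the path. Moreover $f^s(\hat X_{\tau_n},\tau_n,s)$ carries the factor $\mathrm{e}^{-\delta_i(\tau_n-s)}$, which tends to $0$ on $\{\tau=\infty\}$, while $V=f=0$ on $\{\tau<\infty\}$ at $\tau_n=\tau$ for large $n$. Thus \eqref{finfty1} follows by dominated convergence. For \eqref{finfty2}, note that $V(x)-f(x,t,t)=0$ identically.

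Condition (6) is the main obstacle, because perturbations $(u,\eta)$ are arbitrary and $X^{u,\eta}$ is not confined to $[0,x_2]$. \eqref{martingale2} holds since $V'$ is bounded ($V'\in[V'(0),-c]$) and $\sigma$ is constant.

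For \eqref{integrability}, $f_s(x,t,s)=\sum_i\omega_i\delta_i\mathrm{e}^{-\delta_i(t-s)}V_i(x)$ has at most linear growth in $x$, and $H=u\le M$. The supremum is therefore bounded by $C(1+\sup_{r\le t+h_0}|X^{u,\eta}_r|)$. Since $0\le X^{u,\eta}_r\le x+(\mu+M)h_0+\sigma\sup|B_r-B_t|$, this has finite expectation.

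For \eqref{integrability2}, $c$ is constant and $\beta$ is bounded, so the condition is trivial. The Fatou arguments \eqref{est2}--\eqref{est3} then apply directly, and Theorem \ref{verificationthm} gives the equilibrium and identifies $V$ as its value function.

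\textbf{Mild equilibrium.} The cost term $-c\,\mathrm{d}\xi$ is non-positive, and \eqref{mildpi} is weaker than the argmin condition verified in (2). The conditions of Theorem \ref{mildverificationthm} are a subset of those already checked, since its item (6) concerns special perturbation pairs covered by the same bounds. Hence $(\hat\Pi,\hat\Xi)$ is also a mild equilibrium.
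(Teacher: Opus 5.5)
Your proposal is correct and follows the paper's proof: you verify conditions (1)--(6) of Theorem \ref{verificationthm} using Proposition \ref{convex1}, Remark \ref{differ}, the confinement $\hat X_r\in[0,x_2]$, and the bound $X^{u,\eta}_r\le x+(\mu+M)(r-t)+\sigma(B_r-B_t)$, and then transfer the argument to Theorem \ref{mildverificationthm}. The differences are only in sub-steps, and all of them are valid. You bound $\mathbb{E}\int\beta\,\mathrm{d}\hat\xi$ by It\^o's formula for $\mathrm{e}^{-\delta_1(r-t)}\hat X_r$, which gives the bound $x+(\mu+M)/\delta_1$, where the paper sums over unit time intervals. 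You use linear growth together with $\mathbb{E}\sup_{r\le t+h_0}|B_r-B_t|<\infty$ where the paper uses exponential moments. For the reflected SDE you use a Zvonkin-type transformation where the paper cites Zhang's theorem.
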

\begin{proof}
 We verify that each condition of Theorem \ref{verificationthm} holds to complete the proof.
 
 Conditions (1) and (3) in Theorem \ref{verificationthm} are demonstrated in Proposition \ref{convex1} and Remark \ref{differ}. To verify condition (2) in Theorem \ref{verificationthm}, it is sufficient to show $\left(\hat\Pi,\hat\Xi\right)$ defined in \eqref{hatpi} and \eqref{hatxi} is an admissible regular-singular control law. For any $(x,t)\in \mathcal{\tilde{Q}}$, since the drift term in the Skorokhod reflection type SDE associated with $\left(\hat\Pi,\hat\Xi\right)$ is not (Lipschitz) continuous w.r.t. state variable $x$, we can't use some classic existence and uniqueness results for the Skorokhod reflection problem. Fortunately, since the drift term is bounded, thanks to \citet[Theorem 3.1]{ZHANG1994135}, the Skorokhod reflection type SDE associated with $\left(\hat\Pi,\hat\Xi\right)$ exists a unique strong solution. Beside, we have 
\begin{equation}
\begin{aligned}
\mathbb{E}_{x, t}\left[\int_t^{\tau^{x,t;\hat\Pi,\hat\Xi}_{t}} \beta(r-t) \left|\hat\pi_r\right| \mathrm{d} r\right]\le M\mathbb{E}_{x, t}\left[\int_t^{+\infty} \beta(r-t)  \mathrm{d} r\right]=M\sum\limits_{i=1}^{2}\left(\omega_i\int_t^{+\infty} \mathrm{e}^{-\delta_i (r-t)}  \mathrm{d} r\right)<+\infty. 
\end{aligned}\nonumber
\end{equation}
Furthermore, we know that

\begin{equation}
\begin{aligned}
&\hat\xi_{\tau^{x,t;\hat\Pi,\hat\Xi}_{t}\wedge(r+1)}-\hat\xi_{(\tau^{x,t;\hat\Pi,\hat\Xi}_{t}\wedge r)-}\\
=&\hat{X}_{(\tau^{x,t;\hat\Pi,\hat\Xi}_{t}\wedge r)-}-\hat{X}_{\tau^{x,t;\hat\Pi,\hat\Xi}_{t}\wedge(r+1)}+\int^{\tau^{x,t;\hat\Pi,\hat\Xi}_{t}\wedge(r+1)}_{\tau^{x,t;\hat\Pi,\hat\Xi}_{t}\wedge r}\left(\mu+\hat\pi_l\right)\mathrm{d}l+\sigma\left(B_{\tau^{x,t;\hat\Pi,\hat\Xi}_{t}\wedge (r+1)}-B_{\tau^{x,t;\hat\Pi,\hat\Xi}_{t}\wedge r}\right)\\
\le &2\max\{x,x_2\}+\mu+M +\sigma \left(B_{\tau^{x,t;\hat\Pi,\hat\Xi}_{t}\wedge (r+1)}-B_{\tau^{x,t;\hat\Pi,\hat\Xi}_{t}\wedge r}\right), \ \forall r\ge t.
\end{aligned}\nonumber
\end{equation}
 Therefore, by the monotone convergence theorem, let $K=2\max\{x,x_2\}+\mu+M $, we have
\begin{equation}
\begin{aligned}
&\mathbb{E}_{x, t}\left[\int_t^{\tau^{x,t;\hat\Pi,\hat\Xi}_{t}} \mathrm{e}^{-\delta_i (r-t)}  \mathrm{d} \hat\xi_r\right]= \sum\limits_{n=0}^{+\infty}\mathbb{E}_{x, t}\left[\int^{\tau^{x,t;\hat\Pi,\hat\Xi}_{t}\wedge((t+n+1)-)}_{\tau^{x,t;\hat\Pi,\hat\Xi}_{t}\wedge(t+n)} \mathrm{e}^{-\delta_i (r-t)}  \mathrm{d} \hat\xi_r\right]\\
\le &\sum\limits_{n=0}^{+\infty}\mathbb{E}_{x, t}\left[\int^{\tau^{x,t;\hat\Pi,\hat\Xi}_{t}\wedge(t+n+1)}_{\tau^{x,t;\hat\Pi,\hat\Xi}_{t}\wedge(t+n)} \mathrm{e}^{-\delta_i n}  \mathrm{d} \hat\xi_r\right]
= \sum\limits_{n=0}^{+\infty}\mathrm{e}^{-\delta_i n}\mathbb{E}_{x, t}\left[\hat\xi_{\tau^{x,t;\hat\Pi,\hat\Xi}_{t}\wedge(t+n+1)}-\hat\xi_{(\tau^{x,t;\hat\Pi,\hat\Xi}_{t}\wedge(t+n))-}\right]\\  
\le &\sum\limits_{n=0}^{+\infty}\mathrm{e}^{-\delta_i n}\left\{K+\sigma \mathbb{E}_{x, t}\left[\left(B_{\tau^{x,t;\hat\Pi,\hat\Xi}_{t}\wedge (r+1)}-B_{\tau^{x,t;\hat\Pi,\hat\Xi}_{t}\wedge r}\right)\right]\right\}
=K\sum\limits_{n=0}^{+\infty}\mathrm{e}^{-\delta_i n}<+\infty.
\end{aligned}\nonumber
\end{equation} 
Thus, 
\begin{equation}
\begin{aligned}
\mathbb{E}_{x, t}\left[\int_t^{\tau^{x,t;\hat\Pi,\hat\Xi}_{t}} \beta(r-t)  \mathrm{d} \hat\xi_r\right]=\sum\limits_{i=1}^{2}\omega_i\mathbb{E}_{x, t}\left[\int_t^{\tau^{x,t;\hat\Pi,\hat\Xi}_{t}}  \mathrm{e}^{-\delta_i (r-t)} \mathrm{d} \hat\xi_r\right]<+\infty.
\end{aligned}\nonumber
\end{equation}
Hence, $\left(\hat\Pi,\hat\Xi\right)$ defined in \eqref{hatpi} and \eqref{hatxi} is an admissible regular-singular control law and condition (2) in Theorem \ref{verificationthm} holds.
 
For any $(x,t)\in \mathcal{\tilde{Q}}$, since $\hat{X}_r\in[0,x_2], \forall r\in[t,\tau^{x,t;\hat\Pi,\hat\Xi}_{t}]$, we know that $V_x(\hat{X}_r)$ and  $f_x(\hat{X}_r,r,s)$ are uniformly bounded for all $r\in[t,\tau^{x,t;\hat\Pi,\hat\Xi}_{t}], s\in[0,t]$, which indicates that condition (4) in Theorem \ref{verificationthm} holds.
 
 For any $(x,t)\in \mathcal{\tilde{Q}}$, using the same argument as above, we know that $f(\hat{X}_r,r,s)$ are uniformly bounded for all $r\in[t,\tau^{x,t;\hat\Pi,\hat\Xi}_{t}], s\in[0,t]$ and $\lim\limits_{n\rightarrow +\infty}f(\hat{X}_{\tau_n},\tau_n,s)=0, \forall  s\in[0,t]$, where $\tau_n=\tau^{x,t;\hat\Pi,\hat\Xi}_{t}\wedge n$. Therefore, using the dominated convergence theorem, we have $\lim\limits_{n\rightarrow+\infty}\mathbb{E}_{x,t}f\left(\hat{X}_{\tau_{n}},\tau_{n},s\right)=0$. Besides, it is easy to show that $\lim\limits_{n\rightarrow+\infty}\mathbb{E}_{x,t}\left[V\left(\hat{X}_{\tau_{n}},\tau_{n}\right)-f\left(\hat{X}_{\tau_{n}},\tau_{n},\tau_{n}\right)\right]=\lim\limits_{n\rightarrow+\infty}0=0$. Thus, condition (5) in Theorem \ref{verificationthm} holds.

For any $(x,t)\in\mathcal{\tilde{Q}}$, any admissible pair of perturbations $(u,\eta)\in\mathcal{\tilde D}^{x,t;\hat\Pi,\hat\Xi}$, we can choose any $h_0>0$. Since
\begin{equation}
\begin{aligned}
X^{u,\eta}_{r}=&x+\int_{t}^{r}\left(\mu+u_l\right)\mathrm{d} l+\sigma (B_r-B_t)-\int_{t}^{r}\mathrm{d} \eta_l
\le x+(\mu+M)(r-t)+\sigma (B_r-B_t)\\
\le& x+(\mu+M)h_0+\sigma \max\limits_{r\in[t,t+h_0]}\{B_r-B_t\},\ \forall r\in[t,\tau^{x,t;u,\eta}_{t}\wedge (t+h_0)],
\end{aligned}\nonumber
\end{equation}
and based on the properties of Brownian motion, for any $k\in \mathbb{R}$, we have $\mathbb{E}_{x,t}\left[\mathrm{e}^{k\max\limits_{r\in[t,t+h_0]}\{B_r-B_t\}}\right]<+\infty$. Therefore, we know
\begin{equation}
\begin{aligned}
\mathbb{E}_{x,t}\int_{t}^{\tau^{x,t;u,\eta}_{t}\wedge(t+h_0)}\left(V^{\prime}\left(X^{u,\eta}_{r}\right)\right)^{2}\mathrm{d}r\le h_0\mathbb{E}_{x,t}\left[K_1\mathrm{e}^{K_2\max\limits_{r\in[t,t+h_0]}\{B_r-B_t\}}\right]<+\infty,
\end{aligned}\nonumber
\end{equation}
 where $K_1, K_2>0$ are some constants. Besides, we have
\begin{equation}
\begin{aligned}
&\mathbb{E}_{x,t}\left[\sup\limits_{\substack{r\in(t,\tau^{x,t;u,\eta}_{t}\wedge(t+h_{0}))\\ h\in(0,h_{0})}}\left|f_s\left(X^{u,\eta}_{r},r,r\right)+(\beta(r-t)-1) u_r-f_s\left(X^{u,\eta}_{\tau^{x,t;u,\eta}_{t}\wedge((t+h)-)},\tau^{x,t;u,\eta}_{t}\wedge(t+h),r\right)\right|\right]\\
\le& \mathbb{E}_{x,t}\left[K_3\mathrm{e}^{K_4\max\limits_{r\in[t,t+h_0]}\{B_r-B_t\}}\right]<+\infty,
\end{aligned}\nonumber
\end{equation}
where $K_3, K_4>0$ are some constants. Furthermore, it is easy to show that
\begin{equation}
\begin{aligned}
&\mathbb{E}_{x,t}\left[\sup\limits_{r\in(t,\tau^{x,t,y;u,\eta}_{t}\wedge(t+h_{0}))}c\left|(\beta(r-t)-1)\right|\right]<+\infty.
\end{aligned}\nonumber
\end{equation}
Hence, condition (6) in Theorem \ref{verificationthm} holds.

In addition, we can verify that each condition of Theorem \ref{mildverificationthm} holds similarly. Thus, $\left(\hat\Pi,\hat\Xi\right)$ is a mild equilibrium.
\end{proof}

Proposition \ref{equilver} shows, if there exist  $0 < x_1 < x_2$ that solve \eqref{eq:x1x2},  the equilibrium regular-singular control law is to apply the maximal level of effort when the surplus is below the effort threshold $x_1$ and to pay the dividend once the surplus exceeds the dividend threshold $x_2$.

Suppose there not exist  $0 < x_1 < x_2$  that solve \eqref{eq:x1x2}. Then we set $x_1=0$, and we assume that there exists $ 0< x_2$ such that $-1\le V'(x) < -c$ when $0\le x < x_2$, $V'(x) =-c$ when $x\ge x_2$. Then the candidate regular-singular control law $\left(\hat\Pi,\hat\Xi\right)$ is defined as 
\begin{equation}\label{hatpideg}
\hat \Pi(x,t) := 0,\ \forall (x,t)\in\mathcal{\tilde{Q}},
\end{equation}
and
\begin{equation}
\begin{aligned}
&W^{\hat{\Xi}}:=\left\{(x,t)\in\mathcal{\tilde{Q}}|0\leq x< x_2\right\},\\
&P^{\hat{\Xi}}:=\left\{(x,t)\in\mathcal{\tilde{Q}}|x\ge x_2\right\}.
\end{aligned}\label{hatxideg}
\end{equation}
For fixed $0< x_2$, suppose that $V_i(x)$ is given by the system of ODEs
\begin{equation}\label{eq:Vi_ODE2}
\begin{cases}
\frac{1}{2} \sigma^2 V^{\prime\prime}_i(x) + \mu V^\prime_i(x) - \delta_i V_i(x) = 0, \qquad & x \in [0, x_2),\\
-c- V^\prime_i(x) = 0, & x \in [x_2, +\infty),\\
V_i(0)=0.
\end{cases}
\end{equation}
Thus a general solution to the ODEs \eqref{eq:Vi_ODE2} has the form
\begin{equation}\label{Videg}
V_i(x) = \left\{\begin{aligned}
& \tilde{A}_{i,3} \mathrm{e}^{\theta_{i,3} x} + \tilde{A}_{i,4} \mathrm{e}^{-\theta_{i,4}x}, & x \in [0, x_2),\\
& -cx + \tilde{A}_{i,5}, \qquad & x \in [x_2, +\infty),
\end{aligned}\right.
\end{equation}
where $\tilde{A}_{i,j}$ are the constants to be determined.

Now to determine the values of $\tilde{A}_{i,j}$, we use the boundary condition and smooth fit principle, that is,
\begin{equation}\nonumber
V_i(0) = 0,\ V_i(x_2 -) = V_i(x_2+),\ V^\prime_i(x_2 -) = V^\prime_i(x_2+).
\end{equation}
Then $\tilde{A}_{i,j}$ satisfy the following linear equation system:
\begin{equation}\nonumber
\left\{\begin{aligned}
& \tilde{A}_{i,3} + \tilde{A}_{i,4} = 0, \\
& \tilde{A}_{i,3} \mathrm{e}^{\theta_{i,3}x_2} + \tilde{A}_{i,4} \mathrm{e}^{-\theta_{i,4}x_2} = -cx_2 + \tilde{A}_{i,5},\\
& \tilde{A}_{i,3}\theta_{i,3} \mathrm{e}^{\theta_{i,3}x_2} - \tilde{A}_{i,4} \theta_{i,4}\mathrm{e}^{-\theta_{i,4}x_2}  = -c.
\end{aligned}\right.
\end{equation}
Solving the above system of linear equations, we have

\begin{equation}\nonumber
\begin{aligned}
 \tilde{A}_{i,3} =- \tilde{A}_{i,4}=-\frac{c}{\theta_{i,3} \mathrm{e}^{\theta_{i,3}x_2}+\theta_{i,4}\mathrm{e}^{-\theta_{i,4}x_2}},\end{aligned}
\end{equation}
and $\tilde{A}_{i,5}=cx_2+\tilde{A}_{i,3} \mathrm{e}^{\theta_{i,3}x_2} + \tilde{A}_{i,4} \mathrm{e}^{-\theta_{i,4}x_2}$.

It remains to determine the value of $x_2$, which are solved from $V^{\prime\prime}(x_2-) = 0$, that is,
\begin{equation}\label{eq:x1x22}
\sum\limits_{i=1}^{2}\omega_i\left(\tilde{A}_{i,3}\theta_{i,3}^2 \mathrm{e}^{\theta_{i,3}x_2} + \tilde{A}_{i,4} \theta_{i,4}^2\mathrm{e}^{-\theta_{i,4}x_2}\right) = 0,
\end{equation}
i.e.,
\begin{equation}\label{eq:x1x23}
\sum\limits_{i=1}^{2}\omega_i\left[-\frac{c}{\theta_{i,3} \mathrm{e}^{\theta_{i,3}x_2}+\theta_{i,4}\mathrm{e}^{-\theta_{i,4}x_2}}\left(\theta_{i,3}^2 \mathrm{e}^{\theta_{i,3}x_2} - \theta_{i,4}^2\mathrm{e}^{-\theta_{i,4}x_2}\right)\right] = 0.
\end{equation}
Let $F(x):=\sum\limits_{i=1}^{2}\omega_i\left[-\frac{c}{\theta_{i,3} \mathrm{e}^{\theta_{i,3}x}+\theta_{i,4}\mathrm{e}^{-\theta_{i,4}x}}\left(\theta_{i,3}^2 \mathrm{e}^{\theta_{i,3}x} - \theta_{i,4}^2\mathrm{e}^{-\theta_{i,4}x}\right)\right]$, it is easy to see $F(0)=\frac{2c\mu}{\sigma^2}>0$ and $F(+\infty)=-c\sum\limits_{i=1}^{2}\omega_i\theta_{i,3}<0$, $F^{\prime}(x)<0, \forall x\in [0,+\infty)$. Thus, \eqref{eq:x1x23} (i.e., \eqref{eq:x1x22}) has a unique solution $x_2>0$.

\begin{proposition}\label{cdeg}
Suppose $0<c\le \frac{1}{\sum\limits_{i=1}^{2}\omega_i\frac{\theta_{i,3}+\theta_{i,4}}{\theta_{i,3} \mathrm{e}^{\theta_{i,3}x_2}+\theta_{i,4}\mathrm{e}^{-\theta_{i,4}x_2}}}$	, where $x_2$ is the unique positive solution to \eqref{eq:x1x23}. Then, \eqref{fv} and \eqref{Videg} are a solution to the extended HJB system \eqref{extendedhjb} under the discount function \eqref{eq:mix_exp_discount}. In addition, the solution $V$ is decreasing and convex on $[0,+\infty)$. Furthermore, the candidate regular-singular control law $\left(\hat\Pi,\hat\Xi\right)$ defined in \eqref{hatpideg} and \eqref{hatxideg} is an equilibrium regular-singular control law and $V$ in \eqref{fv} and \eqref{Videg} is the corresponding equilibrium value function for the objective \eqref{objexmple} under the discount function \eqref{eq:mix_exp_discount}. Besides, $\left(\hat\Pi,\hat\Xi\right)$ is a mild equilibrium.
\end{proposition}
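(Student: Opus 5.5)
The plan is to check that the explicit functions \eqref{fv} and \eqref{Videg} satisfy every requirement of Theorem \ref{verificationthm}, and likewise of Theorem \ref{mildverificationthm}, following the structure of Propositions \ref{convex1} and \ref{equilver}. Everything depends on three facts:
\begin{equation}\nonumber
-1\le V'(x)<-c \ \text{ on } [0,x_2),\qquad V''(x)>0 \ \text{ on } [0,x_2),\qquad V'(x)=-c \ \text{ on } [x_2,+\infty).
\end{equation}

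\textbf{Step 1: sign and convexity of $V$.} Since $\tilde A_{i,3}=-\tilde A_{i,4}<0$, on $[0,x_2)$ we have
\begin{equation}\nonumber
V_i'(x)=\tilde A_{i,3}\left(\theta_{i,3}\mathrm{e}^{\theta_{i,3}x}+\theta_{i,4}\mathrm{e}^{-\theta_{i,4}x}\right)<0 .
\end{equation}
I then reuse the change of variable from Proposition \ref{convex1}, namely $\varphi(x)=\int_0^x \mathrm{e}^{-2\mu a/\sigma^2}\mathrm{d}a$, and set $l_i(z)=V_i'(\varphi^{-1}(z))$ and $l=\sum_i\omega_i l_i$. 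This gives
\begin{equation}\nonumber
l''(z)=\left[\varphi'(\varphi^{-1}(z))\right]^{-2}\frac{2}{\sigma^2}\sum_i\omega_i\delta_i l_i(z)<0 .
\end{equation}
Because $l'(\bar z-)$ is a positive multiple of $V''(x_2-)$, and $V''(x_2-)=0$ by \eqref{eq:x1x22}, it follows that $l'>0$, i.e. $V''>0$ on $[0,x_2)$. Hence $V'$ increases strictly from
\begin{equation}\nonumber
V'(0)=-c\sum_i\omega_i\frac{\theta_{i,3}+\theta_{i,4}}{\theta_{i,3}\mathrm{e}^{\theta_{i,3}x_2}+\theta_{i,4}\mathrm{e}^{-\theta_{i,4}x_2}}
\end{equation}
up to $V'(x_2)=-c$. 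The hypothesis on $c$ is exactly $V'(0)\ge-1$, so $-1\le V'<-c$ on $[0,x_2)$. On $[x_2,\infty)$ we have $V'=-c$ and $V''=0$. So $V$ is decreasing and convex.

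\textbf{Step 2: the extended HJB system \eqref{extendedhjb}.} With the ansatz \eqref{fv} we get $f_s(x,t,t)=\sum_i\omega_i\delta_iV_i(x)$ and $f_t=-\sum_i\omega_i\delta_i \mathrm{e}^{-\delta_i(t-s)}V_i$.
\begin{itemize}
\item The $f$-equation on $\overline{W^{\hat\Xi}}$ (with $\hat\Pi=0$) and the $f$-equation on $P^{\hat\Xi}$ then follow termwise from \eqref{eq:Vi_ODE2}. The boundary conditions follow from $V_i(0)=0$.
\item Argmin. The expression to be minimized in $u$ is $u\,(V'(x)+1)$ plus terms independent of $u$. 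Since $V'\ge-1$ everywhere, $u=0$ is a minimizer on all of $\mathcal{\tilde Q}$. Because $\hat\Pi$ is constant, the continuity requirement in condition (2) is trivial.
\item $V$-equation on $[0,x_2)$. Here
\begin{equation}\nonumber
\mathcal A^{0}V-f_s=\sum_i\omega_i\left(\tfrac12\sigma^2V_i''+\mu V_i'-\delta_iV_i\right)=0,
\end{equation}
and $-c-V'>0$. So the minimum in \eqref{extendedhjb} equals $0$, and $W^{\hat\Xi}$ matches \eqref{hatxideg}.
\item $V$-equation on $[x_2,\infty)$. Here $-c-V'=0$, and we need
\begin{equation}\nonumber
G(x):=-\mu c-\sum_i\omega_i\delta_i\left(-cx+\tilde A_{i,5}\right)\ge0 .
\end{equation}
Evaluating the ODE at $x_2-$ and using $V''(x_2-)=0$ gives $G(x_2)=0$, and $G'(x)=c\sum_i\omega_i\delta_i>0$. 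Hence $G\ge0$ on $[x_2,\infty)$.
\end{itemize}
The possible lack of $C^2$ regularity of the individual $V_i$ at $x_2$ is handled as in Remark \ref{differ}.

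\textbf{Step 3: remaining conditions of the verification theorems.} I copy the argument of Proposition \ref{equilver}, which becomes simpler because $\hat\Pi\equiv0$.
\begin{itemize}
\item Admissibility: the reflected SDE now has constant drift, so a unique strong solution exists. The dividend integrability follows from the same yearly-block estimate, with $K=2\max\{x,x_2\}+\mu$.
\item Conditions (4)--(5): they follow from $\hat X\in[0,x_2]$ (after a possible initial jump), which makes $V'$, $f_x$ and $f$ bounded along the path.
\item Condition (6): it follows from the linear growth of $V$ and $f$ together with exponential moments of $\max_{r\in[t,t+h_0]}(B_r-B_t)$.
\end{itemize}
Theorem \ref{verificationthm} then gives the equilibrium property. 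Theorem \ref{mildverificationthm} applies likewise, since $-c\le0$ and its argmin condition is required only on $\overline{W^{\hat\Xi}}$; this gives the mild equilibrium.

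The main obstacle is Step 1. It consists of the convexity argument on the whole interval $[0,x_2)$ and the identification of the hypothesis on $c$ with $V'(0)\ge-1$. This is the step that guarantees that the effort level $0$ is optimal everywhere and that no effort threshold appears.
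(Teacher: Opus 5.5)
Your proposal is correct and follows the same route as the paper: convexity of $V$ on $[0,x_2)$ by the change-of-variable argument of Proposition \ref{convex1}, the observation that the hypothesis on $c$ is exactly $V'(0)\ge -1$ (so $u=0$ is a minimizer everywhere), and then the regularity and integrability checks of Remark \ref{differ} and Proposition \ref{equilver}. You also explicitly check that $\mathcal{A}^{0}V-f_s(x,t,t)\ge 0$ on $[x_2,+\infty)$, using that it vanishes at $x_2$ and has positive slope $c\sum_i\omega_i\delta_i$; this fills in a step the paper leaves implicit.
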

\begin{proof}
Using the same argument as Proposition \ref{convex1}, we know that $V^{\prime\prime}>0, \forall x\in[0,x_2)$. Therefore, \eqref{fv} and \eqref{Videg} are a solution to the extended HJB system \eqref{extendedhjb} under the discount function \eqref{eq:mix_exp_discount} if $V^{\prime}(0)=-\sum\limits_{i=1}^{2}\omega_ic\frac{\theta_{i,3}+\theta_{i,4}}{\theta_{i,3} \mathrm{e}^{\theta_{i,3}x_2}+\theta_{i,4}\mathrm{e}^{-\theta_{i,4}x_2}}\ge -1$, i.e., $c\le \frac{1}{\sum\limits_{i=1}^{2}\omega_i\frac{\theta_{i,3}+\theta_{i,4}}{\theta_{i,3} \mathrm{e}^{\theta_{i,3}x_2}+\theta_{i,4}\mathrm{e}^{-\theta_{i,4}x_2}}}$	, where $x_2$ is the unique positive solution to \eqref{eq:x1x23}. The remaining proofs are similar to Remark \ref{differ} and Proposition \ref{equilver}.
\end{proof}

Suppose $0<c\le \frac{1}{\sum\limits_{i=1}^{2}\omega_i\frac{\theta_{i,3}+\theta_{i,4}}{\theta_{i,3} \mathrm{e}^{\theta_{i,3}x_2}+\theta_{i,4}\mathrm{e}^{-\theta_{i,4}x_2}}}$	, where $x_2$ is the unique positive solution to \eqref{eq:x1x23}. Proposition \ref{cdeg} shows  the equilibrium regular-singular control law is to pay the dividend once the surplus exceeds the dividend threshold $x_2$ and not to apply effort even if the firm is at the ruin time since $c$ is relatively small and the cost for dividend is relatively large.

\begin{remark}
If we set $x_1=x_2=0$, then $V_i(x)=-cx, \forall x\ge0$. It is easy to show that \eqref{fv} in this case is not a solution to the extended HJB system \eqref{extendedhjb} under the discount function \eqref{eq:mix_exp_discount}. 
\end{remark}

The following proposition provides a sufficient condition for the existence of a solution $0<x_1<x_2$ to \eqref{eq:x1x2}.

\begin{proposition}\label{suff}
Suppose $\frac{\theta_{1,4}-\theta_{1,3}}{\theta_{2,1}}<\frac{\omega_2}{\omega_1}$, $\frac{\theta_{2,4}-\theta_{2,3}}{\theta_{1,1}}<\frac{\omega_1}{\omega_2}$  and\\ $\max\left\{\frac{2M}{\sigma^2(\theta_{1,2}+\theta_{1,3}-\theta_{1,4})},\frac{2M}{\sigma^2(\theta_{2,2}+\theta_{2,3}-\theta_{2,4})},\frac{1}{\sum\limits_{i=1}^{2}\omega_i\frac{\theta_{i,3}+\theta_{i,4}}{\theta_{i,3} \mathrm{e}^{\theta_{i,3}x_{2,0}}+\theta_{i,4}\mathrm{e}^{-\theta_{i,4}x_{2,0}}}}\right\}<c<1$	, where $x_{2,0}$ is the unique positive solution to \eqref{eq:x1x23}, then there exists solution $0<x_1<x_2$ to \eqref{eq:x1x2}.	
\end{proposition}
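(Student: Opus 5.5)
The plan is to treat \eqref{eq:x1x2} as a two-stage problem and use continuity and intermediate-value arguments. First, for each fixed $x_1\ge0$, consider the problem in which effort $M$ is exerted on $[0,x_1)$ and dividends are paid above some $x_2>x_1$, with the coefficients $A_{i,j}(x_1,x_2)$ given by the explicit formulas above. Define $G(x_1,x_2):=\sum_{i}\omega_i\big(A_{i,3}\theta_{i,3}^2e^{\theta_{i,3}x_2}+A_{i,4}\theta_{i,4}^2e^{-\theta_{i,4}x_2}\big)$, i.e.\ $V''(x_2-)$, and $F_1(x_1,x_2):=V'(x_1)+1$. The goal is to find a zero of the pair $(F_1,G)$ with $0<x_1<x_2$.

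\textbf{Step 1: define the dividend threshold $x_2(x_1)$.} For fixed $x_1$, I would show that $G(x_1,\cdot)$ has a zero $x_2(x_1)>x_1$. At $x_2\downarrow x_1$ we have $V'(x_2)=-c$ and the ODE on $[x_1,x_2)$ gives $\frac12\sigma^2V''=\sum\omega_i\delta_iV_i-\mu V'$. As $x_2\to\infty$, $G$ tends to $-c\sum\omega_i\theta_{i,3}<0$, as in the analysis of $F$. For the positive side, I would use the hypotheses $\frac{\theta_{1,4}-\theta_{1,3}}{\theta_{2,1}}<\frac{\omega_2}{\omega_1}$ and $\frac{\theta_{2,4}-\theta_{2,3}}{\theta_{1,1}}<\frac{\omega_1}{\omega_2}$. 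These appear designed to control the sign of $G$ near $x_2=x_1$ by comparing the $\theta_{i,4}-\theta_{i,3}$ terms, which come from the inner region, against the $\theta_{j,1}$ terms. With a sign change in hand, I would take $x_2(x_1)$ to be the smallest zero and use the implicit function theorem, or the monotonicity argument used for $F$, to get continuity in $x_1$. At $x_1=0$ the system reduces to \eqref{eq:x1x23}, so $x_2(0)=x_{2,0}$.

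\textbf{Step 2: intermediate value in $x_1$.} Set $\Phi(x_1):=V'(x_1;x_1,x_2(x_1))+1$, which is continuous. At $x_1=0$, $V'(0)=-c\sum_i\omega_i\frac{\theta_{i,3}+\theta_{i,4}}{\theta_{i,3}e^{\theta_{i,3}x_{2,0}}+\theta_{i,4}e^{-\theta_{i,4}x_{2,0}}}$, so the hypothesis that $c$ exceeds the third quantity in the max gives $\Phi(0)<0$. Since Proposition \ref{cdeg} fails exactly there, effort is worthwhile.

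To get $\Phi>0$ for large $x_1$, I would look at the regime where $x_2(x_1)$ stays at distance $O(1)$ above $x_1$, with $x_1\to\infty$. On $[0,x_1)$ we have $V_i\approx \frac{M}{\delta_i}(1-e^{-\theta_{i,2}x})+A_{i,1}e^{\theta_{i,1}x}$, and the matching at $x_1$ with the dividend-side derivative $-c$ propagated back across a bounded interval leads to a limit for $V'(x_1)$. The conditions $c>\frac{2M}{\sigma^2(\theta_{i,2}+\theta_{i,3}-\theta_{i,4})}$ should then make this limit exceed $-1$. The mechanism is that the ODE at $x_1$ gives $-\frac12\sigma^2V''(x_1)=(\mu+M)V'+M-\sum\omega_i\delta_iV_i$ on the left and $\mu V'-\sum\omega_i\delta_iV_i$ on the right, and these are compatible only when $V'(x_1)=-1$. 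The bound on $c$ converts into $V'(x_1)>-1$ for large $x_1$ through the root relations $\theta_{i,2}-\theta_{i,1}=2(\mu+M)/\sigma^2$ and $\theta_{i,4}-\theta_{i,3}=2\mu/\sigma^2$. With both signs established, the intermediate value theorem yields $x_1^*>0$ with $\Phi(x_1^*)=0$, and $x_2=x_2(x_1^*)>x_1^*$.

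\textbf{Main obstacle.} The hardest part is Step 1: showing that a root $x_2(x_1)>x_1$ exists for every $x_1$, with enough regularity that $\Phi$ is continuous, and then obtaining the sign of $\Phi$ at large $x_1$. The $A_{i,j}$ expressions are messy. I would first rewrite $G$ using $D_i>0$ and the decompositions given in the proof of Proposition \ref{convex1}, which reduces it to a weighted sum of two one-dimensional functions whose monotonicity can be checked term by term. If the sign at large $x_1$ cannot be controlled directly, I would apply the intermediate value argument along the curve $\{G=0\}$ using a connectedness argument for the zero set instead of continuity of a selected root.
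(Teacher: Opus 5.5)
\paragraph{A gap in Steps 1--2.} Step 1 asserts that for \emph{every} $x_1\ge 0$ the map $x_2\mapsto V''(x_2-)$ has a zero $x_2(x_1)>x_1$. Step 2 then looks for $\Phi>0$ as $x_1\to\infty$ along this branch. For large $x_1$ the branch does not exist.

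Let $S(x_1)$ be the value of $V''(x_2-)$ in the degenerate case $x_2=x_1$. This is the no-effort ODE evaluated at the junction, with $V_i'(x_1)=-c$ and the effort-region matching at $0$ and $x_1$. Then $S(0)=2c\mu/\sigma^2>0$, but as $x_1\to\infty$,
\[
S(x_1)\to\sum_{i=1}^{2}\omega_i\Bigl(\tfrac{2M}{\sigma^2}-c(\theta_{i,2}+\theta_{i,3}-\theta_{i,4})\Bigr),
\]
which is negative exactly because of the hypothesis $c>\frac{2M}{\sigma^2(\theta_{i,2}+\theta_{i,3}-\theta_{i,4})}$. We also have $V''(x_2-)\to -c\sum_i\omega_i\theta_{i,3}<0$ as $x_2\to\infty$. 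Finally, $x_2\mapsto V''(x_2-)$ is strictly decreasing for all large $x_1$: the factor controlling its derivative tends to $-\frac{2M(1-c)}{\sigma^2\theta_{i,1}}<0$. Together these show that for large $x_1$ there is no $x_2>x_1$ with $V''(x_2-)=0$.

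The root branch therefore lives only on $[0,x_{1,0}]$, where $x_{1,0}$ is the first zero of $S$, and it runs into the diagonal: $x_2(x_{1,0})=x_{1,0}$. So the regime ``$x_1\to\infty$ with $x_2(x_1)-x_1=O(1)$'' is empty. The lower bounds on $c$ do not work by pushing some limit of $V'(x_1)$ above $-1$, and your asymptotic analysis cannot supply the positive endpoint. The connectedness fallback fails for the same reason unless you know that the zero set terminates on $\{x_2=x_1\}$ at a finite point.

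\paragraph{The missing idea.} Take $x_{1,0}$ itself as the right endpoint. There the middle region has zero length, so $V'(x_1)=V'(x_2)=-c$ and $\Phi(x_{1,0})=1-c>0$, with no asymptotics needed. Your left endpoint is correct: $x_2(0)=x_{2,0}$, and $\Phi(0)<0$ from the third term in the max. The intermediate value theorem on $[0,x_{1,0}]$ then gives $x_1^*\in(0,x_{1,0})$, and $S(x_1^*)>0$ guarantees $x_2(x_1^*)>x_1^*$.

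\paragraph{Role of the $\omega$-ratio hypotheses.} You have misassigned them. They do not control the sign of $V''(x_2-)$ at $x_2=x_1$; that sign is $S(x_1)$, which is positive near $x_1=0$ for free. Write $S=\sum_i\omega_iS_i$ and
\[
g_i(x)=\frac{e^{\theta_{i,1}x}-e^{-\theta_{i,2}x}}{\theta_{i,1}e^{\theta_{i,1}x}+\theta_{i,2}e^{-\theta_{i,2}x}}\in\Bigl[0,\tfrac{1}{\theta_{i,1}}\Bigr).
\]
The paper combines $S(x_1)\ge 0$ on $[0,x_{1,0}]$, the bound $S_i\le S_i(0)=c(\theta_{i,4}-\theta_{i,3})$, and these hypotheses to show $-S_i(x_1)g_i(x_1)-c<0$ for each $i$. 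This yields $\partial_{x_2}V''(x_2-)<0$ throughout $[0,x_{1,0}]$. That strict monotonicity is what makes the root unique and continuous in $x_1$, which a ``smallest zero'' selection does not give you by itself.
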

\begin{proof}
Using the smooth fit principle and combining with the ODE system \eqref{eq:Vi_ODE}, we find that
\begin{equation}\label{eq:smooth1}
V^{\prime}_i(x_2-) = A_{i,3}\theta_{i,3} \mathrm{e}^{\theta_{i,3}x_2} - A_{i,4} \theta_{i,4}\mathrm{e}^{-\theta_{i,4}x_2}  = -c,
\end{equation}
and \begin{equation}\label{eq:smooth2}
V^{\prime\prime}(x_2-) = \sum\limits_{i=1}^{2}\omega_i\left(A_{i,3}\theta_{i,3}^2 \mathrm{e}^{\theta_{i,3}x_2} + A_{i,4} \theta_{i,4}^2\mathrm{e}^{-\theta_{i,4}x_2}\right) = 0.
\end{equation}
Denote
\begin{equation}\begin{aligned}\nonumber
F_i(x)&:=\frac{M}{\delta_i}-\frac{M}{\delta_i}\frac{(\theta_{i,1}+\theta_{i,2})\mathrm{e}^{(\theta_{i,1}-\theta_{i,2})x}}{\theta_{i,1}\mathrm{e}^{\theta_{i,1}x}+\theta_{i,2}\mathrm{e}^{-\theta_{i,2}x}},\\
G_i(x)&:=\frac{\mathrm{e}^{\theta_{i,1}x}-\mathrm{e}^{-\theta_{i,2}x}}{\theta_{i,1}\mathrm{e}^{\theta_{i,1}x}+\theta_{i,2}\mathrm{e}^{-\theta_{i,2}x}},\\
H_i(x)&:=\mathrm{e}^{-\theta_{i,3}x}-\mathrm{e}^{\theta_{i,4}x},\\
L_i(x)&:=\theta_{i,4}\mathrm{e}^{-\theta_{i,3}x}+\theta_{i,3}\mathrm{e}^{\theta_{i,4}x}.
\end{aligned}\end{equation}
Since $V_i(0)=0$, $V_i(x_1+)=V_i(x_1-)$, and $V^{\prime}_i(x_1+)=V^{\prime}_i(x_1-)$, i.e.,
\begin{equation}\nonumber\left\{\begin{aligned}
& \frac{M}{\delta_i} + A_{i,1} + A_{i,2} = 0, \\
& A_{i,3} \mathrm{e}^{\theta_{i,3}x_1} + A_{i,4} \mathrm{e}^{-\theta_{i,4}x_1}=\frac{M}{\delta_i} + A_{i,1} \mathrm{e}^{\theta_{i,1}x_1} + A_{i,2} \mathrm{e}^{-\theta_{i,2}x_1}, \\
& A_{i,3} \theta_{i,3} \mathrm{e}^{\theta_{i,3}x_1} - A_{i,4} \theta_{i,4} \mathrm{e}^{-\theta_{i,4}x_1}=A_{i,1} \theta_{i,1} \mathrm{e}^{\theta_{i,1}x_1} - A_{i,2} \theta_{i,2} \mathrm{e}^{-\theta_{i,2}x_1}, 
\end{aligned}\right.\end{equation}
and \eqref{eq:smooth1}, we have
\begin{equation}\label{eq:smooth3}
A_{i,3}\mathrm{e}^{\theta_{i,3}x_2}=\frac{\theta_{i,4}F_i(x_1)-c\theta_{i,4}G_i(x_1)\mathrm{e}^{\theta_{i,4}(x_2-x_1)}-c\mathrm{e}^{\theta_{i,4}(x_2-x_1)}}{L_i(x_2-x_1)-\theta_{i,3}\theta_{i,4}G_i(x_1)H_i(x_2-x_1)}.
\end{equation}
Moreover, substituting \eqref{eq:smooth1} and \eqref{eq:smooth3} into \eqref{eq:smooth2}, we have 
\begin{equation}\label{eq:existence1}\begin{aligned}
\sum\limits_{i=1}^{2}\omega_i\frac{\theta_{i,3}\theta_{i,4}(\theta_{i,3}+\theta_{i,4})F_i(x_1)-c\theta_{i,3}\theta_{i,4}G_i(x_1)L_i(x_2-x_1)-c\left(\theta_{i,3}^2\mathrm{e}^{\theta_{i,4}(x_2-x_1)}-\theta_{i,4}^2\mathrm{e}^{-\theta_{i,3}(x_2-x_1)}\right)}{L_i(x_2-x_1)-\theta_{i,3}\theta_{i,4}G_i(x_1)H_i(x_2-x_1)}=0.
\end{aligned}\end{equation}
On the other hand, using the smooth fit principle
\begin{equation}\label{eq:smooth4}
V^{\prime}(x_1+)=\sum\limits_{i=1}^{2}\omega_i \left(A_{i,3} \theta_{i,3} \mathrm{e}^{\theta_{i,3}x_1} - A_{i,4} \theta_{i,4} \mathrm{e}^{-\theta_{i,4}x_1}\right)=-1,
\end{equation}
substituting \eqref{eq:smooth1} and \eqref{eq:smooth3} into \eqref{eq:smooth4}, we have 
\begin{equation}\begin{aligned}\label{eq:existence2}
\sum\limits_{i=1}^{2}\omega_i\frac{\theta_{i,3}\theta_{i,4}F_i(x_1)H_i(x_2-x_1)-c(\theta_{i,3}+\theta_{i,4})\mathrm{e}^{(\theta_{i,4}-\theta_{i,3})(x_2-x_1)}}{L_i(x_2-x_1)-\theta_{i,3}\theta_{i,4}G_i(x_1)H_i(x_2-x_1)}=-1.
\end{aligned}\end{equation}
Fix $x_1\ge0$, let $R(z):=\sum\limits_{i=1}^{2}\omega_i\frac{\theta_{i,3}\theta_{i,4}(\theta_{i,3}+\theta_{i,4})F_i(x_1)-c\theta_{i,3}\theta_{i,4}G_i(x_1)L_i(z)-c\left(\theta_{i,3}^2\mathrm{e}^{\theta_{i,4}z}-\theta_{i,4}^2\mathrm{e}^{-\theta_{i,3}z}\right)}{L_i(z)-\theta_{i,3}\theta_{i,4}G_i(x_1)H_i(z)}$, we have
\begin{equation}\begin{aligned}\nonumber
&R^{\prime}(z)=\sum\limits_{i=1}^{2}\omega_i\frac{\left(\!-\!c\theta_{i,3}\theta_{i,4}G_i(x_1)L_i^{\prime}(z)\!-\!c\left(\theta_{i,3}^2\theta_{i,4}\mathrm{e}^{\theta_{i,4}z}\!+\!\theta_{i,3}\theta_{i,4}^2\mathrm{e}^{-\theta_{i,3}z}\right)\right)\left(L_i(z)\!-\!\theta_{i,3}\theta_{i,4}G_i(x_1)H_i(z)\right)}{\left(L_i(z)\!-\!\theta_{i,3}\theta_{i,4}G_i(x_1)H_i(z)\right)^2}\\
&\!-\!\sum\limits_{i=1}^{2}\omega_i\frac{\left(\theta_{i,3}\theta_{i,4}(\theta_{i,3}\!+\!\theta_{i,4})F_i(x_1)\!-\!c\theta_{i,3}\theta_{i,4}G_i(x_1)L_i(z)\!-\!c\left(\theta_{i,3}^2\mathrm{e}^{\theta_{i,4}z}\!-\!\theta_{i,4}^2\mathrm{e}^{-\theta_{i,3}z}\right)\right)\left(L_i^{\prime}(z)\!-\!\theta_{i,3}\theta_{i,4}G_i(x_1)H_i^{\prime}(z)\right)}{\left(L_i(z)\!-\!\theta_{i,3}\theta_{i,4}G_i(x_1)H_i(z)\right)^2}\\
&=\sum\limits_{i=1}^{2}\omega_i\frac{-c\theta_{i,3}\theta_{i,4}(\theta_{i,3}+\theta_{i,4})^2\mathrm{e}^{(\theta_{i,4}-\theta_{i,3})z}\left(-\theta_{i,3}\theta_{i,4}G_i^2(x_1)+(\theta_{i,4}-\theta_{i,3})G_i(x_1)+1\right)}{\left(L_i(z)-\theta_{i,3}\theta_{i,4}G_i(x_1)H_i(z)\right)^2}\\
&-\sum\limits_{i=1}^{2}\omega_i\frac{\theta_{i,3}\theta_{i,4}(\theta_{i,3}+\theta_{i,4})F_i(x_1)\left(L_i^{\prime}(z)-\theta_{i,3}\theta_{i,4}G_i(x_1)H_i^{\prime}(z)\right)}{\left(L_i(z)-\theta_{i,3}\theta_{i,4}G_i(x_1)H_i(z)\right)^2}.
\end{aligned}\end{equation}
It is easy to show that $F_i(x_1)\ge 0, 0\le G_i(x_1)< \frac{1}{\theta_{i,1}}, \forall x_1\ge 0$, $L_i^{\prime}(z)\ge 0, H_i^{\prime}(z)=-\theta_{i,3}\mathrm{e}^{-\theta_{i,3}z}-\theta_{i,4}\mathrm{e}^{\theta_{i,4}z}=-\mathrm{e}^{(\theta_{i,4}-\theta_{i,3})z}\left(\theta_{i,3}\mathrm{e}^{-\theta_{i,4}z}+\theta_{i,4}\mathrm{e}^{\theta_{i,3}z}\right)\le-\mathrm{e}^{(\theta_{i,4}-\theta_{i,3})z}\left(\theta_{i,3}+\theta_{i,4}\right), \forall z\ge0$. Therefore, we have
\begin{equation}\begin{aligned}\nonumber
R^{\prime}(z)\le\sum\limits_{i=1}^{2}\omega_i\frac{\theta_{i,3}\theta_{i,4}(\theta_{i,3}+\theta_{i,4})^2\mathrm{e}^{(\theta_{i,4}-\theta_{i,3})z}\left(c\theta_{i,3}\theta_{i,4}G_i^2(x_1)-c(\theta_{i,4}-\theta_{i,3})G_i(x_1)-c-\theta_{i,3}\theta_{i,4}F_i(x_1)G_i(x_1)\right)}{\left(L_i(z)-\theta_{i,3}\theta_{i,4}G_i(x_1)H_i(z)\right)^2}.
\end{aligned}\end{equation}
It is easy to show that $R(0)=S(x_1):=\sum\limits_{i=1}^{2}\omega_i\left(\theta_{i,3}\theta_{i,4}F_i(x_1)-c\theta_{i,3}\theta_{i,4}G_i(x_1)-c\theta_{i,3}+c\theta_{i,4}\right)$ and $S(0)=\sum\limits_{i=1}^{2}\omega_i(-c\theta_{i,3}+c\theta_{i,4})=\frac{2c\mu}{\sigma^2}>0$. Since $c>\max\left\{\frac{2M}{\sigma^2(\theta_{1,2}+\theta_{1,3}-\theta_{1,4})},\frac{2M}{\sigma^2(\theta_{2,2}+\theta_{2,3}-\theta_{2,4})}\right\}$, we know that $S(+\infty)=\sum\limits_{i=1}^{2}\omega_i\left(\theta_{i,3}\theta_{i,4}\frac{M}{\delta_i}-c\theta_{i,3}\theta_{i,4}\frac{1}{\theta_{i,1}}-c\theta_{i,3}+c\theta_{i,4}\right)=\sum\limits_{i=1}^{2}\omega_i\left(\frac{2M}{\sigma^2}-c\theta_{i,2}-c\theta_{i,3}+c\theta_{i,4}\right)<0$. Hence, there exists $x_{1,0}>0$ such that $S(x_1)>0, \forall x_1\in[0,x_{1,0})$ and $S(x_{1,0})=0$. Hereafter, we will confine the discussion of this problem to $x_1\in[0,x_{1,0}]$. Let 
\begin{equation}\begin{aligned}\nonumber
S_i(x_1):=\theta_{i,3}\theta_{i,4}F_i(x_1)-c\theta_{i,3}\theta_{i,4}G_i(x_1)-c\theta_{i,3}+c\theta_{i,4},
\end{aligned}\end{equation}
we know that $S_i^{\prime}(x_1)=\frac{\theta_{i,3}\theta_{i,4}(\theta_{i,1}+\theta_{i,2})\mathrm{e}^{(\theta_{i,1}-\theta_{i,2})x_1}}{\left(\theta_{i,1}\mathrm{e}^{\theta_{i,1}x
_1}+\theta_{i,2}\mathrm{e}^{-\theta_{i,2}x_1}\right)^2}\left[\frac{M}{\delta_i}\theta_{i,1}\theta_{i,2}\left(\mathrm{e}^{\theta_{i,1}x_1}-\mathrm{e}^{-\theta_{i,2}x_1}\right)-c(\theta_{i,1}+\theta_{i,2})\right]$. Therefore, $S_i^{\prime}(x_1)<0$ when $\frac{M}{\delta_i}\theta_{i,1}\theta_{i,2}\left(\mathrm{e}^{\theta_{i,1}x_1}-\mathrm{e}^{-\theta_{i,2}x_1}\right)<c(\theta_{i,1}+\theta_{i,2})$, and $S_i^{\prime}(x_1)>0$ when $\frac{M}{\delta_i}\theta_{i,1}\theta_{i,2}\left(\mathrm{e}^{\theta_{i,1}x_1}-\mathrm{e}^{-\theta_{i,2}x_1}\right)>c(\theta_{i,1}+\theta_{i,2})$. Since $S_i(0)=\frac{2c\mu}{\sigma^2}>0>\theta_{i,3}\theta_{i,4}\frac{M}{\delta_i}-c\theta_{i,3}\theta_{i,4}\frac{1}{\theta_{i,1}}-c\theta_{i,3}+c\theta_{i,4}=S_i(+\infty)$, we know that $S_i(x_1)\le S_i(0)=c(\theta_{i,4}-\theta_{i,3}), \forall x_1\ge0$.

For any $x_1\in[0,x_{1,0}]$, if $S_i(x_1)\le 0$, since $S(x_1)\ge0$ and $G_i(x_1)<\frac{1}{\theta{i,1}}$, then $S_i(x_1)G_i(x_1)+c\ge S_i(x_1)\frac{1}{\theta_{i,1}}+c\ge -\frac{\omega_j}{\omega_i\theta_{i,1}}S_j(x_1)+c\ge -\frac{\omega_j}{\omega_i\theta_{i,1}}S_j(0)+c=c\left(-\frac{\omega_j(\theta_{j,4}-\theta_{j,3})}{\omega_i\theta_{i,1}}+1\right)>0$, where $j\neq i$. Therefore, for any $i=1,2$, $W_i(x_1):=-S_i(x_1)G_i(x_1)-c<0$. Thus, 
 \begin{equation}\begin{aligned}\nonumber
R^{\prime}(z)\le\sum\limits_{i=1}^{2}\omega_i\frac{\theta_{i,3}\theta_{i,4}(\theta_{i,3}+\theta_{i,4})^2\mathrm{e}^{(\theta_{i,4}-\theta_{i,3})z}W_i(x_1)}{\left(L_i(z)-\theta_{i,3}\theta_{i,4}G_i(x_1)H_i(z)\right)^2}<0.
\end{aligned}\end{equation}
Therefore, for any $x_1\in[0,x_{1,0}]$, since $R(0)\ge0$, $R(+\infty)=-c\sum\limits_{i=1}^{2}\omega_i\theta_{i,3}<0$ and $R^{\prime}(z)<0, \forall z\ge0$, we know that there exists a unique $z(x_1)$, such that $R(z(x_1))=0$, $z(x_1)$ is continuous w.r.t $x_1$, $z(0)=x_{2,0}$ and $z(x_{1,0})=0$. Let $Q(x_1):=\sum\limits_{i=1}^{2}\omega_i\frac{\theta_{i,3}\theta_{i,4}F_i(x_1)H_i(z(x_1))-c(\theta_{i,3}+\theta_{i,4})\mathrm{e}^{(\theta_{i,4}-\theta_{i,3})z(x_1)}}{L_i(z(x_1))-\theta_{i,3}\theta_{i,4}G_i(x_1)H_i(z(x_1))}$ be a continuous function w.r.t $x_1$, since $c>\frac{1}{\sum\limits_{i=1}^{2}\omega_i\frac{\theta_{i,3}+\theta_{i,4}}{\theta_{i,3} \mathrm{e}^{\theta_{i,3}x_{2,0}}+\theta_{i,4}\mathrm{e}^{-\theta_{i,4}x_{2,0}}}}$, by the proof of Proposition \ref{cdeg}, we know that $Q(0)=\sum\limits_{i=1}^{2}\omega_i\frac{-c(\theta_{i,3}+\theta_{i,4})\mathrm{e}^{(\theta_{i,4}-\theta_{i,3})z(0)}}{L_i(z(0))}=\sum\limits_{i=1}^{2}\omega_i\frac{-c(\theta_{i,3}+\theta_{i,4})\mathrm{e}^{(\theta_{i,4}-\theta_{i,3})x_{2,0}}}{L_i(x_{2,0})}<-1$. Besides, we have $Q(x_{1,0})=\sum\limits_{i=1}^{2}\omega_i\frac{\theta_{i,3}\theta_{i,4}F_i(x_{1,0})H_i(z(x_{1,0}))-c(\theta_{i,3}+\theta_{i,4})\mathrm{e}^{(\theta_{i,4}-\theta_{i,3})z(x_{1,0})}}{L_i(z(x_{1,0}))-\theta_{i,3}\theta_{i,4}G_i(x_1)H_i(z(x_{1,0}))}=-c>-1$. Therefore, there exists $x_	1\in(0,x_{1,0})$ such that $Q(x_1)=-1$. Hence, there exists solution $0<x_1<x_2$ to  \eqref{eq:x1x2}.
\end{proof}

\begin{remark}
The conditions in Proposition \ref{suff} are easy to meet. As long as $\mu$ is sufficiently small or $\delta_2\sigma^2$ is sufficiently large, $\frac{\theta_{1,4}-\theta_{1,3}}{\theta_{2,1}}=\frac{2\mu}{-(\mu+M)+\sqrt{(\mu+M)^2+2\delta_2\sigma^2}}<\frac{\omega_2}{\omega_1}$ can be achieved. Similarly, as long as $\mu$ is sufficiently small or $\delta_1\sigma^2$ is sufficiently large, the inequality $\frac{\theta_{2,4}-\theta_{2,3}}{\theta_{1,1}}=\frac{2\mu}{-(\mu+M)+\sqrt{(\mu+M)^2+2\delta_1\sigma^2}}<\frac{\omega_1}{\omega_2}$ will hold.

Besides, it is easy to show $\frac{2M}{\sigma^2(\theta_{i,2}+\theta_{i,3}-\theta_{i,4})}=\frac{2M}{-\mu+M+\sqrt{(\mu+M)^2+2\delta_i\sigma^2}}<1$. Furthermore, let $x_{2,0}$ be the unique positive solution to \eqref{eq:x1x23}, we have
\begin{equation}\nonumber
\begin{aligned}
\ln\left( \sum\limits_{i=1}^{2}\omega_i\frac{\theta_{i,3}+\theta_{i,4}}{\theta_{i,3} \mathrm{e}^{\theta_{i,3}x_{2,0}}+\theta_{i,4}\mathrm{e}^{-\theta_{i,4}x_{2,0}}}\right)\ge\sum\limits_{i=1}^{2}\omega_i\ln\left(\frac{\theta_{i,3}+\theta_{i,4}}{\theta_{i,3} \mathrm{e}^{\theta_{i,3}x_{2,0}}+\theta_{i,4}\mathrm{e}^{-\theta_{i,4}x_{2,0}}}\right)>\sum\limits_{i=1}^{2}\omega_i\ln\left(\frac{\theta_{i,3}+\theta_{i,4}}{\theta_{i,3}+\theta_{i,4}}\right)=0.\end{aligned}
\end{equation}
The last inequality holds based on $x_{2,0}$ is the unique positive solution to \eqref{eq:x1x23}, $F(x)>0, \forall x \in[0,x_{2,0})$ and $\frac{\partial\sum\limits_{i=1}^{2}\omega_i\ln\left(\frac{\theta_{i,3}+\theta_{i,4}}{\theta_{i,3} \mathrm{e}^{\theta_{i,3}x}+\theta_{i,4}\mathrm{e}^{-\theta_{i,4}x}}\right)}{\partial x}=\frac{F(x)}{c}$. Therefore, we know $\sum\limits_{i=1}^{2}\omega_i\frac{\theta_{i,3}+\theta_{i,4}}{\theta_{i,3} \mathrm{e}^{\theta_{i,3}x_{2,0}}+\theta_{i,4}\mathrm{e}^{-\theta_{i,4}x_{2,0}}}>1$. Hence, There exists a value $c\in(0,1)$ such that 
\begin{equation}\nonumber
\begin{aligned}c> \max\left\{\frac{2M}{\sigma^2(\theta_{1,2}+\theta_{1,3}-\theta_{1,4})},\frac{2M}{\sigma^2(\theta_{2,2}+\theta_{2,3}-\theta_{2,4})},\frac{1}{\sum\limits_{i=1}^{2}\omega_i\frac{\theta_{i,3}+\theta_{i,4}}{\theta_{i,3} \mathrm{e}^{\theta_{i,3}x_{2,0}}+\theta_{i,4}\mathrm{e}^{-\theta_{i,4}x_{2,0}}}}\right\}.\end{aligned}
\end{equation}
\end{remark}

\begin{remark}
Let 
\begin{equation}\nonumber
\hat \Pi_a(x,t) := \left\{\begin{aligned}
& M,\quad  0 \le x < x_1, t\ge0,\\
& 0, \qquad x_1\leq x \leq x_2, t\ge 0,\\
& a, \qquad x > x_2, t\ge 0,
\end{aligned}\right.
\end{equation}
and
\begin{equation}
\begin{aligned}
&W^{\hat{\Xi}}:=\left\{(x,t)\in\mathcal{\tilde{Q}}|0\leq x< x_2\right\},\\
&P^{\hat{\Xi}}:=\left\{(x,t)\in\mathcal{\tilde{Q}}|x\ge x_2\right\}.
\end{aligned}\nonumber
\end{equation}
Suppose that there exist $0<x_1<x_2$ such that $V$ in \eqref{fv} and \eqref{Vi} satisfies $V^\prime(x_1) = -1$
 ($\Leftrightarrow V^{\prime\prime}(x_1-)=V^{\prime\prime}(x_1+)$) and $V^{\prime\prime}(x_2-) = b\in[0,\frac{2a(1-c)}{\sigma^2}]$.
 Although we have
 \begin{equation}
\begin{aligned}V\in C^{1,1}\left(\mathcal{\tilde{Q}}\right)\bigcup C^{2,1}\left(\left\{(x,t)\in\mathcal{\tilde{Q}}\mid x\in[0,x_2)\cap(x_2,+\infty)\right\}\right),\end{aligned}\nonumber
\end{equation}
the Mild Verification Theorem \ref{mildverificationthm} still holds if we use $V^{\prime\prime}(x_2-)$ when $x=x_2$. Exactly similar to the above discussion, we can show that, for any $a\in[0,M]$, $\left(\hat\Pi_a,\hat\Xi\right)$ is a mild equilibrium regular-singular control law and $V$ in \eqref{fv} and \eqref{Vi} is the corresponding mild equilibrium value function for the objective \eqref{objexmple} under the discount function \eqref{eq:mix_exp_discount}. All the above-mentioned discussions can be directly transferred here. Due to the limitation of space, we will not elaborate on them any further.
\end{remark}

\subsection{Pseudo-exponential discount function}
In this subsection, we consider a pseudo-exponential discount function defined as
\begin{equation}\label{eq:pseudo_exp_discount}
\beta(t) := (1+\lambda t) \mathrm{e}^{-\delta t},\  \forall t\ge 0,
\end{equation}
where $0<\lambda< \delta$ are parameters. Pseudo-exponential discount function \eqref{eq:pseudo_exp_discount} implies a rising discount rate over time, so the firm overweights current costs and heavily penalizes future expenses. For a detailed explanation of this discount function, we refer the reader to \citet{ekeland2008invest}. 

Since the problem is time-homogeneous, we assume that $V$ is independent of time $t$. Inspired by \citet{ZHAO20141}, we consider the following ansatz:
\begin{equation}\begin{aligned}\label{fv2}
	f(x,t,s)&= \mathrm{e}^{-\delta (t-s)}\left[\lambda(t-s)V_3(x)+V_4(x)\right],\  \forall (x,t)\in\mathcal{\tilde{Q}}, s\in[0,t],\\
	V(x)&=V_4(x),\ \forall x\ge 0.\end{aligned}\end{equation}
Furthermore, we assume that there exist $0 < x_1 < x_2$ such that $V'(x) < -1$ when $0\le x < x_1$, $V'(x_1)=-1$, $-1 < V'(x) < -c$ when $x_1 < x < x_2$, $V'(x) =-c$ when $x\ge x_2$. Then we have
\begin{equation}\label{hatpi2}
\hat \Pi(x,t) := \left\{\begin{aligned}
& M,\quad  0 \le x < x_1, t\ge0,\\
& 0, \qquad x \geq x_1, t\ge 0,
\end{aligned}\right.
\end{equation}
and
\begin{equation}
\begin{aligned}
&W^{\hat{\Xi}}:=\left\{(x,t)\in\mathcal{\tilde{Q}}|0\leq x< x_2\right\},\\
&P^{\hat{\Xi}}:=\left\{(x,t)\in\mathcal{\tilde{Q}}|x\ge x_2\right\}.
\end{aligned}\label{hatxi2}
\end{equation}
For fixed $0 < x_1 < x_2$, suppose that $V_3(x)$ and $V_4(x)$ are given by the systems of ODEs
\begin{equation}\label{eq:V3_ODE}
\begin{cases}
\frac{1}{2} \sigma^2 V^{\prime\prime}_3(x) + (\mu + M) V^\prime_3(x) - \delta V_3(x) +M = 0, & x \in [0, x_1),\\
\frac{1}{2} \sigma^2 V^{\prime\prime}_3(x) + \mu V^\prime_3(x) - \delta V_3(x) = 0, \qquad & x \in [x_1, x_2),\\
-c- V^\prime_3(x) = 0, & x \in [x_2, +\infty),\\
V_3(0)=0,
\end{cases}
\end{equation}
and
\begin{equation}\label{eq:V4_ODE}
\begin{cases}
\frac{1}{2} \sigma^2 V^{\prime\prime}_4(x) + (\mu + M) V^\prime_4(x) - \delta V_4(x) +\lambda V_3(x)+M = 0, & x \in [0, x_1),\\
\frac{1}{2} \sigma^2 V^{\prime\prime}_4(x) + \mu V^\prime_4(x) - \delta V_4(x)+\lambda V_3(x) = 0, \qquad & x \in [x_1, x_2),\\
-c- V^\prime_4(x) = 0, & x \in [x_2, +\infty),\\
V_4(0)=0.
\end{cases}
\end{equation}
Denote by
\begin{equation}\begin{aligned}\nonumber
&\theta_{1}:= \frac{-(\mu+M) + \sqrt{(\mu+M)^2 + 2 \delta \sigma^2}}{\sigma^2} ,\quad \theta_{2}:= \frac{\mu+M + \sqrt{(\mu+M)^2 + 2 \delta \sigma^2}}{\sigma^2},\\
&\theta_{3}:= \frac{-\mu + \sqrt{\mu^2 + 2 \delta \sigma^2}}{\sigma^2} ,\quad\theta_{4}:= \frac{\mu + \sqrt{\mu^2 + 2 \delta \sigma^2}}{\sigma^2}.\end{aligned}\end{equation}
Thus a general solution to the ODEs \eqref{eq:V3_ODE} has the form
\begin{equation}\label{V3}
V_3(x) = \left\{\begin{aligned}
& \frac{M}{\delta} + A_{1} \mathrm{e}^{\theta_{1}x} + A_{2} \mathrm{e}^{-\theta_{2}x}, & x \in [0, x_1),\\
& A_{3} \mathrm{e}^{\theta_{3} x} + A_{4} \mathrm{e}^{-\theta_{4}x}, & x \in [x_1, x_2),\\
& -cx + A_{5}, \qquad & x \in [x_2, +\infty),
\end{aligned}\right.
\end{equation}
where $A_{i}$ are the constants to be determined.

Now to determine the values of $A_{i}$, we use the boundary condition and smooth fit principle, that is,
\begin{equation}\nonumber
V_3(0) = 0,\ V_3(x_1 -) = V_3(x_1+), \ V^\prime_3(x_1 -) = V^\prime_3(x_1+),\ V_3(x_2 -) = V_3(x_2+),\ V^\prime_3(x_2 -) = V^\prime_3(x_2+).
\end{equation}
Then $A_{i}$ satisfy the following linear equation system:
\begin{equation}\label{Alinear}
\left\{\begin{aligned}
& \frac{M}{\delta} + A_{1} + A_{2} = 0, \\
& \frac{M}{\delta} + A_{1} \mathrm{e}^{\theta_{1}x_1} + A_{2} \mathrm{e}^{-\theta_{2}x_1} = A_{3} \mathrm{e}^{\theta_{3}x_1} + A_{4} \mathrm{e}^{-\theta_{4}x_1}, \\
& A_{1} \theta_{1} \mathrm{e}^{\theta_{1}x_1} - A_{2} \theta_{2} \mathrm{e}^{-\theta_{2}x_1} = A_{3} \theta_{3} \mathrm{e}^{\theta_{3}x_1} - A_{4} \theta_{4} \mathrm{e}^{-\theta_{4}x_1}, \\
& A_{3} \mathrm{e}^{\theta_{3}x_2} + A_{4} \mathrm{e}^{-\theta_{4}x_2} = -cx_2 + A_{5},\\
& A_{3}\theta_{3} \mathrm{e}^{\theta_{3}x_2} - A_{4} \theta_{4}\mathrm{e}^{-\theta_{4}x_2}  = -c.
\end{aligned}\right.
\end{equation}

After obtaining $V_3$, a general solution to the ODEs \eqref{eq:V4_ODE} has the form
\begin{equation}\label{V4}
V_4(x) = \left\{\begin{aligned}
& \frac{M}{\delta}(\frac{\lambda}{\delta}+1)-\frac{\lambda A_1}{\mu+M+\sigma^2\theta_1}x\mathrm{e}^{\theta_{1}x}+\frac{\lambda A_2}{-(\mu+M)+\sigma^2\theta_2}x\mathrm{e}^{-\theta_{2}x}+ B_{1} \mathrm{e}^{\theta_{1}x} + B_{2} \mathrm{e}^{-\theta_{2}x}, & x \in [0, x_1),\\
& -\frac{\lambda A_3}{\mu+\sigma^2\theta_3}x\mathrm{e}^{\theta_{3}x}+\frac{\lambda A_4}{-\mu+\sigma^2\theta_4}x\mathrm{e}^{-\theta_{4}x}+B_{3} \mathrm{e}^{\theta_{3} x} + B_{4} \mathrm{e}^{-\theta_{4}x}, & x \in [x_1, x_2),\\
& -cx +B_{5}, \qquad & x \in [x_2, +\infty),
\end{aligned}\right.
\end{equation}
where $B_{i}$ are the constants to be determined.

Then, we use the boundary condition and smooth fit principle to determine the values of $B_{i}$, that is,
\begin{equation}\nonumber
V_4(0) = 0,\ V_4(x_1 -) = V_4(x_1+), \ V^\prime_4(x_1 -) = V^\prime_4(x_1+),\ V_4(x_2 -) = V_4(x_2+),\ V^\prime_4(x_2 -) = V^\prime_4(x_2+).
\end{equation}
Then $B_{i}$ satisfy a linear equation system whose coefficient matrix is  similar to that of \eqref{Alinear}, which is of full rank. Therefore, we can obtain the unique solution $B_{i}$.

It remains to determine the values of $x_1$ and $x_2$, which are solved from $V^\prime_4(x_1) = -1$ ($\Leftrightarrow V^{\prime\prime}_4(x_1-)=V^{\prime\prime}_4(x_1+)$) and $V^{\prime\prime}_4(x_2-) = 0=V^{\prime\prime}_4(x_2+)$, that is,
\begin{equation}\label{eq2:x1x2}
\left\{\begin{aligned}
&  -\frac{\lambda A_3}{\mu+\sigma^2\theta_3}\left(\mathrm{e}^{\theta_{3}x_1}+\theta_{3}x_1\mathrm{e}^{\theta_{3}x_1}\right)+\frac{\lambda A_4}{-\mu+\sigma^2\theta_4}\left(\mathrm{e}^{-\theta_{4}x_1}-\theta_{4}x_1\mathrm{e}^{-\theta_{4}x_1}\right)+B_{3} \theta_{3}\mathrm{e}^{\theta_{3} x_1}-B_{4} \theta_{4}\mathrm{e}^{-\theta_{4}x_1}=-1,\\
& -\frac{\lambda A_3}{\mu+\sigma^2\theta_3}\left(2\theta_{3}\mathrm{e}^{\theta_{3}x_2}+\theta_{3}^2x_2\mathrm{e}^{\theta_{3}x_2}\right)+\frac{\lambda A_4}{-\mu+\sigma^2\theta_4}\left(-2\theta_{4}\mathrm{e}^{-\theta_{4}x_2}+\theta_{4}^2x_2\mathrm{e}^{-\theta_{4}x_2}\right)+B_{3} \theta_{3}^2\mathrm{e}^{\theta_{3} x_2}+B_{4} \theta_{4}^2\mathrm{e}^{-\theta_{4}x_2} = 0.
\end{aligned}\right.
\end{equation}

Because equation \eqref{eq2:x1x2} is rather complex, we will verify the existence of the solution $0<x_1<x_2$ to \eqref{eq2:x1x2} in the numerical analysis of the next section.

Suppose there exist  $0 < x_1 < x_2$  that solve \eqref{eq2:x1x2}. We show that \eqref{fv2} , \eqref{V3} and \eqref{V4} are a solution to the extended HJB system \eqref{extendedhjb} under the pseudo-exponential discount function \eqref{eq:pseudo_exp_discount}. The crux is to establish the convexity of $V$.

\begin{proposition}\label{convex2}
Suppose there exist  $0 < x_1 < x_2$ that solve \eqref{eq2:x1x2}. Then, \eqref{fv2}, \eqref{V3} and \eqref{V4} are a solution to the extended HJB system \eqref{extendedhjb} under the pseudo-exponential discount function \eqref{eq:pseudo_exp_discount}. In addition, the solution $V$ is decreasing and convex on $[0,+\infty)$.
\end{proposition}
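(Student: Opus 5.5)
Reading the ansatz \eqref{fv2}, we have $f_s(x,t,t)=\delta V_4(x)-\lambda V_3(x)$. Substituting this into \eqref{extendedhjb} and using \eqref{hatpi2}--\eqref{hatxi2}, the whole system reduces to two facts. First, $V_3,V_4$ solve \eqref{eq:V3_ODE}--\eqref{eq:V4_ODE}, which holds by construction together with $f(0,t,s)=0$ and $-\beta(t-s)c-f_x=0$ for $x\ge x_2$. Second, the shape conditions hold: $V'(x)<-1$ on $[0,x_1)$, $V'(x_1)=-1$, $-1<V'<-c$ on $(x_1,x_2)$, and $V'=-c$ on $[x_2,\infty)$. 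Given these, the argmin over $u\in[0,M]$ of $u(1+V'(x))$ is $M$ below $x_1$ and $0$ above it. The term $-c-V'$ is positive on $W^{\hat\Xi}$ and zero on $P^{\hat\Xi}$. It remains to check the first component of the min on $P^{\hat\Xi}$: for $x\ge x_2$ we have $\mu(-c)+\delta(cx-B_5)-\lambda(-cx+A_5)$, and we need this to be $\ge0$. This should follow from $V''(x_2-)=0$ and continuity, provided the left-limit ODE value at $x_2$ is $0$ and the expression is nondecreasing in $x$. The latter holds because its derivative is $(\delta+\lambda)c>0$. So the work is the derivative structure of $V=V_4$.

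Step 1: analyze $V_3$. It is exactly the single-exponential solution $V_i$ of Proposition \ref{convex1} with $\delta_i=\delta$, but $x_1,x_2$ are now not tuned for $V_3$. I would reuse the sign argument from Proposition \ref{convex1}: $D>0$, $A_1<0$, and the case analysis on the signs of $A_3,A_4$. Together with \citet[Lemma 4.1]{shreve1984optimal}, this gives $V_3'<0$ on $[0,\infty)$, and hence also $V_3<0$ for $x>0$ since $V_3(0)=0$.

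Step 2: convexity of $V_4$ on $[x_1,x_2)$. Use the change of variable $\varphi$ as before and set $l(z)=V_4'(\varphi^{-1}(z))$. Differentiating the $V_4$ ODE gives $l''(z)=[\varphi']^{-2}\frac{2}{\sigma^2}(\delta l(z)-\lambda V_3'(\varphi^{-1}(z)))$. Since $V_3'<0$, the forcing term $-\lambda V_3'$ is positive. This is the \textbf{main obstacle}: the sign of $l''$ is no longer immediate. I would argue by contradiction, paralleling the treatment of $m$ in Proposition \ref{convex1}. From $l'(\bar z-)=0$ (that is, $V''(x_2-)=0$), let $z^*$ be the last zero of $l'$ before $\bar z$. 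At $z^*$ the equation forces $\delta l(z^*)\ge\lambda V_3'(\cdot)$. Comparing with $\delta V_3'$ (for which $l_3''\propto\delta l_3<0$) and using $\lambda<\delta$, I would show that $V_4'-V_3'$ keeps a sign, and derive a contradiction. If that fails, I would fall back on the explicit formula \eqref{V4} and check $V_4''>0$ directly in terms of the $A_j,B_j$. This gives $V''>0$ on $[x_1,x_2)$, hence $-1<V'<-c$ there.

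Step 3: the interval $[0,x_1)$. First, $V''(x_1-)=V''(x_1+)>0$ by the ODE jump identity, using $V'(x_1)=-1$, so that the $M(1+V')$ term vanishes. Then I would repeat the $z^*$ contradiction argument of Proposition \ref{convex1} with $\phi$. The combination $\sum\omega_i\delta_i V_i$ is replaced by $\delta V_4-\lambda V_3$, and Step 1 supplies the sign information on $V_3$ that previously came from $m_2'<0$. At the critical point this gives $\delta V_4(x^*)-\lambda V_3(x^*)<-\mu$, which contradicts the monotonicity of $\delta V_4-\lambda V_3$ from $0$ at $x=0$. Hence $V''>0$ on $(0,x_1)$ and $V'<-1$ there. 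Combining Steps 1--3 yields the claimed shape conditions, so $V$ is decreasing and convex and the extended HJB system holds.
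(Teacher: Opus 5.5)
Your reduction of \eqref{extendedhjb} to the ODEs plus the shape conditions on $V'=V_4'$ is correct. Your check that the first entry of the minimum is nonnegative on $P^{\hat\Xi}$ is a step the paper leaves implicit. One slip: that entry is $-c\mu-\delta V_4+\lambda V_3$, so its slope is $(\delta-\lambda)c$, not $(\delta+\lambda)c$; it is still positive since $\lambda<\delta$.

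The genuine gap is Step 2, the heart of the proposition, which you leave as an open ``main obstacle''. The comparison you suggest (``$V_4'-V_3'$ keeps a sign'') is not argued, and it is unclear what contradiction it would give. The fallback of checking $V_4''>0$ from the explicit $A_j,B_j$ is not viable, because $x_1,x_2$ are only implicitly determined by \eqref{eq2:x1x2}.

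The missing mechanism is as follows. At the last zero $z^*$ of $l_4'$, the ODE gives $\delta V_4(x^*)-\lambda V_3(x^*)=\tfrac12\sigma^2V_4''(x^*)+\mu V_4'(x^*)<-c\mu$. The contradiction comes from comparing this with the boundary value $\delta V_4(0)-\lambda V_3(0)=0$. That requires proving $\delta V_4'-\lambda V_3'>0$ on all of $[0,x^*)$, so the argument for the middle interval must pass through $[0,x_1)$. The paper does this in five moves:
\begin{enumerate}
\item The identity $\tfrac12\sigma^2V_4'''(x_2-)=c(\lambda-\delta)<0$ shows $V_4''>0$ just left of $x_2$, so $z^*$ is well defined.
\item From $l_4''(z^*)\ge0$ and $l_4(z^*)<-c$ one gets $l_3(z^*)<-c\delta/\lambda<-c=l_3(\bar z-)$. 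Hence $l_3$ must increase somewhere on $(z^*,\bar z)$.
\item Since $l_3''\propto\delta l_3<0$, this forces $l_3'>0$ on $[0,z_1]$.
\item A first-crossing argument then gives $\delta l_4-\lambda l_3>0$ on $[0,z^*)$. In particular $V_3'(x_1)<-\delta/\lambda<-1$.
\item It follows that $V_3''(x_1-)>V_3''(x_1+)>0$, that $V_3''>0$ on $(0,x_1)$ and $V_4''<0$ on $[0,x_1)$, and that $\delta m_4-\lambda m_3>0$ on $[0,\tilde z)$.
\end{enumerate}

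Separately, the sign information you plan to carry from Step 1 into Steps 2--3 is both unproved and not the one that is needed.
\begin{itemize}
\item \emph{Unproved.} The case analysis on $A_3,A_4$ from Proposition \ref{convex1} gives $V_3'<0$ only on $[x_1,x_2)$. On $[0,x_1)$ one has $V_3'=A_1\theta_1e^{\theta_1x}-A_2\theta_2e^{-\theta_2x}$, with $A_2=-M/\delta-A_1$ of undetermined sign, so $V_3'(0)>0$ is not excluded. Indeed, the paper's own proof explicitly allows $m_3$ to have a zero on $[0,\tilde z]$.
\item \emph{Not what is needed.} In Proposition \ref{convex1}, the fact $m_2'<0$ was used to get $\sum_i\omega_i\delta_i m_i'<\delta_1m'$. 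The analogue here is $\delta m_4'-\lambda m_3'<\delta m_4'$, i.e.\ $m_3'>0$ (convexity of $V_3$). That is what makes $\delta m_4-\lambda m_3$ positive just left of a critical point of $m_4$ and keeps it positive down to $0$. Knowing $V_3'<0$ gives no control of $m_3'$.
\end{itemize}
The paper obtains $m_3'>0$ on $[0,z^{**}]$ from three ingredients: $V_3'(x^{**})\le-\delta/\lambda<-c=V_3'(x_2)$, the concavity of $l_3$ on the middle interval, and \citet[Lemma 4.1]{shreve1984optimal}. Without these, neither your Step 2 nor your Step 3 closes.
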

\begin{proof}
It is sufficient to show $V'(x) < -1$ when $0\le x < x_1$, $V'(x_1)=-1$, $-1 < V'(x) < -c$ when $x_1 < x < x_2$, $V'(x) =-c$ when $x\ge x_2$.

First, consider $V(x)=V_4(x)$ for $x \in [x_2, +\infty)$. It is straightforward that $V^\prime(x)=V_4^\prime(x) = -c < 0$ and $V^{\prime\prime}(x)=V_4^{\prime\prime}(x) =0$ for $x \in [x_2, +\infty)$. 

Second, for $x \in [x_1, x_2)$, since $V^\prime_3(x_2-) = V^\prime_3(x_2+)=V^\prime_4(x_2-) = V^\prime_4(x_2+)=-c < 0$. Similar to the proof of Proposition \ref{convex1}, we can show that $V^\prime_3(x) < 0$ for all $x \in [x_1, x_2)$ by considering different signs of $A_{3}$ and $A_{4}$. According to ODEs \eqref{eq:V4_ODE} and $V_4^{\prime\prime}(x_2-) =0$, we have $\frac{1}{2}\sigma^2V_4^{\prime\prime\prime}(x_2-)=\delta V^\prime_4(x_2-)-\lambda V^\prime_3(x_2-)-\mu V^{\prime\prime}_4(x_2-)=c(\lambda-\delta)<0$. Therefore, there exists $\epsilon_1>0$ sufficiently small such that $V^{\prime\prime}_4(x)>0, \forall x\in(x_2-\epsilon_1,x_2)$.

Next, we show $V_4^{\prime\prime}(x)>0, \forall x\in(x_1,x_2)$. Similar to the proof of Proposition \ref{convex1}, we introduce a change of variable $\varphi(x) := \int_{x_1}^x \mathrm{e}^{\frac{-2 \mu a}{\sigma^2}} \mathrm{d}a$, $x \in [x_1, x_2)$,
and define $l_i(z) := V_i^\prime(\varphi^{-1}(z))$, $z\in[0,\bar z)$, where $\bar z = \int_{x_1}^{x_2} \mathrm{e}^{\frac{-2 \mu a}{\sigma^2}} \mathrm{d}a$. We know that $l_3(z)=V_3^{\prime}(\varphi^{-1}(z))<0, \forall z\in[0,\bar z)$. According to ODEs \eqref{eq:V3_ODE} and \eqref{eq:V4_ODE}, we have $l_3(z)$ and $l_4(z)$ is a solution to the follow ODE:
\begin{equation}\nonumber\begin{aligned}
 &l_3^{\prime\prime}(z) =\left[\varphi^\prime(\varphi^{-1}(z))\right]^{-2} \frac{2 \delta}{\sigma^2} l_3(z)<0,\ z\in[0,\bar z),\\
 &l_4^{\prime\prime}(z) =\left[\varphi^\prime(\varphi^{-1}(z))\right]^{-2} \frac{2 }{\sigma^2} \left[\delta l_4(z)-\lambda l_3(z)\right],\ z\in[0,\bar z).
\end{aligned}\end{equation}

Suppose there exists $z^* \in (0, \bar z)$ such that $l_4^\prime(z^*) = 0$ and $l_4^\prime(z) > 0$ for $z \in (z^*, \bar z)$.  Then $l_4^{\prime\prime}(z^*) \ge 0$, i.e., $\delta l_4(z^*)-\lambda l_3(z^*)\ge 0$. Since $l_4^\prime(z) > 0$ for $z \in (z^*, \bar z)$, we have $l_4(z^*)<l_4(\bar z-)=V_4^\prime(x_2-)=-c<0$. Therefore, $l_3(z^*)\le \frac{\delta}{\lambda}l_4(z^*)<-c\frac{\delta}{\lambda}<-c=l_3(\bar z-)$. Hence, there exists $z_1\in(z^*,\bar z)$ such that $l_3^{\prime}(z_1)>0$. Since $l_3^{\prime\prime}(z)<0, \forall z\in[0,\bar z)$, we know that $l_3^{\prime}(z)\ge l_3^{\prime}(z_1)>0, \forall z\in [0,z_1]$. Since $l_4^{\prime}(z^*)=0$ and $l_3^{\prime}(z^*)>0$, there exists $\epsilon_2>0$ sufficiently small such that $\delta l_4(z)-\lambda l_3(z)>0, \forall z\in(z^*-\epsilon_2,z^*)$.

We then claim that $\delta l_4(z)-\lambda l_3(z)>0$, i.e., $l_4^{\prime\prime}(z) > 0, \forall z\in[0,z^*)$. Otherwise, there exists $z_2\in[0,z^*)$ such that $l_4^{\prime\prime}(z_2)=0$ and $l_4^{\prime\prime}(z)>0, \forall z\in(z_2,z^*)$. Consequently, $l_4^{\prime}(z_2) < l_4^{\prime}(z^*) = 0$. Since $l_3^{\prime}(z)>0, \forall z\in [0,z_1]$, we have $l_3^{\prime}(z_2)>0$ and $\delta l_4^{\prime}(z_2)-\lambda l_3^{\prime}(z_2)<0$. On the other hand, since $\delta l_4(z_2)-\lambda l_3(z_2)=0$ and $\delta l_4(z)-\lambda l_3(z)>0, \forall z\in(z_2,z^*)$, we have $\delta l_4^{\prime}(z_2)-\lambda l_3^{\prime}(z_2)\ge 0$, which is a contradiction.

Similarly, we introduce a change of variable $\phi(x) := \int_{0}^x \mathrm{e}^{\frac{-2 (\mu+M) a}{\sigma^2}} \mathrm{d}a$, $x \in [0, x_1)$,
and define $m_i(z) := V_i^\prime(\phi^{-1}(z))$, $z\in[0,\tilde z)$, where $\tilde z = \int_{0}^{x_1} \mathrm{e}^{\frac{-2 (\mu+M) a}{\sigma^2}} \mathrm{d}a$. According to ODEs \eqref{eq:V3_ODE} and \eqref{eq:V4_ODE}, we have $m_i(z)$ is a solution to the follow ODE:
\begin{equation}\nonumber\begin{aligned}
 &m_3^{\prime\prime}(z) =\left[\phi^\prime(\phi^{-1}(z))\right]^{-2} \frac{2 \delta}{\sigma^2} m_3(z),\ z\in[0,\tilde z),\\
 &m_4^{\prime\prime}(z) =\left[\phi^\prime(\phi^{-1}(z))\right]^{-2} \frac{2 }{\sigma^2} \left[\delta m_4(z)-\lambda m_3(z)\right],\ z\in[0,\bar z).
\end{aligned}\end{equation}

Since $l_3^{\prime}(z)>0, \forall z\in [0,z_1]$, we have $l_3^{\prime}(0)=\left[\varphi^\prime(\varphi^{-1}(0))\right]^{-1}V_3^{\prime\prime}(\varphi^{-1}(0))=\left[\varphi^\prime(x_1)\right]^{-1}V_3^{\prime\prime}(x_1+)>0$,  i.e., $V_3^{\prime\prime}(x_1+)>0$. And $\delta l_4(0)-\lambda l_3(0)>0$, i.e., $V_3^{\prime}(x_1-)=V_3^{\prime}(x_1+)=l_3(0)<\frac{\delta}{\lambda}l_4(0)=\frac{\delta}{\lambda}V_4^{\prime}(x_1+)=-\frac{\delta}{\lambda}<-1$. Therefore, ODEs \eqref{eq:V3_ODE} indicates that
\begin{equation}\nonumber\begin{aligned}
&V^{\prime\prime}_3(x_1-) = \frac{2}{\sigma^2} \left[-(\mu+M) V_3^\prime(x_1-)+\delta V_3(x_1-) - M )\right]\\
&> \frac{2}{\sigma^2} \left[-\mu V_3^\prime(x_1+) + \delta V_3(x_1+)\right]= V_3^{\prime\prime}(x_1+) > 0.
\end{aligned}\end{equation}
Then, we claim that $V^{\prime\prime}_3(x)>0, \forall x\in(0,x_1)$, i.e., $m_3^\prime(z)>0,\forall z\in(0,\tilde z)$. Otherwise, there exists $z_3\in(0,\tilde z)$ such that $m_3^\prime(z_3)=0$ and $m_3^\prime(z)>0, \forall z\in(z_3,\tilde z)$. Thus, $m_3^{\prime\prime}(z_3)\ge 0$. On the other hand, $m_3(z_3)<m_3(\tilde z-)=V_3^\prime(x_1-)<-1<0$  leads to $m_3^{\prime\prime}(z_3)<0$, which is a contradiction.

Besides, since $l_4^{\prime\prime}(z)>0, \forall z\in[0,z^*)$, we have $l_4^{\prime}(0)<l_4^{\prime}(z^*)=0$, i.e., $V_4^{\prime\prime}(x_1+)=V_4^{\prime\prime}(x_1-)<0$, i.e., $m^\prime_4(\tilde z-)<0$. Next, we claim $m^\prime_4(z)<0, \forall z\in[0,\tilde z)$. Otherwise, there exists $z_4\in[0,\tilde z)$ such that $m_4^\prime(z_4)=0$ and $m_4^\prime(z)<0, \forall z\in(z_4,\tilde z)$. Therefore, $m_4^{\prime\prime}(z_4)\le 0$, i.e., $\delta m_4(z_4)-\lambda m_3(z_4)\le 0$. On the other hand, since $m_4(z_4)>m_4(\tilde z-)=l_4(0)$ and  $m_3(z_4)<m_3(\tilde z-)=l_3(0)$, we have $\delta m_4(z_4)-\lambda m_3(z_4)>\delta l_4(0)-\lambda l_3(0)>0$, i.e., $m_4^{\prime\prime}(z_4)> 0$, which is a contradiction. 

Therefore, $m_4(z)>m_4(\tilde z-)=l_4(0), \forall z\in[0,\tilde z)$, $m_3(z)<m_3(\tilde z-)=l_3(0), \forall z\in[0,\tilde z)$, and $\delta m_4(z)-\lambda m_3(z)>\delta l_4(0)-\lambda l_3(0)>0, \forall z\in[0,\tilde z)$. Thus, $\delta V_4^\prime(x)-\lambda V_3^\prime(x)>0, \forall x \in[0,x^*)$, where $x^* = \varphi^{-1}(z^*)$. Hence, $\delta V_4(x^*)-\lambda V_3(x^*)>\delta V_4(0)-\lambda V_3(0)=0$. On the other hand, since $V^{\prime\prime}_4(x^*)=l_4^{\prime}(z^*)=0$ and $V_4^{\prime}(x^*)=l_4(z^*)<l_4(\bar z-)=V_4^{\prime}(x_2-)=-c$, combining with \eqref{eq:V4_ODE}, we have
\begin{equation}\nonumber
\delta V_4(x^*)-\lambda V_3(x^*) = \frac{1}{2} \sigma^2 V_4^{\prime\prime}(x^*) + \mu  V^{\prime}_4(x^*)  < -c\mu<0,
\end{equation}
which is a contradiction. Therefore, there cannot exist such $z^* \in (0, \bar z)$, and $l_4^\prime(z) > 0$ for all $z \in (0, \bar z)$, i.e., $V_4^{\prime\prime}(x) > 0, \forall x\in(x_1,x_2)$. Thus, $-1=V^{\prime}_4(x_1+)<V^{\prime}(x)=V^{\prime}_4(x)<V^{\prime}_4(x_2-)=-c, \forall x\in(x_1,x_2)$.

Finally, for $x \in [0, x_1)$, it is already known that $V_3(0)=V_4(0)= 0$, $V^\prime(x_1)=V^\prime_4(x_1-) = V^\prime_4(x_1+)=-1 < 0$ and $V^{\prime\prime}_4(x_1-)=V^{\prime\prime}_4(x_1+)\ge 0$.

We show $V_4^{\prime\prime}(x)>0, \forall x\in(0,x_1]$. Otherwise, suppose there exists $z^{**} \in (0, \tilde z]$ such that $m_4^\prime(z^{**}) = 0$ and $m_4^\prime(z) > 0$ for $z \in (z^{**}, \tilde z]$.  Then $m_4^{\prime\prime}(z^{**}-) \ge 0$, i.e., $\delta m_4(z^{**})-\lambda m_3(z^{**})\ge 0$. Therefore, $m_4(z^{**})\le m_4(\tilde z)=V^\prime_4(x_1)=-1$ and $m_3(z^{**})\le \frac{\delta}{\lambda}m_4(z^{**})\le -\frac{\delta}{\lambda}<-1<-c$. Thus, $V_3^\prime(x^{**})<-c=V_3^\prime(x_2)$, where $x^{**} = \phi^{-1}(z^{**})$. Thus, there exists $x_0\in [x^{**},x_2)$ such that $V_3^{\prime\prime}(x_0-)>0$. If $V_3^\prime(x_1)>-1$, we can choose $x_0\in [x^{**},x_1]$. If $V_3^\prime(x_1)\le -1$ and $x_0> x_1$, we know that $l_3^{\prime\prime}(z)<0, \forall z\in[0,\bar z)$, thus $l_3^{\prime}(0)=l_3^{\prime}(\varphi(x_1))> l_3^{\prime}(\varphi(x_0))>0$, which indicates $V_3^{\prime\prime}(x_1-)\ge V_3^{\prime\prime}(x_1+)>0$ and we can choose $x_0=x_1 \in [x^{**},x_1]$. Thereby, we can always choose $x_0\in [x^{**},x_1]$ such $m^\prime_3(\phi(x_0)-)>0$.

 If $m_3$ has a zero point in $[0,\tilde z]$, then based on \citet[Lemma 4.1]{shreve1984optimal}, $m^\prime_3$ has no zero point in $[0,\tilde z]$, thus $m^\prime_3(z-)>0, \forall z\in[0,\tilde z]$. Otherwise, $m_3(z)<0, \forall z\in[0,\tilde z]$ and $m_3^{\prime\prime}(z-)<0, \forall z\in[0,\tilde z]$, which indicates $m_3^\prime(z-)\ge m_3^\prime(\phi(x_0)-)>0, \forall z\in[0,\phi(x_0)]$.  In conclusion, we have $m_3^\prime(z-)>0, \forall z\in [0, z^{**}]$. Hereafter, since $m_4^{\prime}(z^{**})=0$, $m_3^{\prime}(z^{**}-)>0$ and $\delta m_4(z^{**})-\lambda m_3(z^{**})\ge 0$, there exists $\epsilon_3>0$ sufficiently small such that $\delta m_4(z)-\lambda m_3(z)>0, \forall z\in(z^{**}-\epsilon_3,z^{**})$.

We then claim that $\delta m_4(z)-\lambda m_3(z)>0$, i.e., $m_4^{\prime\prime}(z) > 0, \forall z\in[0,z^{**})$. Otherwise, there exists $z_5\in[0,z^{**})$ such that $m_4^{\prime\prime}(z_5)=0$ and $m_4^{\prime\prime}(z)>0, \forall z\in(z_5,z^{**})$. Consequently, $m_4^{\prime}(z_5) < m_4^{\prime}(z^{**}) = 0$. Since $m_3^{\prime}(z-)>0, \forall z\in [0,z^{**}]$, we have $m_3^{\prime}(z_5)>0$ and $\delta m_4^{\prime}(z_5)-\lambda m_3^{\prime}(z_5)<0$. On the other hand, since $\delta m_4(z_5)-\lambda m_3(z_5)=0$ and $\delta m_4(z)-\lambda m_3(z)>0, \forall z\in(z_5,z^{**})$, we have $\delta m_4^{\prime}(z_5)-\lambda m_3^{\prime}(z_5)\ge 0$, which is a contradiction. Thus, $\delta m_4(z)-\lambda m_3(z)>0, \forall z\in[0,z^{**})$, i.e., $\delta V_4^{\prime}(x)-\lambda V_3^{\prime}(x)>0, \forall x\in[0,x^{**})$.

Hence, $\delta V_4(x^{**})-\lambda V_3(x^{**})>\delta V_4(0)-\lambda V_3(0)=0$. However, since $V^{\prime\prime}_4(x^{**})=m_4^{\prime}(z^{**})=0$ and $V_4^{\prime}(x^{**})=m_4(z^{**})\le m_4(\tilde z-)=V_4^{\prime}(x_1-)=-1$, combining with \eqref{eq:V4_ODE}, we have
\begin{equation}\nonumber
\delta V_4(x^{**})-\lambda V_3(x^{**}) = \frac{1}{2} \sigma^2 V_4^{\prime\prime}(x^{**}) + (\mu+M)  V^{\prime}_4(x^{**})+M  \le -\mu<0,
\end{equation}
which is a contradiction. Therefore, there cannot exist such $z^{**} \in (0, \tilde z]$, and $m_4^\prime(z) > 0$ for all $z \in (0, \bar z]$, i.e., $V_4^{\prime\prime}(x) > 0, \forall x\in(0,x_1]$. Thus, $V^{\prime}(x)=V^{\prime}_4(x)<V^{\prime}_4(x_2-)=-1, \forall x\in[0,x_1)$. 

Hence, $V=V_4$ is decreasing and convex on $[0,+\infty)$ and \eqref{fv2}, \eqref{V3} and \eqref{V4} are a solution to the extended HJB system \eqref{extendedhjb} under the pseudo-exponential discount function \eqref{eq:pseudo_exp_discount}.
\end{proof}

\begin{remark}
Suppose there exist  $0 < x_1 < x_2$ that solve \eqref{eq2:x1x2}. Similar to Remark \ref{differ}, although we have the solution \eqref{fv2}, \eqref{V3} and \eqref{V4} satisfy
\begin{equation}
\begin{aligned}
f\in &C^{1,1,1}\left(\left\{(x,t,s)\mid (x,t)\in\mathcal{\tilde{Q}}, s\in [0,t]\right\}\right)\\&\bigcap C^{2,1,1}\left(\left\{(x,t,s)\mid (x,t)\in\mathcal{\tilde{Q}},x\in[0,x_1)\cap(x_1,x_2)\cap(x_2,+\infty), s\in [0,t]\right\}\right),\end{aligned}\nonumber
\end{equation}
if we use $f_{xx}(x_1-,t,s)$ ($f_{xx}(x_2-,t,s)$) when $x=x_1$ ($x=x_2$) in the Verification Theorem \ref{verificationthm} and the Mild Verification Theorem \ref{mildverificationthm}, the It\^{o}--Tanaka--Meyer formula still holds (see \citet[Theorem 3.1 and Remark 3.3]{peskir2007change}), thus the proof remains valid. 
\end{remark}

\begin{proposition}\label{equilver2}
Suppose there exist  $0 < x_1 < x_2$ that solve \eqref{eq2:x1x2}. Similar to the proof of Proposition \ref{equilver}, we can prove that the candidate regular-singular control law $\left(\hat\Pi,\hat\Xi\right)$ defined in \eqref{hatpi2} and \eqref{hatxi2} is an equilibrium regular-singular control law	  and $V$ in \eqref{fv2} and \eqref{V4} is the corresponding equilibrium value function for the objective \eqref{objexmple} under the pseudo-exponential discount function \eqref{eq:pseudo_exp_discount}. In addition, $\left(\hat\Pi,\hat\Xi\right)$ is a mild equilibrium.
\end{proposition}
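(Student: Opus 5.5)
We will check the six conditions of Theorem \ref{verificationthm}, mirroring the proof of Proposition \ref{equilver}, for the pseudo-exponential ansatz \eqref{fv2}. The mild-equilibrium claim will then follow by repeating the same checks against Theorem \ref{mildverificationthm}; since $c>0$ there, the running cost enters as $-c<0$, so the non-positivity assumption holds.

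\textbf{Conditions (1)--(3).} These come from Proposition \ref{convex2} and the remark following it.

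Regularity is handled by using the left second derivatives at $x_1$ and $x_2$, which is justified by \citet{peskir2007change}.

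The HJB system \eqref{extendedhjb} reduces to the ODEs \eqref{eq:V3_ODE}--\eqref{eq:V4_ODE}, using
\[
f_s(x,t,t)=\delta V_4(x)-\lambda V_3(x),
\]
which is obtained by differentiating \eqref{fv2} in $s$ at $s=t$. Two facts then give the minimum structure and the argmin property of $\hat\Pi$:
\begin{itemize}
\item the convexity of $V=V_4$, with $V'<-1$ on $[0,x_1)$, $-1<V'<-c$ on $(x_1,x_2)$ and $V'=-c$ beyond $x_2$;
\item the identity $\min_u\{(1+V')u\}$, attained at $M$ or $0$.
\end{itemize}
On $P^{\hat\Xi}$ the constraint $-\beta(t-s)c-f_x=0$ follows from $V_3'=V_4'=-c$ for $x\ge x_2$. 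The boundary values hold because $V_3(0)=V_4(0)=0$.

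\textbf{Admissibility of $(\hat\Pi,\hat\Xi)$.} Existence and uniqueness of a strong solution to the reflected SDE follows from \citet[Theorem 3.1]{ZHANG1994135}, since the drift is bounded. For integrability, $\beta$ is dominated by $C\mathrm{e}^{-\delta' t}$ for any $\delta'\in(0,\delta)$. We can therefore bound $\int\beta\,\mathrm{d}\hat\xi$ by the same blockwise argument as in Proposition \ref{equilver}, with $\mathrm{e}^{-\delta_i n}$ replaced by $C\mathrm{e}^{-\delta' n}$. The effort term is bounded by $M\int_0^\infty\beta<\infty$.

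\textbf{Conditions (4) and (5).} Since $\hat X_r\in[0,\max\{x,x_2\}]$, the functions $V'$, $V_3'$, $V_4'$, $V_3$ and $V_4$ are bounded along the path. The factor $(r-s)\mathrm{e}^{-\delta(r-s)}$ is also bounded. Hence the martingale condition (4) holds.

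For (5), $f(\hat X_{\tau_n},\tau_n,s)$ tends to $0$ on $\{\tau<\infty\}$ by the boundary condition, and on $\{\tau=\infty\}$ through the factor $\mathrm{e}^{-\delta(n-s)}(1+\lambda(n-s))$. Dominated convergence then gives the limit. Finally,
\[
V(x)-f(x,t,t)=V_4-V_4=0 .
\]

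\textbf{Condition (6).} On $[t,t+h_0]$ we bound $X^{u,\eta}$ above by
\[
x+(\mu+M)h_0+\sigma\max_{[t,t+h_0]}(B_r-B_t).
\]
$V_3$, $V_4$ and their derivatives grow at most linearly, because they are linear beyond $x_2$. So each integrand in \eqref{martingale2}--\eqref{integrability2} is dominated by $K_1\mathrm{e}^{K_2\max(B_r-B_t)}$, which is integrable. The $f_s$ term now also contains a factor $\lambda(r-s)$, but this is bounded for $r,s$ in a compact set, so it causes no difficulty.

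\textbf{Main obstacle.} The step most likely to need care is the integrability of $\int\beta\,\mathrm{d}\hat\xi$ and the limit in (5). Both are immediate for mixed exponentials, but for the pseudo-exponential discount they need the domination of $\beta$ by a pure exponential noted above. Everything else transfers from Proposition \ref{equilver} essentially verbatim.
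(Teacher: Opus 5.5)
Your proposal is correct and is essentially the paper's own argument. The paper gives no separate proof beyond saying that the proof of Proposition \ref{equilver} carries over: one checks conditions (1)--(6) of Theorem \ref{verificationthm}, and then of Theorem \ref{mildverificationthm} (using that the dividend term enters with non-positive cost $-c$), for the ansatz \eqref{fv2}, exactly as you do. The pseudo-exponential adaptations you write out are precisely the details the paper leaves implicit: $f_s(x,t,t)=\delta V_4(x)-\lambda V_3(x)$, the bound $\beta(t)\le C\mathrm{e}^{-\delta' t}$ with $\delta'\in(0,\delta)$ in the blockwise estimate of $\mathbb{E}\int\beta\,\mathrm{d}\hat\xi$, and the decay $\mathrm{e}^{-\delta(n-s)}\bigl(1+\lambda(n-s)\bigr)\to 0$ in condition (5).
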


Proposition \ref{equilver2} shows, if there exist  $0 < x_1 < x_2$ that solve \eqref{eq2:x1x2},  the equilibrium regular-singular control law is to apply the maximal level of effort when the surplus is below the effort threshold $x_1$ and to pay the dividend once the surplus exceeds the dividend threshold $x_2$.

Suppose there not exist  $0 < x_1 < x_2$  that solve \eqref{eq2:x1x2}. Then we set $x_1=0$, and we assume that there exists $ 0< x_2$ such that $-1\le V'(x) < -c$ when $0\le x < x_2$, $V'(x) =-c$ when $x\ge x_2$. Then the candidate regular-singular control law $\left(\hat\Pi,\hat\Xi\right)$ is defined as 
\begin{equation}\label{hatpideg2}
\hat \Pi(x,t) := 0,\ \forall (x,t)\in\mathcal{\tilde{Q}},
\end{equation}
and
\begin{equation}
\begin{aligned}
&W^{\hat{\Xi}}:=\left\{(x,t)\in\mathcal{\tilde{Q}}|0\leq x< x_2\right\},\\
&P^{\hat{\Xi}}:=\left\{(x,t)\in\mathcal{\tilde{Q}}|x\ge x_2\right\}.
\end{aligned}\label{hatxideg2}
\end{equation}
For fixed $0< x_2$, suppose that $V_3(x)$ and $V_4(x)$ are given by the systems of ODEs
\begin{equation}\label{eq:V3_ODE2}
\begin{cases}
\frac{1}{2} \sigma^2 V^{\prime\prime}_3(x) + \mu V^\prime_3(x) - \delta V_3(x) = 0, \qquad & x \in [0, x_2),\\
-c- V^\prime_3(x) = 0, & x \in [x_2, +\infty),\\
V_3(0)=0,
\end{cases}
\end{equation}
and
\begin{equation}\label{eq:V4_ODE2}
\begin{cases}
\frac{1}{2} \sigma^2 V^{\prime\prime}_4(x) + \mu V^\prime_4(x) - \delta V_4(x)+\lambda V_3(x) = 0, \qquad & x \in [0, x_2),\\
-c- V^\prime_4(x) = 0, & x \in [x_2, +\infty),\\
V_4(0)=0.
\end{cases}
\end{equation}
Thus a general solution to the ODEs \eqref{eq:V3_ODE2} has the form
\begin{equation}\label{V3deg}
V_3(x) = \left\{\begin{aligned}
& \tilde{A}_{3} \mathrm{e}^{\theta_{3} x} + \tilde{A}_{4} \mathrm{e}^{-\theta_{4}x}, & x \in [0, x_2),\\
& -cx + \tilde{A}_{5}, \qquad & x \in [x_2, +\infty),
\end{aligned}\right.
\end{equation}
where $\tilde{A}_{i}$ are the constants to be determined.

Now to determine the values of $\tilde{A}_{i}$, we use the boundary condition and smooth fit principle, that is,
\begin{equation}\nonumber
V_3(0) = 0,\ V_3(x_2 -) = V_3(x_2+),\ V^\prime_3(x_2 -) = V^\prime_3(x_2+).
\end{equation}
Then we have
\begin{equation}\nonumber
\begin{aligned}
 \tilde{A}_{3} =- \tilde{A}_{4}=-\frac{c}{\theta_{3} \mathrm{e}^{\theta_{3}x_2}+\theta_{4}\mathrm{e}^{-\theta_{4}x_2}},\end{aligned}
\end{equation}
and $\tilde{A}_{5}=cx_2+\tilde{A}_{3} \mathrm{e}^{\theta_{3}x_2} + \tilde{A}_{4} \mathrm{e}^{-\theta_{4}x_2}$.

After obtaining $V_3$, a general solution to the ODEs \eqref{eq:V4_ODE2} has the form
\begin{equation}\label{V4deg}
V_4(x) = \left\{\begin{aligned}
& -\frac{\lambda \tilde{A}_3}{\mu+\sigma^2\theta_3}x\mathrm{e}^{\theta_{3}x}+\frac{\lambda \tilde{A}_4}{-\mu+\sigma^2\theta_4}x\mathrm{e}^{-\theta_{4}x}+\tilde{B}_{3} \mathrm{e}^{\theta_{3} x} + \tilde{B}_{4} \mathrm{e}^{-\theta_{4}x}, & x \in [0, x_2),\\
& -cx +\tilde{B}_{5}, \qquad & x \in [x_2, +\infty),
\end{aligned}\right.
\end{equation}
where $\tilde{B}_{i}$ are the constants to be determined.

Then, we use the boundary condition and smooth fit principle to determine the values of $\tilde{B}_{i}$, that is,
\begin{equation}\nonumber
V_4(0) = 0,\ V_4(x_2 -) = V_4(x_2+),\ V^\prime_4(x_2 -) = V^\prime_4(x_2+).
\end{equation}
Then $\tilde{B}_{i}$ satisfy the following linear equation system:
\begin{equation}\nonumber
\left\{\begin{aligned}
& \tilde{B}_{3} + \tilde{B}_{4} = 0, \\
& -\frac{\lambda \tilde{A}_3}{\mu+\sigma^2\theta_3}x_2\mathrm{e}^{\theta_{3}x_2}+\frac{\lambda \tilde{A}_4}{-\mu+\sigma^2\theta_4}x_2\mathrm{e}^{-\theta_{4}x_2}+\tilde{B}_{3} \mathrm{e}^{\theta_{3} x_2} + \tilde{B}_{4} \mathrm{e}^{-\theta_{4}x_2} = -cx_2 + \tilde{B}_{5},\\
& -\frac{\lambda \tilde{A}_3}{\mu+\sigma^2\theta_3}\left(\mathrm{e}^{\theta_{3}x_2}+\theta_{3}x_2\mathrm{e}^{\theta_{3}x_2}\right)+\frac{\lambda \tilde{A}_4}{-\mu+\sigma^2\theta_4}\left(\mathrm{e}^{-\theta_{4}x_2}-\theta_{4}x_2\mathrm{e}^{-\theta_{4}x_2}\right)+\tilde{B}_{3} \theta_{3}\mathrm{e}^{\theta_{3} x_2}-\tilde{B}_{4} \theta_{4}\mathrm{e}^{-\theta_{4}x_2}  = -c.
\end{aligned}\right.
\end{equation}
Solving the above system of linear equations, we have

\begin{equation}\nonumber
\begin{aligned}
 \tilde{B}_{3} =- \tilde{B}_{4}&=\frac{1}{\theta_{3} \mathrm{e}^{\theta_{3}x_2}+\theta_{4}\mathrm{e}^{-\theta_{4}x_2}}\left[-c+\frac{\lambda \tilde{A}_3}{\sqrt{\mu^2 + 2 \delta \sigma^2}}\left(\mathrm{e}^{\theta_{3}x_2}+\theta_{3}x_2\mathrm{e}^{\theta_{3}x_2}+\mathrm{e}^{-\theta_{4}x_2}-\theta_{4}x_2\mathrm{e}^{-\theta_{4}x_2}\right)\right]\\
 &=-\frac{c}{\theta_{3} \mathrm{e}^{\theta_{3}x_2}+\theta_{4}\mathrm{e}^{-\theta_{4}x_2}}\left[1+\frac{\lambda }{\sqrt{\mu^2 + 2 \delta \sigma^2}}\frac{\mathrm{e}^{\theta_{3}x_2}+\theta_{3}x_2\mathrm{e}^{\theta_{3}x_2}+\mathrm{e}^{-\theta_{4}x_2}-\theta_{4}x_2\mathrm{e}^{-\theta_{4}x_2}}{\theta_{3} \mathrm{e}^{\theta_{3}x_2}+\theta_{4}\mathrm{e}^{-\theta_{4}x_2}}\right],\end{aligned}
\end{equation}
and $\tilde{B}_{5}=cx_2-\frac{\lambda \tilde{A}_3}{\mu+\sigma^2\theta_3}x_2\mathrm{e}^{\theta_{3}x_2}+\frac{\lambda \tilde{A}_4}{-\mu+\sigma^2\theta_4}x_2\mathrm{e}^{-\theta_{4}x_2}+\tilde{B}_{3} \mathrm{e}^{\theta_{3} x_2} + \tilde{B}_{4} \mathrm{e}^{-\theta_{4}x_2}$.

It remains to determine the value of $x_2$, which are solved from $V^{\prime\prime}_4(x_2-) = 0$, that is,
\begin{equation}\label{eq2:x1x22}
 -\frac{\lambda \tilde{A}_3}{\mu+\sigma^2\theta_3}\left(2\theta_{3}\mathrm{e}^{\theta_{3}x_2}+\theta_{3}^2x_2\mathrm{e}^{\theta_{3}x_2}\right)+\frac{\lambda \tilde{A}_4}{-\mu+\sigma^2\theta_4}\left(-2\theta_{4}\mathrm{e}^{-\theta_{4}x_2}+\theta_{4}^2x_2\mathrm{e}^{-\theta_{4}x_2}\right)+\tilde{B}_{3} \theta_{3}^2\mathrm{e}^{\theta_{3} x_2}+\tilde{B}_{4} \theta_{4}^2\mathrm{e}^{-\theta_{4}x_2} = 0,
\end{equation}
i.e.,
\begin{equation}\nonumber\begin{aligned}
&\left[-2\theta_{3}\mathrm{e}^{\theta_{3}x_2}-\theta_{3}^2x_2\mathrm{e}^{\theta_{3}x_2}+2\theta_{4}\mathrm{e}^{-\theta_{4}x_2}-\theta_{4}^2x_2\mathrm{e}^{-\theta_{4}x_2}+\left(\theta_{3}^2\mathrm{e}^{\theta_{3} x_2}- \theta_{4}^2\mathrm{e}^{-\theta_{4}x_2}\right)\frac{\sqrt{\mu^2 + 2 \delta \sigma^2}}{\lambda}\right]\left(\theta_{3} \mathrm{e}^{\theta_{3}x_2}+\theta_{4}\mathrm{e}^{-\theta_{4}x_2}\right)\\&+\left(\theta_{3}^2\mathrm{e}^{\theta_{3} x_2}- \theta_{4}^2\mathrm{e}^{-\theta_{4}x_2}\right)\left(\mathrm{e}^{\theta_{3}x_2}+\theta_{3}x_2\mathrm{e}^{\theta_{3}x_2}+\mathrm{e}^{-\theta_{4}x_2}-\theta_{4}x_2\mathrm{e}^{-\theta_{4}x_2}\right)=0,
\end{aligned}
\end{equation}
i.e.,
\begin{equation}\label{eq2:x1x23}\begin{aligned}
&\mathrm{e}^{2\theta_{3}x_2}\left(-\theta_{3}^2+\theta_{3}^3\frac{\sqrt{\mu^2 + 2 \delta \sigma^2}}{\lambda}\right)+\mathrm{e}^{(\theta_{3}-\theta_{4})x_2}\left(\theta_{3}^2-\theta_{4}^2+\theta_{3}\theta_{4}(\theta_{3}-\theta_{4})\frac{\sqrt{\mu^2 + 2 \delta \sigma^2}}{\lambda}\right)\\&+\mathrm{e}^{-2\theta_{4}x_2}\left(\theta_{4}^2-\theta_{4}^3\frac{\sqrt{\mu^2 + 2 \delta \sigma^2}}{\lambda}\right)-2x_2\mathrm{e}^{(\theta_{3}-\theta_{4})x_2}\theta_{3}\theta_{4}(\theta_{3}+\theta_{4})=0.
\end{aligned}
\end{equation}

Let $G(x):=\mathrm{e}^{2\theta_{3}x}\left(-\theta_{3}^2+\theta_{3}^3\frac{\sqrt{\mu^2 + 2 \delta \sigma^2}}{\lambda}\right)+\mathrm{e}^{(\theta_{3}-\theta_{4})x}\left(\theta_{3}^2-\theta_{4}^2+\theta_{3}\theta_{4}(\theta_{3}-\theta_{4})\frac{\sqrt{\mu^2 + 2 \delta \sigma^2}}{\lambda}\right)\\+\mathrm{e}^{-2\theta_{4}x}\left(\theta_{4}^2-\theta_{4}^3\frac{\sqrt{\mu^2 + 2 \delta \sigma^2}}{\lambda}\right)-2x\mathrm{e}^{(\theta_{3}-\theta_{4})x}\theta_{3}\theta_{4}(\theta_{3}+\theta_{4})$, it is easy to see $G(0)=(\theta_{3}- \theta_{4})(\theta_{3}+\theta_{4})^2\frac{\sqrt{\mu^2 + 2 \delta \sigma^2}}{\lambda}<0$. Meanwhile, thanks to $\lambda<\delta$ and $\mu^2 +  \delta \sigma^2-\mu\sqrt{\mu^2 + 2 \delta \sigma^2}>0$, we know $\theta_3\frac{\sqrt{\mu^2 + 2 \delta \sigma^2}}{\lambda}-1=\frac{\mu^2 +  2\delta \sigma^2-\mu\sqrt{\mu^2 + 2 \delta \sigma^2}-\lambda \sigma^2}{\lambda \sigma^2}>\frac{\mu^2 +  \delta \sigma^2-\mu\sqrt{\mu^2 + 2 \delta \sigma^2}}{\lambda \sigma^2}>0$. Then, it is easy to show that $G(+\infty)=\lim\limits_{x\uparrow +\infty}\left(\theta_3\frac{\sqrt{\mu^2 + 2 \delta \sigma^2}}{\lambda}-1\right)\theta_3^2\mathrm{e}^{2\theta_{3}x}=+\infty>0$. Thus, \eqref{eq2:x1x23} (i.e., \eqref{eq2:x1x22}) has a solution $x_2>0$. % $G^{\prime}(x)>0, \forall x\in [0,+\infty)$??, $x_2>0$ unique??.  

Let
\begin{equation}\nonumber
\begin{aligned}
H(x_2):=&(\theta_3+\theta_4)\left(-\mathrm{e}^{\theta_{3}x_2}-\theta_{3}x_2\mathrm{e}^{\theta_{3}x_2}-\mathrm{e}^{-\theta_{4}x_2}+\theta_{4}x_2\mathrm{e}^{-\theta_{4}x_2}\right)\\&+2\left(\theta_{3} \mathrm{e}^{\theta_{3}x_2}+\theta_{4}\mathrm{e}^{-\theta_{4}x_2}\right)-\frac{\sqrt{\mu^2 + 2 \delta \sigma^2}}{\lambda }(\theta_3+\theta_4)\left(\theta_{3} \mathrm{e}^{\theta_{3}x_2}+\theta_{4}\mathrm{e}^{-\theta_{4}x_2}\right)\\
=&-(\theta_3+\theta_4)x_2\left(\theta_{3} \mathrm{e}^{\theta_{3}x_2}-\theta_{4}\mathrm{e}^{-\theta_{4}x_2}\right)+(\theta_3-\theta_4)\left(\mathrm{e}^{\theta_{3}x_2}-\mathrm{e}^{-\theta_{4}x_2}\right)\\&-\frac{\sqrt{\mu^2 + 2 \delta \sigma^2}}{\lambda }(\theta_3+\theta_4)\left(\theta_{3} \mathrm{e}^{\theta_{3}x_2}+\theta_{4}\mathrm{e}^{-\theta_{4}x_2}\right).\end{aligned}
\end{equation}

When $\theta_{3} \mathrm{e}^{\theta_{3}x_2}-\theta_{4}\mathrm{e}^{-\theta_{4}x_2}\ge 0$, we have $H(x_2)<0$. When $\theta_{3} \mathrm{e}^{\theta_{3}x_2}-\theta_{4}\mathrm{e}^{-\theta_{4}x_2}< 0$, we have $x_2<\frac{1}{\theta_3+\theta_4}\ln{\frac{\theta_{4}}{\theta_{3}}}$, and we know

\begin{equation}\nonumber
\begin{aligned}
H(x_2)<&-\ln{\frac{\theta_{4}}{\theta_{3}}}\left(\theta_{3} \mathrm{e}^{\theta_{3}x_2}-\theta_{4}\mathrm{e}^{-\theta_{4}x_2}\right)+(\theta_3-\theta_4)\left(\mathrm{e}^{\theta_{3}x_2}-\mathrm{e}^{-\theta_{4}x_2}\right)\\&-\frac{\sqrt{\mu^2 + 2 \delta \sigma^2}}{\lambda }(\theta_3+\theta_4)\left(\theta_{3} \mathrm{e}^{\theta_{3}x_2}+\theta_{4}\mathrm{e}^{-\theta_{4}x_2}\right)\\
<& \theta_4\ln{\frac{\theta_{4}}{\theta_{3}}}\mathrm{e}^{-\theta_{4}x_2}+(\theta_4-\theta_3)\mathrm{e}^{-\theta_{4}x_2}-\frac{\sqrt{\mu^2 + 2 \delta \sigma^2}}{\lambda }(\theta_3+\theta_4)\theta_{4}\mathrm{e}^{-\theta_{4}x_2}\\
<&\theta_4\ln{\frac{\theta_{4}}{\theta_{3}}}\mathrm{e}^{-\theta_{4}x_2}+(\theta_4-\theta_3)\mathrm{e}^{-\theta_{4}x_2}-\frac{\sqrt{\mu^2 + 2 \delta \sigma^2}}{\delta }(\theta_3+\theta_4)\theta_{4}\mathrm{e}^{-\theta_{4}x_2}\\
=&\theta_4\ln{\frac{\theta_{4}}{\theta_{3}}}\mathrm{e}^{-\theta_{4}x_2}+(\theta_4-\theta_3)\mathrm{e}^{-\theta_{4}x_2}-\frac{\theta_3+\theta_4}{\theta_3\theta_4}(\theta_3+\theta_4)\theta_{4}\mathrm{e}^{-\theta_{4}x_2}\\
=&\theta_3\mathrm{e}^{-\theta_{4}x_2}\left(\frac{\theta_{4}}{\theta_{3}}\ln{\frac{\theta_{4}}{\theta_{3}}}+\frac{\theta_{4}}{\theta_{3}}-1-\frac{\theta_{4}^2}{\theta_{3}^2}-2\frac{\theta_{4}}{\theta_{3}}-1\right)\\
<&\theta_3\mathrm{e}^{-\theta_{4}x_2}\left(\frac{\theta_{4}}{\theta_{3}}\left(\frac{\theta_{4}}{\theta_{3}}-1\right)+\frac{\theta_{4}}{\theta_{3}}-1-\frac{\theta_{4}^2}{\theta_{3}^2}-2\frac{\theta_{4}}{\theta_{3}}-1\right)<0.\end{aligned}
\end{equation}

\begin{proposition}\label{cdeg2}
Suppose $0<c\le -\frac{\sqrt{\mu^2 + 2 \delta \sigma^2}}{\lambda H(x_2) }\left(\theta_{3} \mathrm{e}^{\theta_{3}x_2}+\theta_{4}\mathrm{e}^{-\theta_{4}x_2}\right)^2$, where $x_2>0$ is the solution to \eqref{eq2:x1x23}. Then, \eqref{fv2}, \eqref{V3deg} and \eqref{V4deg} are a solution to the extended HJB system \eqref{extendedhjb} under the pseudo-exponential discount function \eqref{eq:pseudo_exp_discount}. In addition, the solution $V$ is decreasing and convex on $[0,+\infty)$. Furthermore, the candidate regular-singular control law $\left(\hat\Pi,\hat\Xi\right)$ defined in \eqref{hatpideg2} and \eqref{hatxideg2} is an equilibrium regular-singular control law and $V$ in \eqref{fv2} and \eqref{V4deg} is the corresponding equilibrium value function for the objective \eqref{objexmple} under the pseudo-exponential discount function \eqref{eq:pseudo_exp_discount}. Besides, $\left(\hat\Pi,\hat\Xi\right)$ is a mild equilibrium.

\end{proposition}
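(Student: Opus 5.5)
The plan is to follow the degenerate mixture case (Proposition \ref{cdeg}), replacing the convexity argument by the two-function argument of Proposition \ref{convex2} with the effort threshold collapsed to $x_1=0$.

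\textbf{Reduction.} By construction, \eqref{fv2}, \eqref{V3deg} and \eqref{V4deg} satisfy the ODEs \eqref{eq:V3_ODE2}, \eqref{eq:V4_ODE2}, the boundary conditions at $0$, and $C^1$ fit at $x_2$. Also $V_4''(x_2\pm)=0$ by \eqref{eq2:x1x22}. So the extended HJB system \eqref{extendedhjb} with $\hat\Pi\equiv 0$ and the regions \eqref{hatxideg2} holds once two facts are shown.
\begin{enumerate}
\item[(i)] $V=V_4$ satisfies $-1\le V'(x)<-c$ on $[0,x_2)$. Since $\hat\Pi\equiv0$, the minimization over $u\in[0,M]$ of $u(1+V')$ is attained at $u=0$ exactly when $1+V'\ge0$, and $-c-V'>0$ is exactly the waiting region.
\item[(ii)] $V$ is convex.
\end{enumerate}
Both follow from $V_4''>0$ on $(0,x_2)$ together with $V_4'(0)\ge -1$.

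\textbf{Convexity.} First, $\tilde A_3=-\tilde A_4<0$ gives $V_3'(x)=\tilde A_3(\theta_3e^{\theta_3x}+\theta_4e^{-\theta_4x})<0$ on $[0,x_2)$. Next, \eqref{eq:V4_ODE2} with $V_4''(x_2-)=0$ gives $\tfrac12\sigma^2V_4'''(x_2-)=c(\lambda-\delta)<0$, so $V_4''>0$ just left of $x_2$.

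Then repeat the middle-interval argument of Proposition \ref{convex2} verbatim with $x_1=0$. Use the change of variables $\varphi(x)=\int_0^x e^{-2\mu a/\sigma^2}\,\mathrm da$ and set $l_i=V_i'\circ\varphi^{-1}$, so that $l_3''<0$. Suppose $z^*$ is the largest zero of $l_4'$. Then:
\begin{itemize}
\item $l_3(z^*)\le\frac{\delta}{\lambda}l_4(z^*)<-c$, which forces $l_3'>0$ on $[0,z^*]$;
\item the first-touching argument then gives $\delta l_4-\lambda l_3>0$ on $[0,z^*)$;
\item integrating from $0$, where $\delta V_4(0)-\lambda V_3(0)=0$, yields $\delta V_4(x^*)-\lambda V_3(x^*)>0$;
\item but the ODE at $x^*$ gives $\delta V_4(x^*)-\lambda V_3(x^*)=\mu V_4'(x^*)<-c\mu<0$.
\end{itemize}
This contradiction gives $V_4''>0$ on $(0,x_2)$. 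This step is simpler than in Proposition \ref{convex2}, since there is no lower interval to handle.

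\textbf{Computing $V_4'(0)$ (main obstacle).} The hardest step is expressing $V_4'(0)$ in closed form so that $V_4'(0)\ge-1$ is exactly the assumed bound on $c$. The key simplification is $\mu+\sigma^2\theta_3=-\mu+\sigma^2\theta_4=\sqrt{\mu^2+2\delta\sigma^2}$. Together with $\tilde A_4=-\tilde A_3$ and $\tilde B_4=-\tilde B_3$, this gives
\[
V_4'(0)=-\frac{2\lambda\tilde A_3}{\sqrt{\mu^2+2\delta\sigma^2}}+(\theta_3+\theta_4)\tilde B_3 .
\]
Substituting the explicit $\tilde A_3$ and $\tilde B_3$ and putting everything over $\sqrt{\mu^2+2\delta\sigma^2}\,(\theta_3e^{\theta_3x_2}+\theta_4e^{-\theta_4x_2})^2$, I expect, after algebra to be checked carefully,
\[
V_4'(0)=\frac{c\lambda H(x_2)}{\sqrt{\mu^2+2\delta\sigma^2}\,\bigl(\theta_3e^{\theta_3x_2}+\theta_4e^{-\theta_4x_2}\bigr)^2}.
\]
Since $H(x_2)<0$ was shown just before the statement, $V_4'(0)\ge-1$ is then equivalent to the hypothesis on $c$. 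Monotonicity of $V'$ then gives $-1\le V'(x)<V'(x_2-)=-c$ on $[0,x_2)$, which proves (i) and (ii).

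\textbf{Verification.} The equilibrium and mild-equilibrium claims are obtained as in Proposition \ref{equilver}, checking the conditions of Theorems \ref{verificationthm} and \ref{mildverificationthm}. The law with $\hat\Pi\equiv0$ and reflection at $x_2$ is admissible by \citet[Theorem 3.1]{ZHANG1994135}. The state stays in $[0,x_2]$, so the martingale and limit conditions (4)–(5) follow from boundedness. Condition (6) follows from the exponential moments of the running maximum of Brownian motion. $f$ is $C^2$ in $x$ off $x_2$, and its second derivative at $x_2$ is handled as in Remark \ref{differ}.
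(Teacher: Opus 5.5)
Your proposal is correct and follows the paper's proof essentially step for step. You get convexity of $V_4$ on $(0,x_2)$ from the middle-interval argument of Proposition \ref{convex2} with $x_1=0$, then combine the closed form $V_4'(0)=\frac{c\lambda H(x_2)}{\sqrt{\mu^2+2\delta\sigma^2}\,(\theta_3e^{\theta_3x_2}+\theta_4e^{-\theta_4x_2})^2}$ (which I checked) with $H(x_2)<0$, and finish with verification as in Proposition \ref{equilver}. Your reduction and the paper both leave one routine item implicit: the HJB inequality on the action region. It holds because for $x\ge x_2$ the first term of the minimum equals $c(\delta-\lambda)(x-x_2)\ge 0$.
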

\begin{proof}
Using the same argument as Proposition \ref{convex2}, we know that $V^{\prime\prime}=V_4^{\prime\prime}>0, \forall x\in(0,x_2)$. Therefore, \eqref{fv2}, \eqref{V3deg} and \eqref{V4deg} are a solution to the extended HJB system \eqref{extendedhjb} under the pseudo-exponential discount function \eqref{eq:pseudo_exp_discount} if $V_4^{\prime}(0)\ge -1$, i.e.,
\begin{equation}\label{vprime0}
\begin{aligned}
V_4^{\prime}(0)=&-\frac{\lambda \tilde{A}_3}{\mu+\sigma^2\theta_3}+\frac{\lambda \tilde{A}_4}{-\mu+\sigma^2\theta_4}+\tilde{B}_{3} \theta_{3}-\tilde{B}_{4} \theta_{4}=\frac{c\lambda }{\sqrt{\mu^2 + 2 \delta \sigma^2}}\frac{2}{\theta_{3} \mathrm{e}^{\theta_{3}x_2}+\theta_{4}\mathrm{e}^{-\theta_{4}x_2}}\\&-\frac{c(\theta_3+\theta_4)}{\theta_{3} \mathrm{e}^{\theta_{3}x_2}+\theta_{4}\mathrm{e}^{-\theta_{4}x_2}}\left[1+\frac{\lambda }{\sqrt{\mu^2 + 2 \delta \sigma^2}}\frac{\mathrm{e}^{\theta_{3}x_2}+\theta_{3}x_2\mathrm{e}^{\theta_{3}x_2}+\mathrm{e}^{-\theta_{4}x_2}-\theta_{4}x_2\mathrm{e}^{-\theta_{4}x_2}}{\theta_{3} \mathrm{e}^{\theta_{3}x_2}+\theta_{4}\mathrm{e}^{-\theta_{4}x_2}}\right]\\
=&\frac{c}{\left(\theta_{3} \mathrm{e}^{\theta_{3}x_2}+\theta_{4}\mathrm{e}^{-\theta_{4}x_2}\right)^2}\frac{\lambda }{\sqrt{\mu^2 + 2 \delta \sigma^2}}\left[(\theta_3+\theta_4)\left(-\mathrm{e}^{\theta_{3}x_2}-\theta_{3}x_2\mathrm{e}^{\theta_{3}x_2}-\mathrm{e}^{-\theta_{4}x_2}+\theta_{4}x_2\mathrm{e}^{-\theta_{4}x_2}\right)\right.\\&\left.+2\left(\theta_{3} \mathrm{e}^{\theta_{3}x_2}+\theta_{4}\mathrm{e}^{-\theta_{4}x_2}\right)-\frac{\sqrt{\mu^2 + 2 \delta \sigma^2}}{\lambda }(\theta_3+\theta_4)\left(\theta_{3} \mathrm{e}^{\theta_{3}x_2}+\theta_{4}\mathrm{e}^{-\theta_{4}x_2}\right)\right]\ge -1,\end{aligned}
\end{equation}
where $x_2>0$ is the solution to \eqref{eq2:x1x23}.Therefore, \eqref{vprime0} is equivalent to $c\le -\frac{\sqrt{\mu^2 + 2 \delta \sigma^2}}{\lambda H(x_2) }\left(\theta_{3} \mathrm{e}^{\theta_{3}x_2}+\theta_{4}\mathrm{e}^{-\theta_{4}x_2}\right)^2$, where $x_2>0$ is the solution to \eqref{eq2:x1x23}. The remaining proofs are similar to Remark \ref{differ} and Proposition \ref{equilver}.
\end{proof}

Suppose $0<c\le -\frac{\sqrt{\mu^2 + 2 \delta \sigma^2}}{\lambda H(x_2) }\left(\theta_{3} \mathrm{e}^{\theta_{3}x_2}+\theta_{4}\mathrm{e}^{-\theta_{4}x_2}\right)^2$, where $x_2>0$ is the solution to \eqref{eq2:x1x23}. Proposition \ref{cdeg2} shows  the equilibrium regular-singular control law is to pay the dividend once the surplus exceeds the dividend threshold $x_2$ and not to apply effort even if the firm is at the ruin time since $c$ is relatively small and the cost for dividend is relatively large.

\begin{remark}
If we set $x_1=x_2=0$, then $V_3(x)=V_4(x)=-cx, \forall x\ge0$. It is easy to show that \eqref{fv2} in this case is not a solution to the extended HJB system \eqref{extendedhjb} under the discount function \eqref{eq:pseudo_exp_discount}. 
\end{remark}

\begin{remark}
Let 
\begin{equation}\nonumber
\hat \Pi_a(x,t) := \left\{\begin{aligned}
& M,\quad  0 \le x < x_1, t\ge0,\\
& 0, \qquad x_1\leq x \leq x_2, t\ge 0,\\
& a, \qquad x > x_2, t\ge 0,
\end{aligned}\right.
\end{equation}
and
\begin{equation}
\begin{aligned}
&W^{\hat{\Xi}}:=\left\{(x,t)\in\mathcal{\tilde{Q}}|0\leq x< x_2\right\},\\
&P^{\hat{\Xi}}:=\left\{(x,t)\in\mathcal{\tilde{Q}}|x\ge x_2\right\}.
\end{aligned}\nonumber
\end{equation}
Suppose that there exist $0<x_1<x_2$ such that $V$ in \eqref{fv2}, \eqref{V3} and \eqref{V4} satisfies $V^\prime(x_1) = -1$
 ($\Leftrightarrow V^{\prime\prime}(x_1-)=V^{\prime\prime}(x_1+)$) and $V^{\prime\prime}(x_2-) = b\in[0,\frac{2a(1-c)}{\sigma^2}]$.
 Although we have
 \begin{equation}
\begin{aligned}V\in C^{1,1}\left(\mathcal{\tilde{Q}}\right)\bigcup C^{2,1}\left(\left\{(x,t)\in\mathcal{\tilde{Q}}\mid x\in[0,x_2)\cap(x_2,+\infty)\right\}\right),\end{aligned}\nonumber
\end{equation}
the Mild Verification Theorem \ref{mildverificationthm} still holds if we use $V^{\prime\prime}(x_2-)$ when $x=x_2$. Exactly similar to the above discussion, we can show that, for any $a\in[0,M]$, $\left(\hat\Pi_a,\hat\Xi\right)$ is a mild equilibrium regular-singular control law and $V$ in \eqref{fv2}, \eqref{V3} and \eqref{V4} is the corresponding mild equilibrium value function for the objective \eqref{objexmple} under the pseudo-exponential discount function \eqref{eq:pseudo_exp_discount}. All the above-mentioned discussions can be directly transferred here. Due to the limitation of space, we will not elaborate on them any further.
\end{remark}

\section{Numerical analysis}\label{Numerical}
In this section, we conduct a numerical analysis for the effort and dividend problem \eqref{objexmple} in the case of the pseudo-exponential discount function \eqref{eq:pseudo_exp_discount} since we have not obtained a result of the existence of the solution $0<x_1<x_2$ to \eqref{eq2:x1x2} in theory. For the case of the discount function \eqref{eq:mix_exp_discount}, readers can refer to the numerical section of \citet{hu2025equilibrium} regarding the case of two regular controls. Our results are similar, so we will not elaborate on them here.

\begin{figure}
	\centering
	{\includegraphics[width=0.8\linewidth]{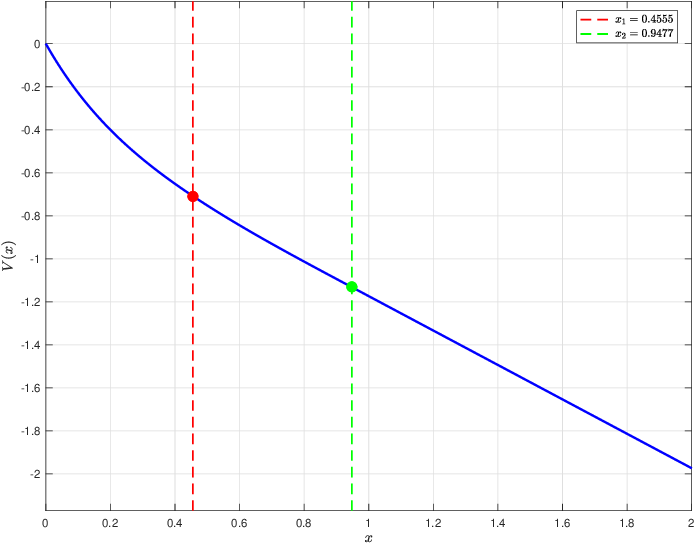}}
	\caption{The equilibrium value function $V$ for the baseline parameters.}\label{f1}
\end{figure}

Referring to \citet{hu2025equilibrium} and \citet{ZHAO20141}, we set $\mu=1$, $\sigma=1$, $M=1$, $c=0.8$, $\delta=0.8$, and $\lambda=0.1$ as the baseline parameters in our numerical analysis. For the baseline parameters, we can obtain that $x_1=0.4555$ and $x_2=0.9477$, which implies that the equilibrium regular-singular control law is to apply the maximal level of effort when the surplus is below the effort threshold $x_1=0.4555$ and to pay the dividend once the surplus exceeds the dividend threshold $x_2=0.9477$. We show the corresponding equilibrium value function in Fig.~\ref{f1}. Note that the equilibrium value function is decreasing and convex in the surplus level $x$. Besides, the equilibrium value function is piecewise-defined over three regions and linear when $x\ge x_2$.

In the following, we study the impacts of exogenous parameters on the effort threshold $x_1$, dividend thresholds $x_2$, and the corresponding equilibrium value function.

\begin{figure}
	\centering
	{\includegraphics[width=1\linewidth]{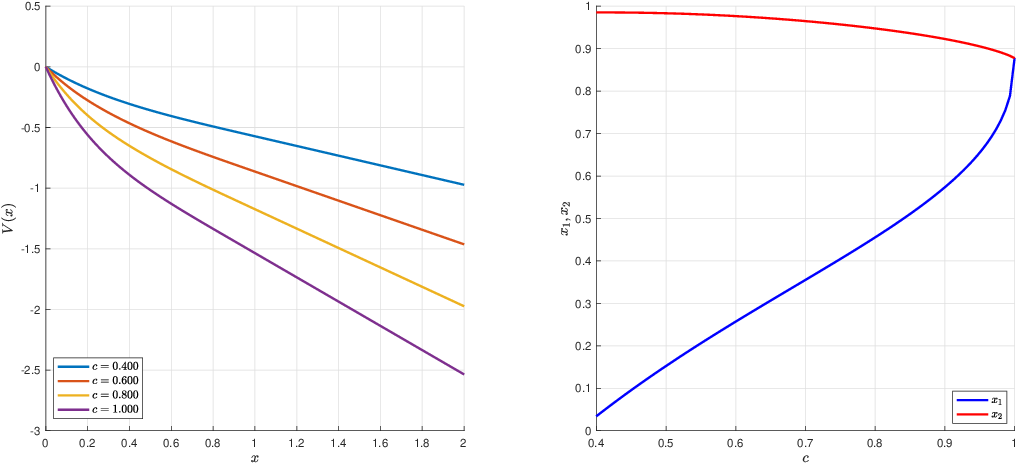}}
	\caption{The impacts of $c$ on the equilibrium value function $V$ (left panel) and thresholds $x_1, x_2$ (right panel).}\label{f2}
\end{figure}

In Fig.~\ref{f2}, we analyze the effect of the proportional parameter $c$ on the equilibrium value function $V$ and thresholds $x_1, x_2$ given that the other parameters are set to their baseline values. The left panel of Fig.~\ref{f2} illustrates the equilibrium value function with four representative curves corresponding to different values of the parameter $c$. The right panel of Fig.~\ref{f2} shows that how thresholds $x_1, x_2$ are varying with respect to $c$. As $c$ increases, the cost for dividend decreases, as a result, the effort threshold $x_1$ increases while the dividend thresholds $x_2$ decreases. Accordingly, the equilibrium value function decreases. Besides, when $c=1$, which means that there is no cost for dividend, the effort threshold $x_1$ is equal to the dividend thresholds $x_2$, and hence, the equilibrium regular-singular control law is either applying the maximal level of effort when the surplus is less than the threshold or paying dividends once the surplus is greater than the threshold.  

\begin{figure}
	\centering
	{\includegraphics[width=1\linewidth]{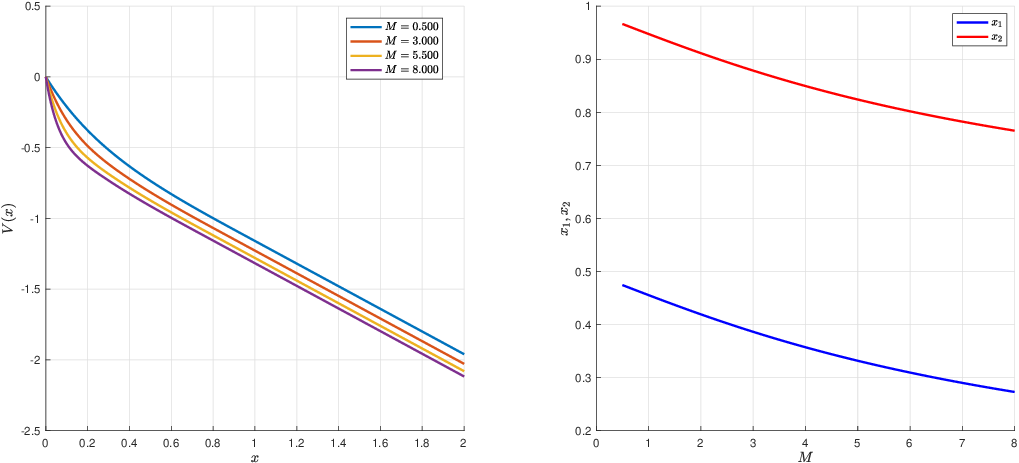}}
	\caption{The impacts of $M$ on the equilibrium value function $V$ (left panel) and thresholds $x_1, x_2$ (right panel).}\label{f3}
\end{figure}

In Fig.~\ref{f3}, we examine the influence of the maximal effort level $M$ on the equilibrium value function $V$ and thresholds $x_1, x_2$ given that the other parameters are set to their baseline values. The left panel of Fig.~\ref{f3} shows the equilibrium value function with four representative curves corresponding to different values of the parameter $M$. The right panel of Fig.~\ref{f3} analyzes that how thresholds $x_1, x_2$ are varying with respect to $M$. In particular, the effort threshold $x_1$ and the dividend thresholds $x_2$ both decrease in the maximal effort level $M$ and the equilibrium value function decreases in $M$. 

\begin{figure}
	\centering
	{\includegraphics[width=1\linewidth]{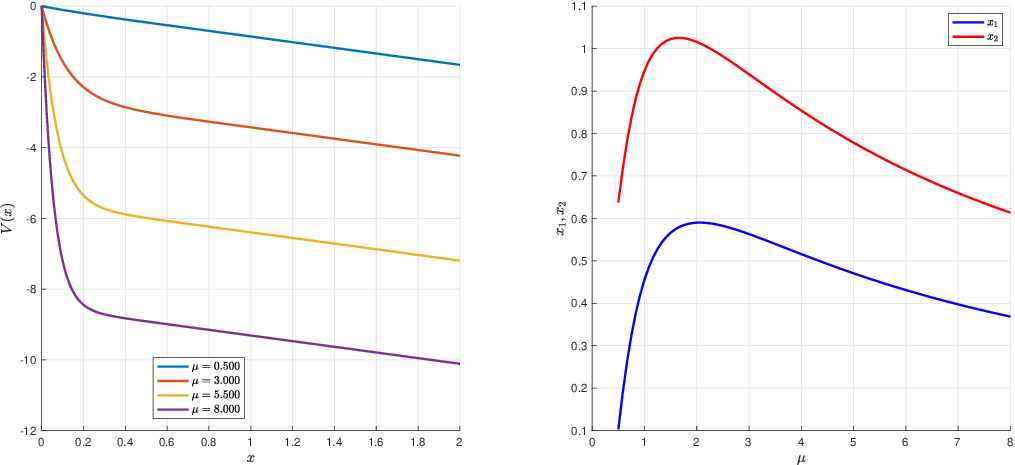}}
	\caption{The impacts of $\mu$ on the equilibrium value function $V$ (left panel) and thresholds $x_1, x_2$ (right panel).}\label{f4}
\end{figure}
 
In Fig.~\ref{f4}, we study the impact of the drift rate $\mu$ on the equilibrium value function $V$ and thresholds $x_1, x_2$ given that the other parameters are set to their baseline values. The left panel of Fig.~\ref{f4} studies the equilibrium value function with four representative curves corresponding to different values of the parameter $\mu$. The right panel of Fig.~\ref{f4} examines that how thresholds $x_1, x_2$ are varying with respect to $\mu$. As $\mu$ increases, the baseline expected growth rate of the surplus increases, resulting in a smaller equilibrium value function. The effort and dividend thresholds $x_1, x_2$ are first increasing then decreasing in $\mu$.

\begin{figure}
	\centering
	{\includegraphics[width=1\linewidth]{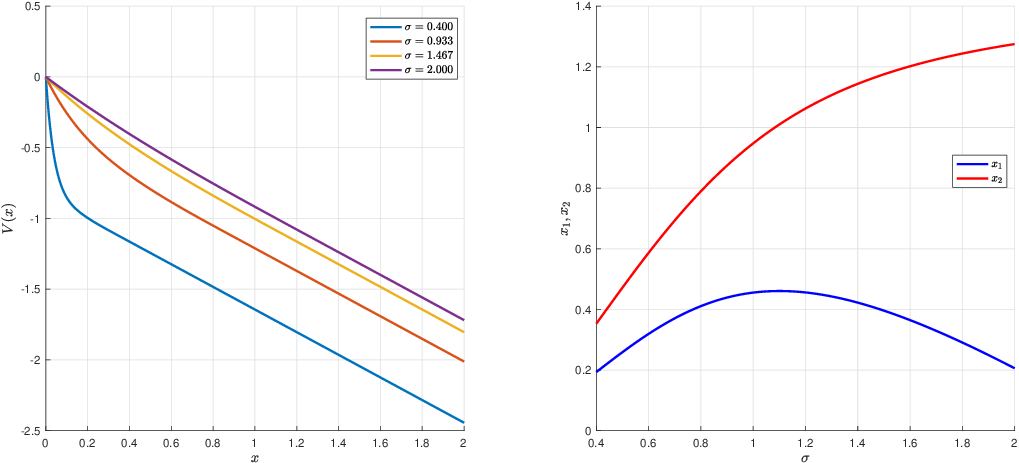}}
	\caption{The impacts of $\sigma$ on the equilibrium value function $V$ (left panel) and thresholds $x_1, x_2$ (right panel).}\label{f5}
\end{figure}

 In Fig.~\ref{f5}, we analyze the effect of the diffusion $\sigma$ on the equilibrium value function $V$ and thresholds $x_1, x_2$ given that the other parameters are set to their baseline values. The left panel of Fig.~\ref{f5} illustrates the equilibrium value function with four representative curves corresponding to different values of the parameter $\sigma$. The right panel of Fig.~\ref{f5} shows that how thresholds $x_1, x_2$ are varying with respect to $\sigma$. As $\sigma$ increases, the volatility of the surplus increases, leading to a larger equilibrium value function. The effort threshold $x_1$ is first increasing then decreasing in $\sigma$ and the dividend thresholds $x_2$ is increasing in $\sigma$. Besides, the difference between two thresholds getting larger in $\sigma$ as well.

\begin{figure}
	\centering
	{\includegraphics[width=1\linewidth]{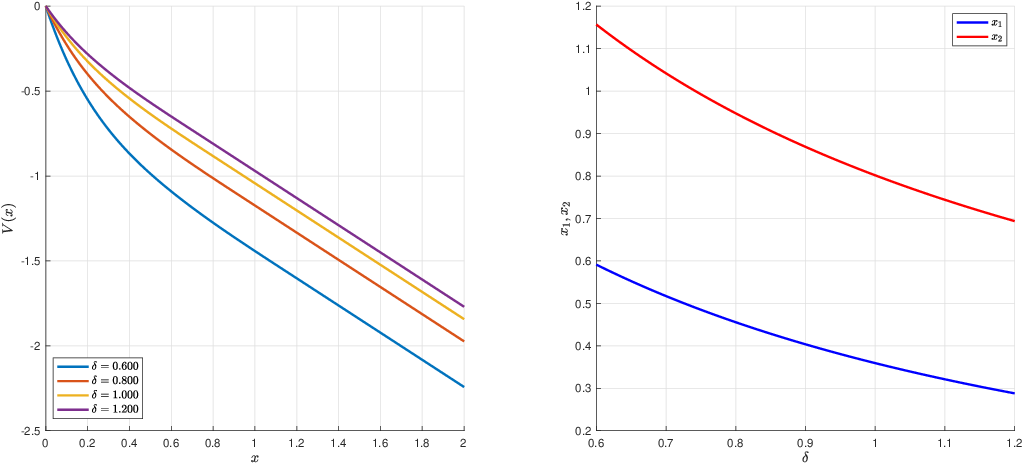}}
	\caption{The impacts of $\delta$ on the equilibrium value function $V$ (left panel) and thresholds $x_1, x_2$ (right panel).}\label{f6}
\end{figure}

In Fig.~\ref{f6}, we examine the influence of the discount rate $\delta$ on the equilibrium value function $V$ and thresholds $x_1, x_2$ given that the other parameters are set to their baseline values. The left panel of Fig.~\ref{f6} shows the equilibrium value function with four representative curves corresponding to different values of the parameter $\delta$. The right panel of Fig.~\ref{f6} analyzes that how thresholds $x_1, x_2$ are varying with respect to $\delta$. We find that the effort threshold $x_1$ and the dividend thresholds $x_2$ both decrease with the discount rate $\delta$ and the equilibrium value function increases with $\delta$.  This is because when $\delta$ increases, the discount function decreases, the equilibrium value function increases consequently. Meanwhile, the larger $\delta$, the more the discount is applied to future cashflows, and the thresholds for both effort and dividend are reduced to raise the ruin probability to seek for more dividends at present.

\begin{figure}
	\centering
	{\includegraphics[width=1\linewidth]{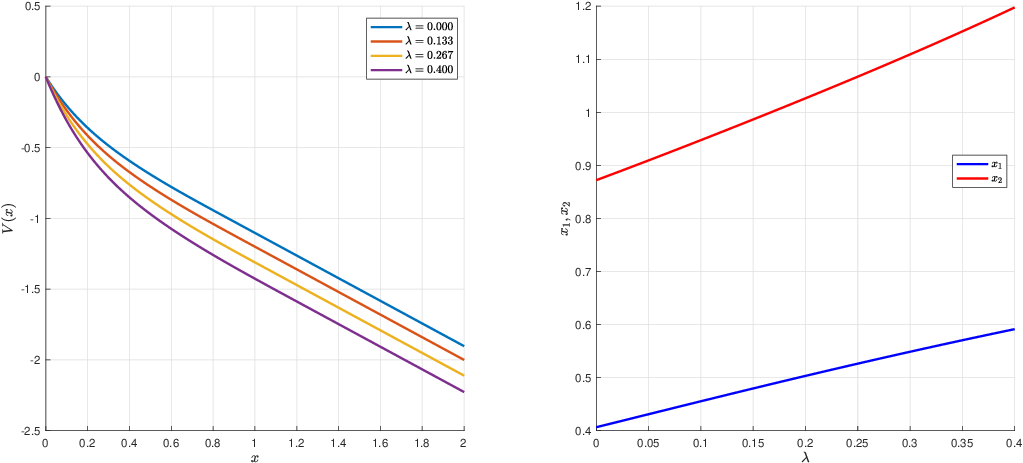}}
	\caption{The impacts of $\lambda$ on the equilibrium value function $V$ (left panel) and thresholds $x_1, x_2$ (right panel).}\label{f7}
\end{figure}

In Fig.~\ref{f7}, we study the impact of the parameter $\lambda$ on the equilibrium value function $V$ and thresholds $x_1, x_2$ given that the other parameters are set to their baseline values. The left panel of Fig.~\ref{f7} studies the equilibrium value function with four representative curves corresponding to different values of the parameter $\lambda$. The right panel of Fig.~\ref{f7} examines that how thresholds $x_1, x_2$ are varying with respect to $\lambda$. As $\lambda$ increases, the discount function increases, resulting in a smaller equilibrium value function. The effort and dividend thresholds $x_1, x_2$ are increasing in $\lambda$. The reason is that the larger $\lambda$, the less the discount is applied to future cashflows, and hence, the thresholds for both effort and dividend are raised to reduce the ruin probability to seek for more potential dividends in the future. Specially, the case with $\lambda = 0$ is time consistent counterpart.

\section{Conclusions}
In this paper, we investigate a time-inconsistent mixed regular-singular control problem in continuous time under non-exponential discounting and propose a comprehensive equilibrium framework to address the inherent dynamic inconsistency. At the theoretical level, we seek time-consistent equilibria in an intrapersonal game setting and introduce a novel definition of equilibrium regular-singular control law tailored to this class of problems for which the perturbations of regular and singular control are simultaneous, and establish a verification theorem that provides a sufficient condition for the equilibrium. An extended HJB system is derived with verification theorem. In addition, for the case where perturbations are applied separately to the regular and singular control, we propose the concept of mild equilibrium regular-singular control law, together with its corresponding mild verification theorem. These theoretical results extend the existing literature on time-inconsistent control by accommodating both regular and singular components within a unified framework. At the applied level, we demonstrate the practical relevance of our theory through the effort and dividend problem. By solving the extended HJB system, we derive an explicit equilibrium characterized by the effort and dividend thresholds under two representative discount structures: a mixture of exponential discount functions and a pseudo-exponential discount function. The convexity of the value function is rigorously established, and the equilibrium conditions are confirmed. Furthermore, we demonstrate the existence of the effort and dividend thresholds under a mixture of exponential discount functions. Under mild conditions, the equilibrium policy prescribes exerting the maximal level of effort when the surplus falls below the effort threshold and paying dividends once the surplus exceeds the dividend threshold when the cost for dividend is relatively small; otherwise, when the cost for dividend is relatively large, the equilibrium policy is to pay the dividend once the surplus exceeds the dividend threshold and not to apply effort even if the firm is at the ruin time. The closed-form solutions validate the feasibility of our theoretical framework and provides economically intuitive insights for dynamic corporate finance decisions involving effort allocation and shareholder payouts. Numerical analysis further quantifies the effects of key exogenous parameters on the thresholds and the equilibrium value function under the pseudo-exponential discount function, thereby shedding light on their economic implications.

  %Despite these contributions, several limitations warrant future investigation. Our current framework assumes a single-state diffusion process and does not consider jump risks or strategic interactions among multiple agents. Promising directions for future research include extending the model to multi-dimensional state variables, incorporating regime-switching or stochastic volatility, and exploring game-theoretic settings where multiple decision-makers with conflicting objectives interact. Nonetheless, we believe that the equilibrium methodology developed in this paper provides a solid foundation for addressing a broad class of time-inconsistent stochastic control problems in economics and finance.

\paragraph{Acknowledgements.} The work is supported by the National Natural Science Foundation of China (12271290, 11871036). The authors thank the members of the group of Mathematical Finance and Actuarial Science at the Department of Mathematical Sciences, Tsinghua University, for their helpful feedback and conversations.

\paragraph{\bf Declaration of generative AI and AI-assisted technologies in the writing process.} During the preparation of this work, the authors used ChatGPT (OpenAI) in order to enhance the clarity and readability of the manuscript. After using this tool, the authors reviewed and edited the content as needed and take full responsibility for the content of the published article. 

\bibliographystyle{abbrvnat}
\bibliography{reference}
\end{document}